\newcommand*{\MRref}[2]{ \href{http://www.ams.org/mathscinet-getitem?mr=#1}{MR \textbf{#1}}}
\newcommand*{\arxiv}[1]{\href{http://www.arxiv.org/abs/#1}{arXiv: #1}}
\DeclareMathOperator{\Hom}{Hom}
\DeclareMathOperator{\Prim}{Prim}
\DeclareMathOperator{\Ext}{Ext}
\DeclareMathOperator{\PExt}{PExt}
\DeclareMathOperator{\Tor}{Tor}
\DeclareMathOperator{\Id}{Id}
\newcommand*{\KK}{\textup{KK}}
\newcommand*{\ima}{\textup i}
\newcommand*{\RE}{\mathcal R\textup{E}}
\newcommand*{\K}{\textup K}
\newcommand*{\Sphere}{\mathbb S}
\newcommand*{\Kcoef}{\underline\K}
\newcommand*{\Mat}{\mathbb M}
\newcommand*{\cp}{\textup{cp}}
\newcommand*{\bo}{\textup b}
\newcommand*{\Good}{\textup{Good}}
\newcommand*{\Ecat}{\mathfrak E}
\newcommand*{\Cuntza}{\mathcal O}
\newcommand*{\into}{\rightarrowtail}
\newcommand*{\prto}{\twoheadrightarrow}
\newcommand*{\cl}[1]{\overline{#1}}
\newcommand*{\primid}{\mathfrak p}
\newcommand*{\cov}{\mathcal U}
\newcommand*{\Primep}{\mathcal P}
\newcommand*{\Cont}{\textup C}
\newcommand*{\Mult}{\mathcal M}
\newcommand*{\Cstarcat}{\mathfrak{C^*alg}}
\newcommand*{\Cstarsep}{\mathfrak{C^*sep}}
\newcommand*{\KKcat}{\mathfrak{KK}}
\newcommand*{\id}{\textup{id}}
\newcommand*{\norm}[1]{\lVert#1\rVert}
\newcommand*{\Asm}[2][X]{\textup{Asymp}(#2)_{#1}}
\newcommand*{\asm}[2][X]{[\![#2]\!]_{#1}}
\newcommand*{\III}{\mathbb{I}}
\newcommand*{\E}{\textup E}
\newcommand*{\Comp}{\mathbb K}
\newcommand*{\C}{\mathbb C}
\newcommand*{\R}{\mathbb R}
\newcommand*{\Z}{\mathbb Z}
\newcommand*{\N}{\mathbb N}
\newcommand*{\Ideals}{\mathbb I}
\newcommand*{\Open}{\mathbb O}
\newcommand*{\nb}{\nobreakdash}
\newcommand*{\Bootstrap}{\mathcal B}
\newcommand*{\BootstrapE}{\mathcal B_\E}
\newcommand*{\Cst}{\textup C^*}
\newcommand*{\Star}{\texorpdfstring{$^*$\nb-}{*-}}
\newcommand*{\blank}{\text\textvisiblespace}
\newcommand*{\defeq}{\mathrel{\vcentcolon=}}
\numberwithin{equation}{section}
\theoremstyle{plain}
\newtheorem{theorem}[equation]{Theorem}
\newtheorem{proposition}[equation]{Proposition}
\newtheorem{lemma}[equation]{Lemma}
\newtheorem{corollary}[equation]{Corollary}
\theoremstyle{definition}
\newtheorem{definition}[equation]{Definition}
\theoremstyle{remark}
\newtheorem{remark}[equation]{Remark}
\newtheorem{example}[equation]{Example}
\begin{document}
\title[E-theory for $\Cst$\nb-algebras over topological spaces]{E-theory for $\Cst$\nb-algebras\\ over topological spaces}
\author{Marius Dadarlat}
\address{Department of Mathematics\\
  Purdue University\\
  150 N.~University Street\\
  West Lafayette, IN 47907-2067\\
  USA}
\email{mdd@math.purdue.edu}

\author{Ralf Meyer}
\address{Mathematisches Institut and Courant Research Centre ``Higher Order Structures''\\
  Georg-August Universit\"at G\"ottingen\\
  Bunsenstra{\ss}e 3--5\\
  37073 G\"ottingen\\
  Germany}
\email{meyerr@member.ams.org}
\begin{abstract}
  We define \(\E\)\nb-theory for separable \(\Cst\)\nb-algebras over second countable topological spaces and establish its basic properties.  This includes an approximation theorem that relates the \(\E\)\nb-theory over a general space to the \(\E\)\nb-theories over finite approximations to this space.  We obtain effective criteria for determining
  the invertibility of  \(\E\)\nb-theory elements over possibly infinite-dimensional spaces.
   Furthermore, we prove a Universal Multicoefficient Theorem for \(\Cst\)\nb-algebras over totally disconnected metrisable compact spaces.
\end{abstract}
\subjclass[2000]{19K35, 46L35, 46L80, 46M20}
\thanks{The first author was partially supported by NSF grant \#DMS--0801173. The second author was supported by the German Research Foundation (Deutsche Forschungsgemeinschaft (DFG)) through the Institutional Strategy of the University of G\"ottingen.}
\maketitle

\section{Introduction}
\label{sec:intro}

Eberhard Kirchberg~\cite{Kirchberg:Michael} proved a far-reaching classification theorem for non-simple, strongly purely infinite, stable, nuclear, separable \(\Cst\)\nb-algebras.  Roughly speaking, two such \(\Cst\)\nb-algebras are isomorphic once they have homeomorphic primitive ideal spaces --~call this space~\(X\)~-- and are \(\KK(X)\)-equivalent in a suitable bivariant \(\K\)\nb-theory for \(\Cst\)\nb-algebras over~\(X\).  To apply this classification theorem, we need tools to compute this bivariant \(\K\)\nb-theory.  Following Mikael R{\o}rdam~\cite{Rordam:Classification_extensions} and Alexander Bonkat~\cite{Bonkat:Thesis}, who dealt with the simplest non-trivial case, the non-Hausdorff space with two points, Universal Coefficient Theorems for \(\KK(X)\) have now been established over several finite spaces~\(X\) in~\cites{Restorff:Classification, Restorff:Thesis, Eilers-Restorff:Rordam_gen, Meyer-Nest:Filtrated_K}.  Here we concentrate on the special issues for infinite~\(X\).

Recall that Kasparov theory only satisfies excision for \(\Cst\)\nb-algebra extensions with a completely positive section.  Similar technical restrictions appear for all variants of Kasparov theory, including Kirchberg's.  This is a severe limitation.  For instance, excision does not hold in general for extensions of the form \(A(U) \into A \prto A/A(U)\) for an open subset~\(U\), where \(A(U)\) denotes the restriction of~\(A\) to~\(U\), extended by~\(0\) to a \(\Cst\)\nb-algebra over the original space, even if~\(A\) is nuclear.  In the non-equivariant case, such technical problems are resolved by passing to \(\E\)\nb-theory, which satisfies excision for all \(\Cst\)\nb-algebra extensions (see~\cite{Connes-Higson:Deformations}).  Here we define an analogue of \(\E\)\nb-theory for separable \(\Cst\)\nb-algebras over a second countable topological space~\(X\).  We establish that our new theory has the expected properties, including a universal property and exactness for all extensions of \(\Cst\)\nb-algebras over~\(X\).  If~\(X\) is a locally compact Hausdorff space, then our definitions agree with previous ones by Efton Park and Jody Trout in~\cite{Park-Trout:Representable_E} and by Radu Popescu in~\cite{Popescu:Equivariant_E}.  We also formulate sufficient criteria for the natural map \(\E_*(X;A,B)\to \KK_*(X;A,B)\) to be invertible.  For instance, this works if~\(X\) is locally compact and Hausdorff and~\(A\) is a continuous field of nuclear \(\Cst\)\nb-algebras over~\(X\).

Our definition of \(\E_*(X;A,B)\) is based on asymptotic homomorphisms satisfying an approximate equivariance condition.  An asymptotic homomorphism \(\varphi_t\colon A\to B\),
\(t\in [0,\infty)\), is called \emph{approximately \(X\)\nb-equivariant} if for each open subset \(U\subseteq X\), we have
\[
\lim_{t\to \infty} {}\norm{\varphi_t(a)}_{X\setminus U} =0
\qquad\text{for all \(a\in A(U)\),}
\]
where \(\norm{\varphi_t(a)}_{X\setminus U}\) denotes the norm of~\(\varphi_t(a)\) in the quotient \(B(X\setminus U) \defeq B/B(U)\) of~\(B\).

Let \(\cov=(U_n)_{n\in\N}\) be a countable basis for the topology of~\(X\).  For each \(n\in\N\), the open subsets \(U_1,\dotsc,U_n\) generate a finite topology~\(\tau_n\) on~\(X\).  Let~\(X_n\) be the \(\textup T_0\)\nb-quotient of \((X,\tau_n)\), this is a finite \(\textup T_0\)\nb-space.  The quotient map \(X\prto X_n\) allows us to view \(\Cst\)\nb-algebras over~\(X\) as \(\Cst\)\nb-algebras over~\(X_n\) for all \(n\in\N\).  Our first main result is a short exact sequence
\begin{equation}
  \label{eq:intro_approximation_sequence}
  \mathop{\varprojlim_{n\in\N}}\nolimits^1 \E_{*+1}(X_n;A,B) \into \E_*(X;A,B) \prto \varprojlim_{n\in\N} \E_*(X_n;A,B)
\end{equation}
for all separable \(\Cst\)\nb-algebras \(A\) and~\(B\) over~\(X\).  This is made plausible by the observation that an asymptotic homomorphism \(A\to B\) is approximately \(X\)\nb-equivariant if and only if it is approximately \(X_n\)\nb-equivariant for all \(n\in\N\).  Hence the space of approximately \(X\)\nb-equivariant asymptotic homomorphisms is the intersection of the spaces of approximately \(X_n\)\nb-equivariant asymptotic homomorphisms for \(n\in\N\).  Since there are, in general, technical problems with computing homotopy groups of intersections, we use a mapping telescope to establish the long exact sequence~\eqref{eq:intro_approximation_sequence}.

As an important application of~\eqref{eq:intro_approximation_sequence}, we give an effective criterion for invertibility of $\E$-theory elements: an element in \(\E_*(X;A,B)\) is invertible if and only if its image in \(\E_*\bigl(A(U),B(U)\bigr)\) is invertible for all \(U\in\Open(X)\).  As a consequence, if all two-sided closed ideals of a separable nuclear \(\Cst\)\nb-algebra~\(A\) with Hausdorff primitive spectrum~$X$ are \(\KK\)-contractible, then
\[
A\otimes \Cuntza_\infty \otimes \Comp\cong \Cont_0(X)\otimes \Cuntza_2 \otimes \Comp.
\]
This result solves the problem of characterising the trivial continuous fields with fibre $\Cuntza_2 \otimes \Comp$ within the class of strongly purely infinite, stable, continuous fields of \(\Cst\)\nb-algebras.  It is worth noting that in general the \(\KK\)-contractibility of ideals does not follow from the \(\KK\)-contractibility of the fibres.  Indeed, there are examples of separable nuclear continuous fields~$A$ over the Hilbert cube with all fibres isomorphic to~$\Cuntza_2$ and yet such that $\K_0(A)\neq 0$, see~\cite{Dadarlat:Fiberwise_KK}.

While~\eqref{eq:intro_approximation_sequence}, in principle, reduces the computation of \(\E_*(X;A,B)\) for infinite spaces~\(X\) to the corresponding problem for the finite approximations~\(X_n\), this does not yet lead to a Universal Coefficient Theorem.  If \(\E_*(X_n;A,B)\) is computable by Universal Coefficient Theorems for all \(n\in\N\), the latter will usually involve short exact sequences.  Thus we have to combine two short exact sequences, as in the computation of the \(\K\)\nb-theory for crossed products by~\(\Z^2\) using the Pimsner--Voiculescu exact sequence twice.  This can only be carried through if we have some extra information.  In terms of the general homological machinery  developed in~\cite{Meyer-Nest:Homology_in_KK}, we find that the homological dimension of \(\E\)\nb-theory over an infinite space~\(X\) may be one larger than the homological dimensions of the finite approximations~\(X_n\).  Thus it is usually~\(2\), which does not suffice for classification theorems.

In fact, it is well-known that filtrated \(\K\)\nb-theory cannot be a complete invariant for \(\Cst\)\nb-algebras over the one-point compactification of~\(\N\).  Here we observe that the counterexample in~\cite{Dadarlat-Eilers:Bockstein} may be transported easily to any compact Hausdorff space.

The good excision properties of \(\E\)\nb-theory are particularly useful to study the \(\E\)\nb-theoretic analogue of the bootstrap class.  For a finite space~\(X\), the bootstrap class for \(\KK(X)\) is studied in~\cite{Meyer-Nest:Bootstrap}.  When we replace \(\KK(X)\) by \(\E(X)\), the technical assumptions in~\cite{Meyer-Nest:Bootstrap} about completely positive sections disappear, so that a \(\Cst\)\nb-algebra~\(A\) over a finite space~\(X\) belongs to the \(\E\)\nb-theoretic bootstrap class over~\(X\) if and only if all the distinguished ideals \(A(U)\) for open subsets \(U\subseteq X\) belong to the usual non-equivariant \(\E\)\nb-theoretic bootstrap class.  As we shall see, the latter criterion becomes a useful \emph{definition} of the bootstrap class over an infinite space~\(X\).  In $\KK(X)$, this condition would not yet be sufficient for a reasonable definition of the bootstrap class.

If~\(X\) is the Cantor set or, more generally, a totally disconnected metrisable compact space, then we may resolve the counterexamples mentioned above by taking into account coefficients.  Our second main result is a Universal \emph{Multi}coefficient Theorem for \(\E_*(X;A,B)\) for  two \(\Cst\)\nb-algebras \(A\) and~\(B\) over~\(X\).  It assumes that \(A(U)\) belongs to the \(\E\)\nb-theoretic bootstrap class for all open subsets \(U\subseteq X\) and yields a natural exact sequence
\[
\Ext_{\Cont(X,\Lambda)}\bigl(\Kcoef(A)[1],\Kcoef(B)\bigr)
\into \E(X;A,B) \prto
\Hom_{\Cont(X,\Lambda)}\bigl(\Kcoef(A),\Kcoef(B)\bigr),
\]
where~\(\Kcoef\) denotes the \(\K\)\nb-theory of~\(A\) with coefficients, viewed as a countable module over the \(\Z/2\)\nb-graded ring \(\Cont(X,\Lambda)\) of locally constant functions from~\(X\) to the \(\Z/2\)\nb-graded ring~\(\Lambda\) of B\"ockstein operations (see~\cite{Dadarlat-Loring:Multicoefficient}).  As a consequence, two \(\Cst\)\nb-algebras \(A\) and~\(B\) in the \(\E\)\nb-theoretic bootstrap class over~\(X\) are \(\E(X)\)-equivalent if and only if \(\Kcoef(A)\) and \(\Kcoef(B)\) are isomorphic as \(\Cont(X,\Lambda)\)-modules.

\section{\texorpdfstring{$\E$}{E}-theory for \texorpdfstring{$\Cst$}{C*}-algebras over non-Hausdorff spaces}
\label{sec:E-theory}

We recall some definitions from~\cite{Meyer-Nest:Bootstrap} regarding \(\Cst\)\nb-algebras over possibly non-Hausdorff topological spaces and then introduce equivariant \(\E\)\nb-theory for them.  Following the approach of Alain Connes and Nigel Higson in~\cite{Connes-Higson:Deformations}, we first describe \(\E\)\nb-theory concretely using asymptotic morphisms, then abstractly using a universal property.  For a locally compact Hausdorff space~\(X\), our definition is equivalent to previous ones for \(\Cont_0(X)\)-algebras by Efton Park and Jody Trout in~\cite{Park-Trout:Representable_E} and by Radu Popescu in~\cite{Popescu:Equivariant_E}.

\subsection{\texorpdfstring{$\Cst$}{C*}-algebras over non-Hausdorff spaces}
\label{sec:Cstar_over_X}

Here we recall some basic definitions from~\cite{Meyer-Nest:Bootstrap}.

For a \(\Cst\)\nb-algebra~\(A\), let \(\Prim(A)\) denote its primitive ideal space, equipped with the hull--kernel topology, and let \(\Ideals(A)\) be the set of ideals in~\(A\), partially ordered by inclusion.  For a topological space~\(X\), let \(\Open(X)\) be the set of open subsets of~\(X\), partially ordered by inclusion.  Both \(\Ideals(A)\) and \(\Open(X)\) are complete lattices, that is, any subset has both an infimum and a supremum.  It is shown in \cite{Dixmier:Cstar-algebres}*{\S3.2} that there is a canonical lattice isomorphism
\begin{equation}
  \label{eq:Prim_open_Ideal}
  \Open\bigl(\Prim(A)\bigr) \cong \Ideals(A),
  \qquad
  U \mapsto \bigcap \{\primid: \primid\in\Prim(A)\setminus U\}.
\end{equation}

\begin{definition}
  \label{def:Cstar_over_X}
  Let~\(X\) be a topological space.

  A \emph{\(\Cst\)\nb-algebra over~\(X\)} is a \(\Cst\)\nb-algebra~\(A\) with a continuous map~\(\psi\) from \(\Prim(A)\) to~\(X\).

  For an open subset~\(U\) of~\(X\), we let \(A(U)\in\Ideals(A)\) be the ideal that corresponds to \(\psi^{-1}(U)\in \Open(\Prim A)\) under the isomorphism~\eqref{eq:Prim_open_Ideal}.

  For a closed subset~\(S\) of~\(X\), we let \(A(S)\defeq A/ A(X\setminus S)\).  For \(a\in A\), we write~\(\norm{a}_S\) for the norm of the image of~\(a\) in the quotient \(\Cst\)\nb-algebra~\(A(S)\).

  More generally, if \(S\subseteq X\) is locally closed, that is, \(S=U\setminus V\) with open subsets \(V\subseteq U\subseteq X\), then we let \(A(S)\defeq A(U)/ A(V)\).  This quotient is independent of the choice of the open sets \(U\) and~\(V\) with \(S=U\setminus V\).

  Let \(A\) and~\(B\) be \(\Cst\)\nb-algebras over~\(X\).  A \Star{}homomorphism \(f\colon A\to B\) is called \emph{\(X\)\nb-equivariant} or a \emph{\Star{}homomorphism over~\(X\)} if~\(f\) maps \(A(U)\) into \(B(U)\) for all open subsets~\(U\) of~\(X\).

  Let \(\Cstarcat(X)\) be the category whose objects are the \(\Cst\)\nb-algebras over~\(X\) and whose morphisms are the \Star{}homomorphisms over~\(X\).  Let \(\Cstarsep(X)\) be the full subcategory of \emph{separable} \(\Cst\)\nb-algebras over~\(X\) with \Star{}homomorphisms over~\(X\) as morphisms.
\end{definition}

We usually drop the map \(\Prim(A)\to X\) from our notation and simply call~\(A\) a \(\Cst\)\nb-algebra over~\(X\).

Although the above definition involves~\(X\), all that really matters is the lattice \(\Open(X)\).  It is explained in~\cite{Meyer-Nest:Bootstrap} that it is essentially no loss of generality to assume~\(X\) to be \emph{sober}.  In that case, we may recover~\(X\) from the lattice \(\Open(X)\) and the map \(\Prim(A)\to X\) from the map \(\Open(X)\to\Ideals(A)\), \(U\mapsto A(U)\) (see \cite{Meyer-Nest:Bootstrap}*{Lemma 2.25}), which may be any map that commutes with finite infima and arbitrary suprema.  Thus if~$X$ is a second countable, sober space, a \(\Cst\)\nb-algebra over~\(X\) is a \(\Cst\)\nb-algebra~\(A\) endowed with an order preserving map \(\Open(X)\to\Ideals(A)\), \(U\mapsto A(U)\), which satisfies the following conditions:
\begin{itemize}
\item[(1)] $A(\emptyset)=0$, $A(X)=A$,
\item[(2)] $A(U_1\cap U_2)=A(U_1)\cdot A(U_2)$,
\item[(3)] $A\bigl(\bigcup_{n=1}^\infty U_n\bigr)=\overline{\sum_{n=1}^\infty A(U_n)}$.
\end{itemize}
If a \(\Cst\)\nb-algebra~\(A\) satisfies the conditions (1) and~(2) and
\begin{itemize}
\item[(3')] $A(U_1\cup U_2)=A(U_1)+A(U_2)$,
\end{itemize}
then we say that $A$ is a \emph{quasi} \(\Cst\)\nb-algebra over~\(X\).  If~$B$ is a \(\Cst\)\nb-algebra over~\(X\) then $\Cont_\bo(T,B)$ and $\Cont_\bo(T,B) \bigm/ \Cont_0(T,B)$ for \(T\defeq [0,\infty)\) become quasi \(\Cst\)\nb-algebras over~\(X\), via the maps $U\mapsto \Cont_\bo(T,B(U))$ and
\[
U\mapsto \Cont_\bo(T,B(U))+\Cont_0(T,B)\bigm/ \Cont_0(T,B).
\]
However, they do not satisfy the condition~(3) above.

Let~\(X\) be a locally compact Hausdorff space and let~\(A\) be a \(\Cst\)\nb-algebra over~\(X\).  The continuous map \(\Prim(A)\to X\) induces a \Star{}homomorphism
\[
\Cont_\bo(X) \to \Cont_\bo\bigl(\Prim(A)\bigr) \cong Z\Mult(A),
\]
where \(Z\Mult(A)\) denotes the centre of the multiplier algebra of~\(A\).  One verifies that $\Cont_0(X)A$ is dense in~$A$, so that~\(A\) becomes a \(\Cont_0(X)\)-\(\Cst\)-algebra.  This yields an isomorphism of categories between \(\Cstarcat(X)\) and the category of \(\Cont_0(X)\)-\(\Cst\)-algebras with \(\Cont_0(X)\)-linear \Star{}homomorphisms as morphisms by \cite{Meyer-Nest:Bootstrap}*{Proposition 2.11}.

\subsection{Approximately equivariant asymptotic morphisms}
\label{sec:equiv_asm}

Recall:

\begin{definition}
  \label{def:asymptotic_morphism}
  An \emph{asymptotic morphism} between two \(\Cst\)\nb-algebras \(A\) and~\(B\) is a map \(\varphi\colon A\to \Cont_\bo(T,B)\) for \(T\defeq[0,\infty)\) that induces a \Star{}homomorphism
  \[
  \dot\varphi\colon A\to B_\infty \defeq \Cont_\bo(T,B) \bigm/ \Cont_0(T,B).
  \]
  The map~\(\varphi\) is equivalent to a family of maps \(\varphi_t\colon A\to B\) for \(t\in T\) such that \(t\mapsto \varphi_t(a)\) is a bounded continuous function from~\(T\) to~\(B\) for each \(a\in A\).  Such a family is an asymptotic morphism if and only if
  \[
  \varphi_t(a^*+\lambda b)-\varphi_t(a)^*-\lambda \varphi_t(b)\quad\text{and}\quad \varphi_t(a\cdot b) - \varphi_t(a)\cdot\varphi_t(b)
  \]
  converge to~\(0\) in the norm topology for \(t\to\infty\) for all \(a,b\in A\), \(\lambda\in\C\).

  Two asymptotic morphisms \(\varphi\) and~\(\varphi'\) are called \emph{equivalent} if \(\dot\varphi=\dot\varphi'\), that is, \(\varphi_t(a)-\varphi'_t(a)\) converges to~\(0\) for \(t\to\infty\) for all \(a\in A\).
\end{definition}

\begin{definition}
  \label{def:equivariant_asymptotic_morphism}
  An asymptotic morphism \(\varphi_t\colon A\to B\) from~\(A\) to~\(B\) is called \emph{approximately \(X\)\nb-equivariant} if, for any open subset \(U\subseteq X\),
  \begin{equation}
    \label{eq:def0_ah_X_equiv}
    \lim_{t\to\infty} {}\norm{\varphi_t(a)}_{X\setminus U} = 0
    \qquad\text{for all \(a\in A(U)\)}.
  \end{equation}

  Let \(\Asm{A,B}\) be the set of approximately \(X\)\nb-equivariant asymptotic morphisms \(A\to B\).
\end{definition}

Our definition of \(\Asm{A,B}\) requires \(X\)\nb-equivariance only in the limit, the individual maps~\(\varphi_t\) need not be \(X\)\nb-equivariant.

\begin{remark}
  \label{rem:approx_equivariant_equivalent}
If~\(\varphi\) is equivalent to an approximately \(X\)\nb-equivariant asymptotic morphism, then~\(\varphi\) itself is approximately \(X\)\nb-equivariant.
\end{remark}

\begin{lemma}
  \label{lem:X-equivariance}
  An asymptotic morphism~\(\varphi\) is approximately \(X\)\nb-equivariant if and only if, for all closed subsets~\(S\) of~\(X\),
  \[
  \limsup_{t\to\infty} {}\norm{\varphi_t(a)}_S \le \norm{a}_S
  \qquad\text{for all \(a\in A\).}
  \]
\end{lemma}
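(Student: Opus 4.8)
The plan is to prove the two implications separately; the converse is essentially immediate, while the forward direction carries the content. For the easy implication, assume the \(\limsup\) estimate holds. Given an open set \(U\subseteq X\) and \(a\in A(U)\), put \(S\defeq X\setminus U\). Since \(a\in A(U)=A(X\setminus S)\), its image in \(A(S)=A/A(X\setminus S)\) vanishes, so \(\norm{a}_S=0\). The hypothesis then gives \(\limsup_{t\to\infty}\norm{\varphi_t(a)}_S\le 0\), and as norms are non-negative this forces \(\lim_{t\to\infty}\norm{\varphi_t(a)}_{X\setminus U}=0\), which is exactly~\eqref{eq:def0_ah_X_equiv}.

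For the forward direction, the idea is to reinterpret both sides of the desired inequality as norms in an asymptotic quotient \(\Cst\)\nb-algebra and then use that \Star{}homomorphisms are contractive. Fix an open set \(U\subseteq X\) and write \(S\defeq X\setminus U\); I must show \(\limsup_{t\to\infty}\norm{\varphi_t(a)}_S\le\norm{a}_S\) for every \(a\in A\). Let \(\pi\colon B\prto B/B(U)=B(X\setminus U)\) be the quotient \Star{}homomorphism, so that \(\norm{\varphi_t(a)}_S=\norm{\pi(\varphi_t(a))}\). Applying~\(\pi\) pointwise carries \(\Cont_\bo(T,B)\) into \(\Cont_\bo\bigl(T,B/B(U)\bigr)\) and \(\Cont_0(T,B)\) into \(\Cont_0\bigl(T,B/B(U)\bigr)\), hence induces a \Star{}homomorphism \(\pi_\infty\colon B_\infty\to\bigl(B/B(U)\bigr)_\infty\). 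Using the standard identification of the norm in an asymptotic algebra, \(\norm{f+\Cont_0(T,B)}=\limsup_{t\to\infty}\norm{f(t)}\), I obtain
\[
\norm{\pi_\infty(\dot\varphi(a))}=\limsup_{t\to\infty}\norm{\pi(\varphi_t(a))}=\limsup_{t\to\infty}\norm{\varphi_t(a)}_S.
\]

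Now the composite \(\pi_\infty\circ\dot\varphi\colon A\to\bigl(B/B(U)\bigr)_\infty\) is a genuine \Star{}homomorphism, and approximate \(X\)\nb-equivariance says precisely that \(\lim_{t\to\infty}\norm{\varphi_t(a)}_S=0\) for every \(a\in A(U)\), that is, that \(\pi_\infty\circ\dot\varphi\) annihilates the ideal \(A(U)\). It therefore descends to a \Star{}homomorphism \(\bar\psi\colon A/A(U)\to\bigl(B/B(U)\bigr)_\infty\) on the quotient \(\Cst\)\nb-algebra \(A/A(U)=A(S)\). Since every \Star{}homomorphism between \(\Cst\)\nb-algebras is contractive,
\[
\limsup_{t\to\infty}\norm{\varphi_t(a)}_S=\norm{\bar\psi\bigl(a+A(U)\bigr)}\le\norm{a+A(U)}=\norm{a}_S,
\]
which is the required estimate. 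I expect the factorisation step to be the conceptual heart: the point is recognising that approximate equivariance is exactly the statement that the asymptotic quotient morphism \(\pi_\infty\circ\dot\varphi\) kills \(A(U)\), so that contractivity of the induced map on \(A/A(U)\) yields the inequality for all \(a\) simultaneously. The remaining points—that \(\pi\) induces a well-defined \(\pi_\infty\) on asymptotic algebras and that the \(\limsup\) really is the asymptotic-algebra norm—are routine but should be recorded.
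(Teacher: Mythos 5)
Your proof is correct, but it is organised differently from the paper's. The paper proves the substantive direction by an explicit \(\varepsilon\)\nb-splitting: given \(a\in A\), write \(a=a_1+a_2\) with \(a_1\in A(U)\) and \(\norm{a_2}<\norm{a}_S+\varepsilon\) (this is just unfolding the definition of the quotient norm on \(A(S)=A/A(U)\)), then kill the first term using approximate equivariance and bound the second by \(\limsup\norm{\varphi_t(a_2)}=\norm{\dot\varphi(a_2)}\le\norm{a_2}\). You instead push everything into the asymptotic algebra of the quotient: the pointwise quotient map induces \(\pi_\infty\colon B_\infty\to\bigl(B(S)\bigr)_\infty\), approximate equivariance says exactly that \(\pi_\infty\circ\dot\varphi\) annihilates the ideal \(A(U)\), and the induced \Star{}homomorphism on \(A(S)\) is contractive. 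Both arguments rest on the same two facts, namely contractivity of \Star{}homomorphisms and the identification \(\norm{[f]}=\limsup_{t\to\infty}\norm{f(t)}\) in asymptotic algebras; but your factorisation replaces the paper's hand-made approximation of the quotient norm by the universal property of the quotient \(\Cst\)\nb-algebra. What your version buys is a conceptual restatement of the lemma: \(\varphi\) is approximately \(X\)\nb-equivariant if and only if, for every closed \(S\), the \Star{}homomorphism \(\pi_\infty\circ\dot\varphi\) factors through \(A(S)\), and the \(\limsup\) inequality is then automatic from contractivity. What the paper's version buys is self-containedness: it uses nothing beyond the definition of the quotient norm and subadditivity of \(\limsup\), without introducing the auxiliary algebra \(\bigl(B(S)\bigr)_\infty\).
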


\begin{proof}
  Let \(U\defeq X\setminus S\).  The \(\limsup\)-criterion specialises to the definition of \(X\)\nb-equivariance for \(a\in A(U)\).  Conversely, for any \(\varepsilon>0\) we may split \(a\in A\) as \(a=a_1+a_2\) with \(a_1\in A(U)\) and \(\norm{a_2} < \norm{a}_S+ \varepsilon\) and estimate
  \[
  \limsup {}\norm{\varphi_t(a)}_S
  \le \limsup {}\norm{\varphi_t(a_1)}_S + \limsup {}\norm{\varphi_t(a_2)}.
  \]
  The \(X\)\nb-equivariance of~\(\varphi\) and \(a_1\in A(U)\) imply \(\lim {}\norm{\varphi_t(a_1)}_S=0\), and
  \[
  \limsup {}\norm{\varphi_t(a_2)} = \norm{\dot\varphi(a_2)} \le \norm{a_2} <\norm{a}_S+\varepsilon.
  \]
  Thus \(\limsup {}\norm{\varphi_t(a)}_S < \norm{a}_S+\varepsilon\) for all \(\varepsilon>0\).
\end{proof}

Let \(U\in\Open(X)\) and \(S\defeq X\setminus U\).  The quotient map \(\pi_S\colon B \to B(S)\) induces a map \(\tilde{\pi}_S\colon \Cont_\bo(T,B)\to \Cont_\bo\bigl(T, B(S)\bigr)\) whose kernel is \(\Cont_\bo(T,B(U))\).  Condition~\eqref{eq:def0_ah_X_equiv} is equivalent to
\begin{equation}
  \label{eq:def2_ah_X_equiv}
  \tilde{\pi}_S\circ\varphi\bigl(A(U)\bigr)
  \subseteq \Cont_0\bigl(T, B(S)\bigr).
\end{equation}

\begin{lemma}
  \label{lemma:on_X_equiv}
  An asymptotic morphism~\(\varphi\) is approximately \(X\)\nb-equivariant if and only if, for all open subsets~\(U\) of~\(X\),
  \begin{equation}
    \label{eq:def1_ah_X_equiv}
    \varphi\bigl(A(U)\bigr)\subseteq
    \Cont_\bo\bigl(T,B(U)\bigr)+\Cont_0(T,B).
  \end{equation}
\end{lemma}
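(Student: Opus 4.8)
The plan is to reduce the asserted equivalence to the reformulation already recorded in~\eqref{eq:def2_ah_X_equiv} and then to a single functional-analytic lifting statement. Fix an open set $U\subseteq X$ and put $S\defeq X\setminus U$. By~\eqref{eq:def2_ah_X_equiv}, the defining condition~\eqref{eq:def0_ah_X_equiv} for this particular~$U$ is equivalent to $\tilde\pi_S\circ\varphi\bigl(A(U)\bigr)\subseteq\Cont_0\bigl(T,B(S)\bigr)$. It therefore suffices to establish, for each fixed~$U$, the following element-wise statement inside $\Cont_\bo(T,B)$: a function $c$ satisfies $\tilde\pi_S(c)\in\Cont_0\bigl(T,B(S)\bigr)$ if and only if $c\in\Cont_\bo\bigl(T,B(U)\bigr)+\Cont_0(T,B)$. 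Applying this with $c=\varphi(a)$ as $a$ ranges over $A(U)$, and then intersecting over all open~$U$, yields the lemma.

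One direction is immediate. Recall from the discussion preceding the lemma that $\ker\tilde\pi_S=\Cont_\bo\bigl(T,B(U)\bigr)$, and note that $\tilde\pi_S$ maps $\Cont_0(T,B)$ into $\Cont_0\bigl(T,B(S)\bigr)$ because $\pi_S$ is norm-decreasing. Hence if $c=f+g$ with $f\in\Cont_\bo\bigl(T,B(U)\bigr)$ and $g\in\Cont_0(T,B)$, then $\tilde\pi_S(c)=\tilde\pi_S(g)\in\Cont_0\bigl(T,B(S)\bigr)$. This shows that~\eqref{eq:def1_ah_X_equiv} implies approximate $X$\nb-equivariance.

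The substantial direction is the converse, and its crux is the surjectivity of the restricted quotient map $\Cont_0(T,B)\to\Cont_0\bigl(T,B(S)\bigr)$ induced by~$\pi_S$. Granting this, suppose $h\defeq\tilde\pi_S(c)=\pi_S\circ c$ lies in $\Cont_0\bigl(T,B(S)\bigr)$, and choose $g\in\Cont_0(T,B)$ with $\pi_S\circ g=h$. Then $\pi_S\circ(c-g)=0$, so $c-g\in\ker\tilde\pi_S=\Cont_\bo\bigl(T,B(U)\bigr)$, and $c=(c-g)+g$ is the desired decomposition; again one reads off the claim for $c=\varphi(a)$, $a\in A(U)$.

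I expect this surjectivity to be the only real obstacle. I would prove it using a continuous section $s\colon B(S)\to B$ of~$\pi_S$ with $s(0)=0$: such a section exists by the Bartle--Graves selection theorem, normalised by subtracting $s_0(0)\in B(U)$ from an arbitrary continuous section~$s_0$. Given $h\in\Cont_0\bigl(T,B(S)\bigr)$, the composite $g\defeq s\circ h$ is continuous, satisfies $\pi_S\circ g=h$, vanishes at infinity since $s(0)=0$ and $h(t)\to0$, and is bounded because $h$ extends continuously to the one-point compactification $[0,\infty]$ with value~$0$ at~$\infty$, so that $h\bigl([0,\infty]\bigr)$ is compact and $s\bigl(h([0,\infty])\bigr)$ is bounded; thus $g\in\Cont_0(T,B)$. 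Alternatively, the same surjectivity follows from the exactness of the functor $\Cont_0(T)\otimes\blank\cong\Cont_0(T,\blank)$, since $\Cont_0(T)$ is nuclear.
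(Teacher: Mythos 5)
Your proof is correct and follows essentially the same route as the paper's: both reduce the lemma to the equality $(\tilde\pi_S)^{-1}\bigl(\Cont_0(T,B(S))\bigr)=\Cont_\bo\bigl(T,B(U)\bigr)+\Cont_0(T,B)$ and prove the nontrivial inclusion via a Bartle--Graves continuous section of $\pi_S$, your decomposition $c=(c-s\circ\tilde\pi_S(c))+s\circ\tilde\pi_S(c)$ being exactly the paper's $f=g+h$ with $g=f-\gamma\circ\tilde\pi_S(f)$. If anything, you are more careful than the paper, which asserts $h\in\Cont_0(T,B)$ ``because $\gamma$ is continuous'' without spelling out the normalisation $\gamma(0)=0$ and the compactness argument for boundedness that you supply.
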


\begin{proof}
  It is clear that~\eqref{eq:def1_ah_X_equiv} implies~\eqref{eq:def2_ah_X_equiv}.  To verify the converse, it suffices to prove
  \[
  (\tilde{\pi}_S)^{-1}\bigl(\Cont_0(T,B(S))\bigr) = \Cont_\bo\bigl(T,B(U)\bigr) + \Cont_0(T,B).
  \]
  The Bartle--Graves Theorem provides a continuous section \(\gamma\colon B(S)\to B\) of~\(\pi_S\).  Any \(f\in \Cont_\bo(T,B)\) decomposes as \(f=g+h\) with \(g\defeq f - \gamma\circ\tilde{\pi}_S(f)\) and \(h \defeq \gamma\circ \tilde{\pi}_S(f)\).  We have \(g\in \Cont_\bo\bigl(T,B(U)\bigr)\) and \(h\in \Cont_0(T,B)\) whenever \(\tilde{\pi}_S(f) \in \Cont_0\bigl(T,B(S)\bigr)\) because~\(\gamma\) is continuous.
\end{proof}

For Hausdorff spaces~\(X\), Park and Trout~\cite{Park-Trout:Representable_E} and Popescu~\cite{Popescu:Equivariant_E} defined an \(\E\)\nb-theory \(\RE_*(X;A,B)\) for \(\Cont_0(X)\)-algebras based on asymptotic morphisms~\(\varphi\) that are \emph{asymptotically \(\Cont_0(X)\)-equivariant} in the sense that \(\varphi(fa)-f\varphi(a)\in \Cont_0(T,B)\) for all \(a\in A\) and \(f\in \Cont_0(X)\); equivalently, \(\dot\varphi\colon A\to B_\infty\) is \(\Cont_0(X)\)-linear.

\begin{proposition}
  \label{pro:compare_Popescu}
  Let~\(X\) be a second countable locally compact Hausdorff space and let \(A\) and \(B\) be \(\Cont_0(X)\)-algebras.  Then an asymptotic morphism from~\(A\) to~\(B\) is asymptotically \(\Cont_0(X)\)-equivariant if and only if it is approximately \(X\)\nb-equivariant.
\end{proposition}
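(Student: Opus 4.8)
The plan is to prove both implications, exploiting the standard description of a \(\Cont_0(X)\)\nb-algebra \(B\) as the section algebra of an upper semicontinuous \(\Cst\)\nb-bundle. Writing \(B_x\defeq B(\{x\})\) for the fibre at a point \(x\in X\) (closed, as \(X\) is Hausdorff) and \(\pi_x\colon B\to B_x\) for the quotient map, I would use the facts that \(\norm{b}_S=\sup_{x\in S}\norm{\pi_x(b)}\) for every closed \(S\subseteq X\) (so in particular \(\norm{b}=\sup_{x\in X}\norm{\pi_x(b)}\)), and that \(\Cont_0(X)\) acts on \(B_x\) through evaluation at \(x\), so \(\pi_x(f\cdot b)=f(x)\,\pi_x(b)\) for \(f\in\Cont_0(X)\); the latter holds because functions in \(\Cont_0(X)\) vanishing at \(x\) form the ideal \(\Cont_0(X\setminus\{x\})\), which sends \(B\) into \(B(X\setminus\{x\})=\ker\pi_x\). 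I would also use \(A(U)=\overline{\Cont_0(U)\cdot A}\).

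For the easy implication (asymptotically \(\Cont_0(X)\)\nb-equivariant \(\Rightarrow\) approximately \(X\)\nb-equivariant), I would first treat \(a=fb\) with \(f\in\Cont_0(U)\), \(b\in A\): here \(f\cdot\varphi_t(b)\in\Cont_0(U)\cdot B\subseteq B(U)\), so \(\norm{f\varphi_t(b)}_{X\setminus U}=0\), and since \(\varphi_t(fb)-f\varphi_t(b)\to0\) we get \(\norm{\varphi_t(fb)}_{X\setminus U}\to0\). Passing to finite sums \(\sum_j f_jb_j\) via asymptotic linearity and then to an arbitrary \(a\in A(U)\) by approximation, while controlling \(\limsup_t\norm{\varphi_t(a)-\varphi_t(a')}=\norm{\dot\varphi(a-a')}\le\norm{a-a'}\), yields \(\lim_t\norm{\varphi_t(a)}_{X\setminus U}=0\), i.e.\ approximate \(X\)\nb-equivariance.

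The main direction is the converse: I must show \(\lim_t\norm{\varphi_t(fa)-f\varphi_t(a)}=0\) for all \(f\in\Cont_0(X)\), \(a\in A\). Fibrewise this norm equals \(\sup_x\norm{\pi_x(\varphi_t(fa))-f(x)\pi_x(\varphi_t(a))}_x\). For a \emph{fixed} \(x\), subtracting a constant \(c\) in place of \(f(x)\) and using asymptotic linearity reduces the fibre estimate to bounding \(\norm{\pi_x(\varphi_t((f-c)a))}_x\); since \((f-c)a\) is small near \(x\) when \(c\approx f(x)\), Lemma~\ref{lem:X-equivariance} applied to a closed set containing \(x\) gives the bound. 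The crux of the proof — and the step I expect to be the real obstacle — is to make this \emph{uniform} in \(x\), since both the constant \(c\) and the relevant closed set vary with \(x\), while Lemma~\ref{lem:X-equivariance} delivers only an \(x\)\nb-dependent threshold through its \(\limsup\).

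To overcome this I would fix \(\varepsilon>0\) and use compactness to reduce to finitely many \(\limsup\)\nb-conditions. The set \(\{x:\abs{f(x)}\ge\varepsilon\}\) is compact, so by continuity of \(f\) it is covered by finitely many open sets \(U_1,\dots,U_n\) on each of which \(f\) oscillates by less than \(\varepsilon\); together with \(U_0\defeq\{x:\abs{f(x)}<\varepsilon\}\) these cover \(X\). I would choose \(c_i\) in the range of \(f\) on \(U_i\) (with \(c_0=0\)), so that \(\abs{f(x)-c_i}<\varepsilon\) for \(x\in U_i\) and \(\norm{(f-c_i)a}_{\overline{U_i}}\le\varepsilon\norm{a}\). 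Applying Lemma~\ref{lem:X-equivariance} to the \emph{finitely many} elements \((f-c_i)a\) and closed sets \(\overline{U_i}\) gives \(\limsup_t\norm{\varphi_t((f-c_i)a)}_{\overline{U_i}}\le\varepsilon\norm{a}\); these finitely many \(\limsup\)'s, together with the finitely many asymptotic-linearity defects \(\norm{\varphi_t(fa)-c_i\varphi_t(a)-\varphi_t((f-c_i)a)}\to0\), produce a single threshold \(t_0\). For \(t\ge t_0\) and any \(x\), choosing \(i\) with \(x\in U_i\) and combining the two fibre estimates bounds \(\norm{\pi_x(\varphi_t(fa))-f(x)\pi_x(\varphi_t(a))}_x\) by \(\varepsilon\bigl(\norm{a}+\sup_t\norm{\varphi_t(a)}\bigr)+o(1)\), uniformly in \(x\). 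Taking the supremum over \(x\), then \(t\to\infty\) and finally \(\varepsilon\to0\), gives \(\varphi(fa)-f\varphi(a)\in\Cont_0(T,B)\), which is asymptotic \(\Cont_0(X)\)\nb-equivariance.
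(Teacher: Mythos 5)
Your proof is correct, but in the crucial direction it follows a genuinely different route from the paper's. For the converse (approximately \(X\)\nb-equivariant \(\Rightarrow\) asymptotically \(\Cont_0(X)\)-equivariant), the paper never works fibrewise: it shows that \(\dot\varphi\) maps \(\Cont_0(U)A\) into \(\Cont_0(U)\cdot B_\infty\) for every open \(U\), which is equivalent to \(\Cont_0(X)\)-linearity of \(\dot\varphi\) by \cite{Meyer-Nest:Bootstrap}*{Proposition 2.11}; this containment is obtained by approximating down to relatively compact \(U'\) with \(\overline{U'}\subseteq U\), picking a Urysohn function \(w\in\Cont_0(U)\) equal to~\(1\) on~\(U'\), and feeding the structural description \(\varphi\bigl(A(U')\bigr)\subseteq \Cont_\bo\bigl(T,B(U')\bigr)+\Cont_0(T,B)\) of Lemma~\ref{lemma:on_X_equiv} through the inclusion \(\Cont_\bo\bigl(T,B(U')\bigr)\subseteq w\cdot\Cont_\bo(T,B)\). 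You instead prove the relation \(\varphi(fa)-f\varphi(a)\in\Cont_0(T,B)\) directly, using the bundle picture \(\norm{b}_S=\sup_{x\in S}\norm{\pi_x(b)}\), a finite partition of~\(X\) by oscillation of~\(f\) (compactness of \(\{\abs{f}\ge\varepsilon\}\)), and the \(\limsup\)-criterion of Lemma~\ref{lem:X-equivariance} applied to the finitely many elements \((f-c_i)a\) over the closed sets \(\overline{U_i}\); you correctly identify and solve the real obstacle, namely making the \(x\)\nb-dependent thresholds uniform. The trade-off: the paper's argument is shorter and stays entirely inside the ideal-lattice framework, but leans on the external characterisation of \(\Cont_0(X)\)-linearity and on the Bartle--Graves-based Lemma~\ref{lemma:on_X_equiv}, whereas yours is self-contained modulo the standard fibrewise norm formula for \(\Cont_0(X)\)-algebras, avoids both of those inputs, and produces an explicit quantitative estimate for the equivariance defect. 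Both proofs of the easy direction amount to the same observation that \(\Cont_0(U)\cdot\Cont_\bo(T,B)\subseteq\Cont_\bo\bigl(T,B(U)\bigr)\); you merely replace the paper's implicit Cohen factorisation \(A(U)=\Cont_0(U)A\) by an approximation argument, which is harmless.
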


\begin{proof}
  Clearly, an asymptotically \(\Cont_0(X)\)-equivariant asymptotic morphism satisfies~\eqref{eq:def1_ah_X_equiv} since \(A(U)=\Cont_0(U)A\) and \(\Cont_0(U)\Cont_\bo(T,B)\subseteq \Cont_\bo(T,B(U))\).  Conversely, let~\(\varphi\) be approximately \(X\)\nb-equivariant.  Let \(B_\infty^X \defeq \Cont_0(X)\cdot B_\infty \subseteq B_\infty\), this is a \(\Cont_0(X)\)-algebra.  We are going to show that \(\dot\varphi\bigl(\Cont_0(U)A\bigr)\) is contained in \(\Cont_0(U)\cdot B_\infty^X = \Cont_0(U)\cdot B_\infty\) for all \(U\in\Open(X)\).  This is equivalent to the \(\Cont_0(X)\)-linearity of \(\dot\varphi\colon A\to B_\infty^X\) by \cite{Meyer-Nest:Bootstrap}*{Proposition 2.11}.

  For any \(f\in\Cont_0(U)\) and any \(\varepsilon>0\), there are a relatively compact open subset \(U_\varepsilon\subseteq \overline{U}_\varepsilon\subseteq U\) and \(f_\varepsilon\in\Cont_0(U_\varepsilon)\) with \(\norm{f-f_\varepsilon}<\varepsilon\).  Since~\(A\) is a \(\Cont_0(X)\)-\(\Cst\)-algebra, the same approximation applies to all \(a\in A(U) = \Cont_0(U)\cdot A\).  Therefore, it suffices to prove \(\dot\varphi\bigl(A(U')\bigr) \subseteq \Cont_0(U)\cdot B_\infty\) for all relatively compact open subsets~\(U'\) of~\(U\) with \(\overline{U'}\subseteq U\) .

  Since there is a function~\(w\) in \(\Cont_0(U)\) with \(w(x)=1\) for all \(x\in U'\), we have
  \[
  \Cont_\bo\bigl(T,B(U')\bigr)
  \subseteq w\cdot \Cont_\bo(T,B)
  \subseteq \Cont_0(U)\cdot \Cont_\bo(T,B)
  \]
  for all \(n\in\N\).  Since~\(\varphi\) maps \(A(U')\) into \(\Cont_\bo\bigl(T,B(U')\bigr) + \Cont_0(T,B)\) by~\eqref{eq:def1_ah_X_equiv}, \(\dot\varphi\) maps \(A(U')\) into \(\Cont_0(U)\cdot B_\infty\) for all \(n\in\N\).
\end{proof}

\subsection{Homotopy of asymptotic morphisms}
\label{sec:homotopy_asy}

\begin{definition}
  \label{def:asm_X}
  A \emph{homotopy} of asymptotic morphisms from~\(A\) to~\(B\) is an asymptotic morphism from~\(A\) to \(\Cont([0,1],B)\).  Let \(\asm{A,B}\) denote the set of homotopy classes of approximately \(X\)\nb-equivariant asymptotic morphisms from~\(A\) to~\(B\).
\end{definition}

Equivalent asymptotic morphisms are homotopic.

We do not know whether there is a natural topology on~\(\Asm{A,B}\) such that \(\asm{A,B} = \pi_0(\Asm{A,B})\).  It is easy to avoid this question by using quasi-topological spaces in the sense of Edwin H. Spanier (see~\cite{Spanier:Quasi-topologies}).

\begin{definition}
  \label{def:quasi-topological}
  A \emph{quasi-topological space} is a set~\(W\) together with distinguished sets of maps \(\Cont(Y,W)\) from~\(Y\) to~\(W\) for each compact Hausdorff space~\(Y\), called \emph{quasi-continuous maps} \(Y\to W\).  These quasi-continuous maps are required to satisfy the following conditions:
  \begin{itemize}
  \item constant maps are quasi-continuous;
  \item a function defined on a disjoint union \(Y_1\sqcup Y_2\) is quasi-continuous if and only if its restrictions to \(Y_1\) and~\(Y_2\) are quasi-continuous;

  \item if \(f\colon Y_1\to Y_2\) is a quasi-continuous map and \(h\colon Y_2\to W\) is quasi-continuous, so is \(h\circ f\); and, conversely,

  \item if~\(f\) is surjective and continuous (so that~\(f\) is an open surjection), then~\(h\) is quasi-continuous provided \(h\circ f\) is quasi-continuous.
  \end{itemize}
\end{definition}

Since~\(W\) is the set of quasi-continuous functions from the one-point space to~\(W\), we may also view a quasi-topological space as a contravariant functor from the category of compact Hausdorff spaces to the category of sets with some additional properties.

We define a quasi-topology on \(\Asm{A,B}\) by letting
\[
\Cont(Y,\Asm{A,B}) \defeq \Asm{A,\Cont(Y,B)}
\]
for each compact Hausdorff space~\(Y\).

Furthermore, \(\Asm{A,B}\) has a canonical base point, the zero map.  Thus \(\Asm{A,B}\) becomes a pointed quasi-topological space.

Homotopy groups for pointed quasi-topological spaces may be defined as for ordinary topological spaces, using quasi-continuous maps instead of continuous maps.  By definition, \(\asm{A,B} = \pi_0(\Asm{A,B})\).

\subsection{\texorpdfstring{$\E$}{E}-theory: Definition and universal property}
\label{sec:E_def}

The original approach of Alain Connes and Nigel Higson in~\cite{Connes-Higson:Deformations} only works well for separable \(\Cst\)\nb-algebras.  The same restriction applies to our equivariant generalisation.  Hence we (tacitly) assume all \(\Cst\)\nb-algebras to be separable from now on.  For similar reasons, we assume the underlying space~\(X\) to be second countable, that is, its topology must have a countable basis.

\begin{definition}
  \label{def:E_X}
  Let~\(X\) be a second countable topological space and let \(A\) and~\(B\) be separable \(\Cst\)\nb-algebras over~\(X\).  Following~\cite{Connes-Higson:Deformations}, we define
  \[
  \E_0(X;A,B) \defeq \asm{\Cont_0(\R,A)\otimes\Comp,\Cont_0(\R,B)\otimes\Comp}.
  \]
  The orthogonal direct sum turns \(\E_0(X;A,B)\) into an Abelian group.  This also holds for $\E_1(X;A,B) \defeq \E_0(X;\Cont_0(\R,A),B)$.
\end{definition}

\begin{proposition}
  \label{prop:composition_ah_X}
  The composition of asymptotic morphisms induces a product
  \[
  \asm{A,B} \times \asm{B,C} \to \asm{A,C}.
  \]
\end{proposition}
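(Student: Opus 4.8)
The plan is to obtain the product from the classical Connes--Higson composition of asymptotic morphisms and then to verify the two features specific to the equivariant setting: that the composite may be chosen approximately \(X\)\nb-equivariant, and that the whole construction descends to homotopy classes. Recall first that for ordinary asymptotic morphisms \(\varphi=(\varphi_t)\colon A\to B\) and \(\psi=(\psi_s)\colon B\to C\) the diagonal family \((\psi_t\circ\varphi_t)\) need not be an asymptotic morphism. Connes and Higson~\cite{Connes-Higson:Deformations} remedy this with a reparametrisation: there is a continuous nondecreasing function \(R\colon T\to T\) with \(R(t)\to\infty\) such that \(\chi\defeq(\psi_{R(t)}\circ\varphi_t)\) is an asymptotic morphism, and its homotopy class depends only on those of \(\varphi\) and~\(\psi\), not on the admissible choice of~\(R\). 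I take this non-equivariant product as given and concentrate on equivariance.

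The main work is to arrange that \(\chi\) is approximately \(X\)\nb-equivariant. By Lemma~\ref{lemma:on_X_equiv} it suffices to check \(\chi\bigl(A(U)\bigr)\subseteq \Cont_\bo\bigl(T,C(U)\bigr)+\Cont_0(T,C)\) for every open \(U\subseteq X\); writing \(S\defeq X\setminus U\), \(\pi_S\colon C\to C(S)\) for the quotient map and \(\psi^S_s\defeq\pi_S\circ\psi_s\), this amounts to \(\lim_{t}\norm{\psi^S_{R(t)}(\varphi_t(a))}=0\) for all \(a\in A(U)\). Fix such an~\(a\). Using \eqref{eq:def1_ah_X_equiv} for~\(\varphi\) together with a Bartle--Graves section as in the proof of Lemma~\ref{lemma:on_X_equiv}, I would write \(\varphi_t(a)=b_t+n_t\) with \(b_t\in B(U)\) a bounded continuous family and \(\norm{n_t}\to0\). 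The guiding idea is then transparent: \(\varphi\) pushes \(A(U)\) into \(B(U)\) in the limit, and \(\psi\) pushes \(B(U)\) into \(C(U)\) in the limit, so the composite should push \(A(U)\) into \(C(U)\) in the limit. Concretely, approximate \(X\)\nb-equivariance of~\(\psi\) gives \(\psi^S_s(b)\to0\) for each fixed \(b\in B(U)\), while \(\limsup_s\norm{\psi_s(n)}\le\norm{n}\) controls the error term~\(n_t\).

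The principal obstacle is that these limits for~\(\psi\) hold only pointwise in the argument as \(s\to\infty\), whereas in \(\psi^S_{R(t)}(\varphi_t(a))\) the argument \(\varphi_t(a)\) moves with~\(t\) while \(s=R(t)\). This moving-argument phenomenon is exactly what already forces the reparametrisation, so I would impose the equivariance conditions and the asymptotic-morphism conditions on~\(R\) simultaneously. Here second countability is essential: the topology of~\(X\) has a countable basis \((U_n)\), and by \eqref{eq:def1_ah_X_equiv} it is enough to test equivariance on the~\(U_n\) and, by separability, on a countable dense subset of each \(A(U_n)\). Each resulting requirement says that a prescribed continuous function of~\(t\) tends to~\(0\) provided \(R\) grows fast enough; crucially, on each interval \([N-1,N]\) the relevant arguments \(\{\varphi_t(a),b_t,n_t : t\in[N-1,N]\}\) form compact sets, on which the asymptotic behaviour of~\(\psi\) (multiplicativity, linearity, and the vanishing \(\psi^S_s(b)\to0\) for \(b\in B(U)\)) is uniform. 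A single continuous nondecreasing~\(R\) satisfying all countably many lower bounds is then produced by the usual diagonal argument, and for this~\(R\) the family \(\chi\) is at once an asymptotic morphism and approximately \(X\)\nb-equivariant.

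It remains to see that the product is well defined on \(\asm{A,B}\times\asm{B,C}\). A homotopy of asymptotic morphisms \(A\to B\) is an asymptotic morphism into \(\Cont([0,1],B)\), and \(\Cont([0,1],B)\) is a \(\Cst\)\nb-algebra over~\(X\) with \(\Cont([0,1],B)(U)=\Cont\bigl([0,1],B(U)\bigr)\), so approximate \(X\)\nb-equivariance of homotopies is meaningful. Applying the same reparametrised composition in the \(\Cont([0,1],\blank)\)\nb-variable carries an \(X\)\nb-equivariant homotopy \(\varphi^0\simeq\varphi^1\) to one \(\chi^0\simeq\chi^1\), and Remark~\ref{rem:approx_equivariant_equivalent} upgrades the classical independence of the homotopy class from~\(R\) to the equivariant level, since equivalent asymptotic morphisms remain approximately \(X\)\nb-equivariant. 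This yields a well-defined map on homotopy classes, completing the construction of the product.
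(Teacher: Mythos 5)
Your proposal is correct and takes essentially the same route as the paper: its Lemma~\ref{lem:composition_X-equivariant} does precisely what you describe, fixing a countable basis \((U_i)\) and dense sequences \((a_{ij})_j\) in each \(A(U_i)\), decomposing \(\varphi(a_{ij})=f_{ij}+g_{ij}\) via~\eqref{eq:def1_ah_X_equiv}, exploiting compactness of the ranges of these functions over compact time intervals to make the asymptotic behaviour of~\(\psi\) (additivity, boundedness, and decay in \(\norm{\cdot}_{X\setminus U_i}\)) uniform, and then choosing the reparametrisation by a diagonal argument. The one point you leave implicit, and which the paper states explicitly as a hypothesis of that lemma, is that \(\psi\) (and \(\varphi\)) must first be replaced by equivalent \emph{uniformly continuous} asymptotic morphisms via the Bartle--Graves theorem, since without this equicontinuity the pointwise asymptotic relations need not be uniform on compact sets.
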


The proof is similar to the non-equivariant case outlined in~\cite{Connes:NCG}.  In addition to the arguments from \cite{Connes:NCG}*{Appendix~B of Chapter~II}, we need the following lemma to take care of approximate \(X\)\nb-equivariance.

Recall that an asymptotic morphism~\(\varphi\) is called \emph{uniformly continuous} if the map \(\varphi\colon A \to \Cont_\bo(T,B)\) is continuous.  By the Bartle--Graves Theorem, every asymptotic morphism is equivalent to a uniformly continuous one.

\begin{lemma}
  \label{lem:composition_X-equivariant}
  Let~\(X\) be a second countable topological space, let \(A\), \(B\) and~\(C\) be separable \(\Cst\)\nb-algebras, and  let \(\varphi\colon A \to \Cont_\bo(T,B)\) and \(\psi\colon B \to \Cont_\bo(T,C)\) be uniformly continuous, approximately \(X\)\nb-equivariant asymptotic morphisms.  Let~\(A_0\) be a \(\sigma\)\nb-compact dense \Star{}subalgebra of~\(A\).  There is an increasing, continuous map \(r_0\colon T \to T\) such that for any other increasing, continuous map \(r\colon T\to T\) with \(r(t)\ge r_0(t)\) for all \(t\in T\), there is an approximately \(X\)\nb-equivariant asymptotic morphism \(\theta\colon A \to \Cont_\bo(T,C)\) such that \(\lim_{t\to \infty} {}\norm{\theta_t(a)-\psi_{r(t)}\circ \varphi_t(a)}=0\) for all \(a\in A_0\).
\end{lemma}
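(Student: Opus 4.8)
The plan is to build~\(\theta\) by the standard non-equivariant composition of asymptotic morphisms and then to enlarge the resulting reparametrisation bound~\(r_0\) so as to also force approximate \(X\)\nb-equivariance.  First I would invoke the construction of \cite{Connes:NCG}*{Appendix~B of Chapter~II}: it produces an increasing continuous \(r_0\colon T\to T\) such that for any increasing continuous \(r\ge r_0\) one obtains a genuine asymptotic morphism \(\theta\colon A\to\Cont_\bo(T,C)\) with \(\lim_{t\to\infty}\norm{\theta_t(a)-\psi_{r(t)}\circ\varphi_t(a)}=0\) for all \(a\in A_0\).  It then remains only to arrange, by possibly increasing~\(r_0\), that~\(\theta\) be approximately \(X\)\nb-equivariant; since both constraints on~\(r_0\) have the form ``\(r_0(t)\) large, growing with~\(t\)'', they are compatible upon taking pointwise maxima.

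To verify equivariance I would use the \(\limsup\)\nb-criterion of Lemma~\ref{lem:X-equivariance}: it suffices to show \(\limsup_{t\to\infty}\norm{\theta_t(a)}_S\le\norm{a}_S\) for every closed \(S\subseteq X\) and every \(a\in A\).  Both \(a\mapsto\limsup_t\norm{\theta_t(a)}_S\) (which is \(1\)\nb-Lipschitz, because \(\dot\theta\) is a contractive \Star{}homomorphism) and \(a\mapsto\norm{a}_S\) are continuous, so the inequality extends from the dense subalgebra~\(A_0\) to all of~\(A\); and on~\(A_0\) we may replace \(\theta_t\) by \(\psi_{r(t)}\circ\varphi_t\).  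Moreover, since~\(X\) is second countable and \(\norm{a}_{X\setminus U}\le\norm{a}_{X\setminus U_i}\) whenever \(U_i\subseteq U\), it suffices to treat the closed sets \(S_i\defeq X\setminus U_i\) coming from a countable basis~\((U_i)\), the case of a general~\(U\) following from the generation of \(A(U)\) by the ideals \(A(U_i)\).  Writing \(A_0=\bigcup_m K_m\) with \(K_1\subseteq K_2\subseteq\dotsb\) compact, I am thus reduced to proving
\[
\limsup_{t\to\infty}\norm{\psi_{r(t)}\circ\varphi_t(a)}_{S_i}\le\norm{a}_{S_i}
\qquad\text{for all }a\in A_0\text{ and all }i.
\]

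The key estimate combines the two equivariance hypotheses through Lemma~\ref{lem:X-equivariance}: applied to~\(\varphi\) it gives \(\limsup_t\norm{\varphi_t(a)}_{S_i}\le\norm{a}_{S_i}\), and applied to~\(\psi\) it gives the contraction \(\limsup_s\norm{\psi_s(b)}_{S_i}\le\norm{b}_{S_i}\) for \emph{every} \(b\in B\).  The difficulty I expect to be the main obstacle is that in \(\psi_{r(t)}\circ\varphi_t(a)\) the argument \(\varphi_t(a)\) varies with~\(t\) while \(\psi\) is evaluated at the time \(r(t)\), which may grow arbitrarily fast, so the pointwise contraction for~\(\psi\) cannot be applied directly and must be made uniform.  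I would resolve this by the bounded-time compactness trick: for each~\(N\) the set \(C_N\defeq\{\varphi_\tau(a):\tau\in[0,N],\ a\in K_N\}\) is compact, being the continuous image of \([0,N]\times K_N\).  Because~\(\psi\) is uniformly continuous, the functions \(b\mapsto\norm{\psi_s(b)}_{S_i}\) are equicontinuous in~\(s\), and their pointwise \(\limsup\) is dominated by \(\norm{b}_{S_i}\); a finite-subcover argument on~\(C_N\) then yields a time \(s_N\) (increasing in~\(N\)) with
\[
\norm{\psi_s(b)}_{S_i}\le\norm{b}_{S_i}+1/N
\qquad\text{for all }s\ge s_N,\ b\in C_N,\ i\le N.
\]
Finally I would require \(r_0(t)\ge s_N\) for \(t\in[N-1,N]\).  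Then for \(t\in[N-1,N]\), \(a\in K_N\) and \(i\le N\) one has \(\varphi_t(a)\in C_N\) and \(r(t)\ge s_N\), so \(\norm{\psi_{r(t)}\circ\varphi_t(a)}_{S_i}\le\norm{\varphi_t(a)}_{S_i}+1/N\); letting \(t\to\infty\) and using \(\limsup_t\norm{\varphi_t(a)}_{S_i}\le\norm{a}_{S_i}\) gives the desired inequality for each basic~\(S_i\), whence the full approximate \(X\)\nb-equivariance of~\(\theta\) by the reductions above.
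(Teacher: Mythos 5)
Your proof is correct, but it establishes approximate \(X\)\nb-equivariance by a genuinely different device than the paper, even though the skeleton is the same: both invoke the non-equivariant reparametrisation result to obtain the asymptotic morphism~\(\theta\), reduce to a countable basis \((U_i)\), and use the uniform continuity of~\(\psi\) to uniformise its asymptotic behaviour over compact subsets of~\(B\). The paper verifies the defining vanishing condition~\eqref{eq:def0_ah_X_equiv} on dense sequences \((a_{ij})_j\) in each ideal \(A(U_i)\); since that condition only concerns elements of ideals, it must first split \(\varphi(a_{ij})=f_{ij}+g_{ij}\) with \(f_{ij}\in\Cont_\bo\bigl(T,B(U_i)\bigr)\) and \(g_{ij}\in\Cont_0(T,B)\) using Lemma~\ref{lemma:on_X_equiv}, build compact trajectory sets \(K_n\) and \(L_{i,n}\subseteq B(U_i)\) from these pieces, and control them separately: the \(f\)\nb-part by the equivariance of~\(\psi\), the \(g\)\nb-part by approximate additivity and boundedness. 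You instead run both the hypotheses and the conclusion through the \(\limsup\)\nb-criterion of Lemma~\ref{lem:X-equivariance}. Since that inequality makes sense for arbitrary elements (not only those lying in \(A(U)\) or \(B(U)\)), you can check it on the compact exhaustion \((K_N)\) of~\(A_0\) and extend by \(1\)\nb-Lipschitz continuity of both sides, and your compact sets \(C_N=\{\varphi_\tau(a):\tau\in[0,N],\ a\in K_N\}\) need not sit inside any ideal: the error term that the paper isolates as \(g_{ij}\in\Cont_0(T,B)\) is absorbed automatically by the \(\limsup\) inequality for~\(\psi\) applied to arbitrary \(b\in B\). Your route buys a proof with no decomposition lemma (hence no appeal to Bartle--Graves) and no dense sequences inside each \(A(U_i)\) --- density of \(A_0\) in~\(A\) suffices, which matches the hypothesis exactly, and you only ever evaluate \(\psi_{r(t)}\circ\varphi_t\) on~\(A_0\), where it agrees asymptotically with~\(\theta\). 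The paper's route, in exchange, stays at the level of the definition and produces the explicit three-term estimate that is reused almost verbatim in the proof of Theorem~\ref{the:approximate_E}. The steps you leave implicit --- joint continuity of \((\tau,a)\mapsto\varphi_\tau(a)\) (which needs the uniform continuity of~\(\varphi\) and gives compactness of~\(C_N\)), the equicontinuity/finite-subcover uniformisation of \(\limsup_s{}\norm{\psi_s(b)}_{S_i}\le\norm{b}_{S_i}\) over~\(C_N\), arranging~\(r_0\) increasing and continuous, and the passage from basis sets to arbitrary unions --- are all routine, and the uniformisation step is precisely the argument the paper itself needs (implicitly) to produce its times~\(r_{i,n}\).
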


\begin{proof}
  Let \((U_i)_{i=1}^\infty\) be a basis of open sets for the topology of~\(X\).  Choose a dense sequence \((a_{ij})_{j=1}^\infty\) in \(A(U_i)\) for each \(i\ge 1\).  We will find a map~\(r_0\) such that, for all \(r\ge r_0\),
  \begin{enumerate}[label=(\roman{*})]
  \item \((\psi_{r(t)}\varphi_t)\) is a bounded asymptotic morphism from~\(A_0\) to~\(C\), and
  \item \(\lim_{t\to \infty} {}\norm{\psi_{r(t)}\circ \varphi_t(a_{ij})}_{X\setminus U_i}=0\) for all \(i,j\).
  \end{enumerate}
  Then \(\psi_{r(t)}\circ\varphi_t\) defines a bounded \Star{}homomorphism \(A_0\to C_\infty\) by~(i).  It extends to a \Star{}homomorphism~\(\dot\theta\) on~\(A\).  Let \(\theta\colon A \to \Cont_\bo(T,C)\) be a lifting of~\(\dot\theta\).  Then~\(\theta\) is approximately \(X\)\nb-equivariant by~(ii).

  It remains to construct~\(r_0\).  By the usual non-equivariant case, there is a continuous map~\(r_{00}\) such that~(i) holds for all \(r\ge r_{00}\).  Since \(\varphi\bigl(A(U_i)\bigr)\subseteq \Cont_\bo\bigl(T,B(U_i)\bigr)+\Cont_0(T,B)\), there are \(f_{ij}\in \Cont_\bo\bigl(T,B(U_i)\bigr)\) and \(g_{ij}\in \Cont_0(T,B)\) such that \(\varphi(a_{ij})=f_{ij}+g_{ij}\) for all \(i,j\ge 1\).  Consider the following countable families of compact sets:
  \begin{align*}
    K_n&\defeq \bigcup_{i,j=1}^n f_{ij}[1,n+1] \cup
    g_{ij}[1,n+1]\subseteq B,\\
    L_{i,n} &\defeq \bigcup_{j=1}^{n} f_{ij}[1,n+1] \subseteq
    B(U_i).
  \end{align*}
  Since~\(\psi\) is a uniformly continuous asymptotic morphism, we can inductively construct an increasing sequence~\((s_n)_n\) such that for any \(s\ge s_n\)
  \begin{alignat}{2}
    \label{eq:psi_additive}
    \norm{\psi_s(x+y)-\psi_s(x)-\psi_s(y)}&<1/n,
    &\qquad&\text{for all \(x,y \in K_n\),}\\
    \label{eq:psi_bounded}
    \norm{\psi_s(x)} &<\norm{x} + 1/n,
    &\qquad&\text{for all \(x \in K_n\).}
  \end{alignat}
  Since~\(\psi\) is approximately \(X\)\nb-equivariant and \(L_{i,n}\subseteq B(U_i)\), for each~\(i\) there is an increasing sequence~\((r_{i,n})_n\) such that
  \begin{equation}
    \label{eq:psi_residual}
    \norm{\psi_s(x)}_{X\setminus U_i}< 1/n,
    \qquad\text{for all \(x \in L_{i,n}\) and all \(s\ge r_{i,n}\).}
  \end{equation}
  Choose an increasing continuous map \(r_0\colon T\to T\) with \(r_0(t)\ge r_{00}(t)\) and \(r_0(n)\ge \max\{s_n,r_{1,n},r_{2,n},\dotsc,r_{n,n}\}\) for all \(n\ge 1\).  We claim that any increasing, continuous function \(r\ge r_0\) satisfies~(ii).  This will finish the proof.

  Fix \(i,j\) and \(\varepsilon>0\).  Choose~\(n\) such that \(n\ge i\), \(n\ge j\) and \(1/n<\varepsilon/3\).  We shall show that for any \(t\ge n\),
  \[
  \norm{\psi_{r(t)}\circ \varphi_t(a_{ij})}_{X\setminus U_i} < \varepsilon + \norm{g_{ij}(t)}.
  \]
  This will conclude the proof since \(\lim_{t\to \infty} g_{ij}(t)=0\) by construction.  If \(t\ge n\), then there is an integer $m\geq n$ such that $m\le t < m+1$.  Therefore $f_{ij}(t)$ and~$g_{ij}(t)$ are in~$K_m$ and $r(t)\geq r(m)\geq s_m$.  Equation~\eqref{eq:psi_additive} yields
  \begin{equation}
    \label{eq:psi_additive_ep}
    \norm{\psi_{r(t)}(f_{ij}(t)+g_{ij}(t)) -
      \psi_{r(t)}(f_{ij}(t))-\psi_{r(t)}(g_{ij}(t))}
    <1/m<\varepsilon/3.
  \end{equation}
  Since \(i,j\le n \le m\) and $t<m+1$, we have \(f_{ij}(t)\in L_{i,m}\) and \(r(t)\ge r(m)\ge r_{i,m}\).  Inequality~\eqref{eq:psi_residual} yields
  \begin{equation}
    \label{eq:psi_residual_ep}
    \norm{\psi_{r(t)}(f_{ij}(t))}_{X\setminus U_i}<1/m <\varepsilon/3.
  \end{equation}
  Similarly, \eqref{eq:psi_bounded} yields
  \begin{equation}
    \label{eq:psi_bounded_ep}
    \norm{\psi_{r(t)}(g_{ij}(t))}\le \norm{g_{ij}(t)}+1/m<\norm{g_{ij}(t)}+\varepsilon/3.
  \end{equation}
  Putting together \eqref{eq:psi_additive_ep}, \eqref{eq:psi_residual_ep} and~\eqref{eq:psi_bounded_ep}, we get
  \begin{align*}
    \norm{\psi_{r(t)}\varphi_t(a_{ij})}_{X\setminus U_i}
    &\le \norm{\psi_{r(t)}(f_{ij}(t)+g_{ij}(t))-\psi_{r(t)}(f_{ij}(t))-\psi_{r(t)}(g_{ij}(t))}\\
    &\qquad+\norm{\psi_{r(t)}(f_{ij}(t))}_{X\setminus U_i}+\norm{\psi_{r(t)}(g_{ij}(t))}\\
    &<\varepsilon+\norm{g_{ij}(t)}.\qedhere
  \end{align*}
\end{proof}

For any extension of separable \(\Cst\)\nb-algebras \(I \into A \stackrel{p}\prto B\), there is a canonical asymptotic morphism from \(\Cont_0\bigl((0,1),B\bigr)\) to~\(I\).  If~$A$ is a \(\Cst\)\nb-algebra over~$X$, then $I$ and~$B$ become \(\Cst\)\nb-algebras over~$X$ in a unique natural way, such that the given extension is an extension of \(\Cst\)\nb-algebras over~\(X\).  Specifically, $I(U)=I\cap A(U)$ and $B(U)=p(A(U))$ for all~$U$ open in~$X$.

\begin{proposition}
  \label{proposition:extensions_X}
  Let \(I \into A \prto B\) be an extension of \(\Cst\)\nb-algebras over~\(X\).  Then the associated asymptotic morphism from \(\Cont_0\bigl((0,1),B\bigr)\) to~\(I\) is approximately \(X\)\nb-equivariant.
\end{proposition}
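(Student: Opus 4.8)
The plan is to verify the defining condition~\eqref{eq:def0_ah_X_equiv} directly, for a conveniently chosen representative of the Connes--Higson asymptotic morphism. Recall its construction: fix a continuous, increasing approximate unit \((u_t)_{t\in T}\) for~\(I\) that is quasicentral in~\(A\), together with a continuous section \(s\colon B\to A\) of~\(p\); on elementary tensors the associated asymptotic morphism is represented by \(\varphi_t(f\otimes b)=f(u_t)\,s(b)\) for \(f\in\Cont_0(0,1)\) and \(b\in B\). By Remark~\ref{rem:approx_equivariant_equivalent}, approximate \(X\)\nb-equivariance is insensitive to the choice of representative, so this one suffices. The domain \(D\defeq\Cont_0\bigl((0,1),B\bigr)\) carries the \(X\)\nb-algebra structure \(D(U)=\Cont_0\bigl((0,1),B(U)\bigr)\), and I must show \(\lim_t\norm{\varphi_t(h)}_{X\setminus U}=0\) for all \(U\in\Open(X)\) and all \(h\in D(U)\).

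I would first reduce to elementary tensors. Fix \(U\in\Open(X)\), put \(S\defeq X\setminus U\), and let \(q_I\colon I\to I(S)=I/I(U)\) be the quotient map, so that \(\norm{x}_{X\setminus U}=\norm{q_I(x)}\); here \(I(U)=I\cap A(U)\). The set of \(h\in D(U)\) with \(\lim_t\norm{\varphi_t(h)}_{X\setminus U}=0\) is a closed linear subspace: it is closed under sums by asymptotic additivity of~\(\varphi\), and closed in norm because \(\limsup_t\norm{\varphi_t(h)-\varphi_t(h')}_{X\setminus U}\le\limsup_t\norm{\varphi_t(h-h')}=\norm{\dot\varphi(h-h')}\le\norm{h-h'}\), using that \(\dot\varphi\) is contractive. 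Since the elementary tensors \(f\otimes b\) with \(f\in\Cont_0(0,1)\) and \(b\in B(U)\) span a dense subspace of \(D(U)\), it is enough to treat such tensors.

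So fix \(f\in\Cont_0(0,1)\) and \(b\in B(U)=p\bigl(A(U)\bigr)\), and choose \(a\in A(U)\) with \(p(a)=b\). Then \(k\defeq s(b)-a\in\ker p=I\), and \(\varphi_t(f\otimes b)=f(u_t)\,a+f(u_t)\,k\). Since \(f(0)=0\) we have \(f(u_t)\in I\), and as \(a\in A(U)\) with \(A(U)\) an ideal, the first summand lies in \(I\cap A(U)=I(U)\) and so vanishes in \(I(S)\). It remains to control \(q_I\bigl(f(u_t)\,k\bigr)=f(v_t)\,q_I(k)\), where \(v_t\defeq q_I(u_t)\), being the image of an approximate unit under the surjective \Star{}homomorphism~\(q_I\), is an approximate unit for \(I(S)\) with \(0\le v_t\le1\). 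The crux is the elementary fact that if \((v_t)\) is an approximate unit with \(0\le v_t\le1\) in a \(\Cst\)\nb-algebra~\(J\) and \(g\in\Cont([0,1])\) satisfies \(g(1)=0\), then \(\lim_t\norm{g(v_t)\,j}=0\) for every \(j\in J\); applying this with \(g=f\) (note \(f(1)=0\)) and \(j=q_I(k)\) finishes the proof, giving \(\norm{\varphi_t(f\otimes b)}_{X\setminus U}\le\norm{f(v_t)\,q_I(k)}\to0\). To prove the fact, fix \(\varepsilon>0\), pick \(\delta>0\) with \(\abs{g}<\varepsilon\) on \([1-\delta,1]\), and choose \(\eta\in\Cont([0,1])\) with \(0\le\eta\le1\), \(\eta\equiv1\) on \([1-\delta/2,1]\) and \(\eta\equiv0\) on \([0,1-\delta]\); then \(\norm{g\eta}_\infty<\varepsilon\) gives \(\norm{g(v_t)\eta(v_t)\,j}\le\varepsilon\norm{j}\), while writing \((1-\eta)(x)=(1-x)\rho(x)\) with \(\rho\in\Cont([0,1])\) bounded (possible since \(1-x\ge\delta/2\) on the support of \(1-\eta\)) yields \(g(v_t)(1-\eta)(v_t)\,j=g(v_t)\rho(v_t)(1-v_t)\,j\to0\) because \((1-v_t)\,j\to0\).

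The main obstacle is exactly the term \(f(u_t)\,k\): it lies in~\(I\) but \emph{not} in \(I(U)\), so it cannot be discarded before passing to the quotient \(I/I(U)\), and naively verifying~\eqref{eq:def1_ah_X_equiv} on elementary tensors fails for this reason. The observation that rescues the argument is that \(q_I\) sends the quasicentral approximate unit \((u_t)\) to an approximate unit of \(I/I(U)\), so that the vanishing of~\(f\) at the endpoint~\(1\) forces \(f(v_t)\) to annihilate \(q_I(k)\) in the limit. Conceptually this is the asymptotic naturality of the Connes--Higson construction applied to the quotient morphism of extensions \(\bigl(I\into A\prto B\bigr)\to\bigl(I(S)\into A(S)\prto B(S)\bigr)\), but the direct computation above sidesteps the need to make that naturality precise.
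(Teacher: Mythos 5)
Your proof is correct, but it takes a genuinely different route from the paper's. The paper never introduces a section of $p$: it works upstairs on $SA$, where the Connes--Higson formula $\gamma'_t(f\otimes a)\defeq f(u_t)\,a$ sends an elementary tensor with $a\in A(U)$ \emph{exactly} into $I\cdot A(U)\subseteq I(U)$, so that $\gamma'$ is approximately $X$\nb-equivariant with no limiting argument at all; this equivariance is then inherited by the induced morphism $\gamma$ on $SB$ because $\dot\gamma\circ Sp=\dot\gamma'$, where $Sp\colon SA\to SB$ is the suspension of the quotient map, and $Sp$ maps $SA(U)$ onto $SB(U)$. By working downstairs on $SB$ through a Bartle--Graves section you are forced to handle the error term $f(u_t)k$ with $k=s(b)-a\in I$; that is, you redo the descent step of the Connes--Higson construction by hand, and your approximate-unit lemma does this correctly. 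Note, however, that your detour through the quotient $I(S)$ is superfluous: since $(u_t)$ is an approximate unit for~$I$, $k\in I$ and $f(1)=0$, your own lemma applied with $J=I$, $v_t=u_t$, $j=k$ already gives $\norm{f(u_t)k}\to 0$ in norm in $I$ itself --- this is exactly the standard fact that $\gamma'$ restricted to $SI$ is equivalent to the zero asymptotic morphism, which is what makes $\gamma$ well defined on $SB$ in the first place --- and then $\norm{f(u_t)k}_{X\setminus U}\le\norm{f(u_t)k}\to 0$ without ever mentioning $I(S)$ or $v_t$. So both proofs rest on the same algebraic kernel, namely $f(u_t)\cdot A(U)\subseteq I\cap A(U)=I(U)$; what the paper's route buys is that the section and its error term never appear, while your route buys a self-contained verification of~\eqref{eq:def0_ah_X_equiv} directly on $SB$ that does not require transferring equivariance along the factorisation through $SA$.
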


\begin{proof}
  Having an extension of \(\Cst\)\nb-algebras over~\(X\) means that we have \(\Cst\)\nb-algebra extensions
  \[
  I(U) \into A(U) \prto B(U)
  \]
  for all open subsets~\(U\) of~\(X\).  Since the map \(B(U)\to B\) is injective, this implies \(I(U) = I\cap A(U)= I\cdot A(U)\).

  We fix a positive and contractive continuous approximate unit \((u_t)_{t\in T}\) of~\(I\) which is quasi-central in~\(A\).  The canonical asymptotic morphism
  \[
  \gamma\colon SB \defeq \Cont_0\bigl((0,1),B\bigr) \to \Cont_\bo(T,I)
  \]
  is defined in two steps.  First, we define a homomorphism
  \[
  \gamma'\colon SA\to \Cont_\bo(T,I) \bigm/ \Cont_0(T,I),\qquad
  \gamma'_t (f\otimes a) \defeq f(u_t)\cdot a.
  \]
  Secondly, since the restriction of~\(\gamma'\) to~\(SI\) is equivalent to the null asymptotic morphism, \(\gamma'\)~induces an asymptotic morphism from~\(SB\) to~\(I\).  Clearly, $\gamma'$~is approximately \(X\)\nb-equivariant because \(I\cdot A(U)\subseteq I(U)\).  This is inherited by~\(\gamma\) because \(\dot\gamma\circ p=\dot\gamma'\), where $p\colon A\to B$ is the quotient map.
\end{proof}

Let $I\into B \stackrel{p}\prto C$ be an extension of \(\Cst\)\nb-algebras over~\(X\).  Let~$A$ be a \(\Cst\)\nb-algebra over~\(X\) and let $\varphi\colon A \to C$ be an \(X\)\nb-equivariant \Star{}homomorphism.  Let~$E$ be the \(\Cst\)\nb-algebra defined by the pullback diagram
\[
\xymatrix{
  0\ar[r]&
  I\ar[r]\ar@{=}[d] &
  E\ar[r]\ar[d]&
  A\ar[r]\ar[d]^{\varphi}&
  0\\
  0\ar[r] &
  I\ar[r]&
  B\ar[r]^{p}&
  C\ar[r]&
  0,
}
\]
that is, $E=\{(a,b)\in A\oplus B : \varphi(a)=p(b)\}$.  For $U\in \Open(X)$, set $E(U) \defeq E\cap \bigl(A(U)\oplus B(U)\bigr)$.

\begin{lemma}
  \label{lemma:pullbacKs_are_OK}
  $E$~is a \(\Cst\)\nb-algebra over~\(X\) and $I\into E \prto A$ is an extension of \(\Cst\)\nb-algebras over~\(X\).  The same conclusions hold if $B$ and~$C$ are only quasi \(\Cst\)\nb-algebras over~\(X\).
\end{lemma}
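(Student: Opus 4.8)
The plan is to check that \(U\mapsto E(U)\) satisfies the lattice conditions~(1)--(3) making \(E\) a \(\Cst\)\nb-algebra over~\(X\), and to read off the extension property along the way. First I would dispose of the structural bookkeeping. Since \(E\) is the kernel of the \Star{}homomorphism \(A\oplus B\to C\), \((a,b)\mapsto \varphi(a)-p(b)\), it is a \(\Cst\)\nb-subalgebra of \(A\oplus B\); each \(E(U)=E\cap\bigl(A(U)\oplus B(U)\bigr)\) is the intersection of~\(E\) with an ideal of \(A\oplus B\), hence a closed ideal of~\(E\), and \(U\mapsto E(U)\) is visibly order preserving with \(E(\emptyset)=0\), \(E(X)=E\), which is~(1). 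None of this uses~(3), so it survives unchanged when \(B\) and~\(C\) are only quasi \(\Cst\)\nb-algebras.

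For~(2) I would invoke that for closed ideals in a \(\Cst\)\nb-algebra the closed product equals the intersection, so that \(E(U_1)\cdot E(U_2)=E(U_1)\cap E(U_2)=E\cap\bigl((A(U_1)\cap A(U_2))\oplus(B(U_1)\cap B(U_2))\bigr)\); applying~(2) for \(A\) and~\(B\) in the form \(A(U_1)\cap A(U_2)=A(U_1\cap U_2)\) then yields \(E(U_1\cap U_2)\). For the finite-union law~(3') I would take \((a,b)\in E(U_1\cup U_2)\), write \(a=a_1+a_2\), \(b=b_1+b_2\) with \(a_i\in A(U_i)\), \(b_i\in B(U_i)\), and note that \(c\defeq\varphi(a_1)-p(b_1)=-(\varphi(a_2)-p(b_2))\) lies in \(C(U_1)\cap C(U_2)=C(U_1\cap U_2)=p\bigl(B(U_1\cap U_2)\bigr)\); lifting~\(c\) to \(d\in B(U_1\cap U_2)\) and passing to \((b_1+d,\,b_2-d)\) makes the two summands \(X\)\nb-equivariantly compatible, so \((a,b)\in E(U_1)+E(U_2)\). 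The same lifting, applied to a single~\(U\), shows that the projection \(\pi\colon E\to A\), \((a,b)\mapsto a\), satisfies \(\pi(E(U))=A(U)\) (using \(\varphi(A(U))\subseteq C(U)=p(B(U))\)), while \(\ker\pi\cap E(U)=0\oplus(I\cap B(U))=I(U)\); as \(\pi\) is onto with kernel~\(I\), this identifies \(I\into E\prto A\) as an extension over~\(X\) once the lattice conditions for~\(E\) are in place. All of this uses only the defining lattice conditions for \(A,B,C\) together with the \(X\)\nb-equivariance of~\(\varphi\) and \(C(U)=p(B(U))\), hence applies in the quasi case as well.

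The main obstacle is the countable-union law~(3), \(E(\bigcup_n U_n)=\overline{\sum_n E(U_n)}\), since this is exactly where genuine and quasi behaviour part ways. I would prove it by a diagram chase along \(I\into E\xrightarrow{\pi}A\). Put \(W=\bigcup_n U_n\); the inclusion \(\overline{\sum_n E(U_n)}\subseteq E(W)\) is clear. For the reverse, apply~\(\pi\): images of closed ideals under a quotient \Star{}homomorphism are closed, so \(\pi\bigl(\overline{\sum_n E(U_n)}\bigr)=\overline{\sum_n\pi(E(U_n))}=\overline{\sum_n A(U_n)}=A(W)\) by~(3) for~\(A\). Hence any \((a,b)\in E(W)\) has \(\pi(e')=a\) for some \(e'\in\overline{\sum_n E(U_n)}\), and then \((a,b)-e'\in\ker\pi\cap E(W)=I\cap B(W)=I(W)\). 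Since \(I(U_n)\subseteq E(U_n)\), it suffices to know \(I(W)=\overline{\sum_n I(U_n)}\), i.e.\ condition~(3) for the ideal \(I=\ker p\); then \((a,b)-e'\in\overline{\sum_n E(U_n)}\) and we are done.

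Thus everything reduces to~(3) for~\(A\) (a standing hypothesis) and for~\(I\). In the fully genuine case~(3) for~\(I\) is free: \(I\) is a closed ideal of~\(B\), and the infinite distributive law in the frame \(\Ideals(B)\) gives \(I\cap B(W)=I\cap\bigvee_n B(U_n)=\bigvee_n\bigl(I\cap B(U_n)\bigr)=\overline{\sum_n I(U_n)}\), where \(B(W)=\bigvee_n B(U_n)\) is~(3) for~\(B\). The delicate point in the quasi case is precisely that \(B(W)\) may be strictly larger than \(\bigvee_n B(U_n)\), so this last identity is the only thing that is not automatic; I expect it to be the crux, and it is what must be checked for the kernels that actually arise (for instance \(I=\Cont_0(T,D)\subseteq\Cont_\bo(T,D)=B\), where the decay at infinity restores~(3) for~\(I\)) in order to conclude that \(E\) is a genuine \(\Cst\)\nb-algebra over~\(X\) even though \(B\) and~\(C\) are not.
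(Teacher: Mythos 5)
Your proposal is correct and follows essentially the same route as the paper: check the lattice axioms directly, prove the finite-union law by lifting through \(p\) and correcting the mismatch, and deduce the countable-union law from exactness of \(I(U)\into E(U)\prto A(U)\) together with condition~(3) for \(A\) and for~\(I\) (the paper packages this last step as exactness of the \(\Cst\)\nb-inductive-limit functor, where you chase elements directly, and corrects the finite-union mismatch by elements of \(I(U_1)+I(U_2)\) rather than by a lift from \(B(U_1\cap U_2)\), but these are cosmetic differences). Your closing observation about the quasi case is also consistent with the paper: its proof explicitly uses that \(I\), like \(A\), is a genuine \(\Cst\)\nb-algebra over~\(X\), so genuineness of the kernel is precisely the standing hypothesis (satisfied by \(\Cont_0(T,B)\) in the intended application) under which the quasi version of the lemma is asserted.
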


\begin{proof}
  Recall that for a quasi \(\Cst\)\nb-algebra~$B$ over~\(X\), the map $U\mapsto B(U)$ preserves only finite suprema in general.  The map $U\mapsto E(U)$ is obviously order-preserving.  The conditions $E(\emptyset)=0$, $E(X)=E$ and $E(U_1\cap U_2)=E(U_1)\cap E(U_2)$ are easily verified.  Let us show that $E(U_1\cup U_2) \subset E(U_1)+E(U_2)$, the reverse inclusion being obvious.  Let $(a,b)\in E(U_1\cup U_2)$.  Then $a\in A(U_1\cup U_2)=A(U_1)+A(U_2)$ and hence there are $a_i\in A(U_i)$, $i=1,2$ such that $a=a_1+a_2$.  Since~$\varphi$ is $X$\nb-equivariant, $\varphi(a_i)\in C(U_i)$ and hence there are $b_i\in B(U_i)$ such that $p(b_i)=\varphi(a_i)$, $i=1,2$.  It follows that $b_1+b_2-b\in B(U_1\cup U_2)$ and $p(b_1+b_2-b)=\varphi(a_1)+\varphi(a_2)-\varphi(a)=0$.  Therefore, $b_1+b_2-b \in I\cap B(U_1\cup U_2)=I(U_1\cup U_2)=I(U_1)+I(U_2)$.  This shows that there are $x_i\in I(U_i)$, $i=1,2$, such that $b_1+b_2-b=x_1+x_2$.  It follows that $(a_i,b_i-x_i)\in E(U_i)$ and $(a,b)=(a_1,b_1-x_1)+(a_2,b_2-x_2)$.

  It remains to show that \(E\bigl( \bigcup U_n\bigr)\) is the closure of \(\bigcup E(U_n)\) for any increasing sequence~\((U_n)\) in \(\Open(X)\).  The sequence of \(\Cst\)\nb-algebras
  \[
  I(U) \into E(U) \prto A(U)
  \]
  is exact for each open set~\(U\).  Since \(A\) and~\(I\) are \(\Cst\)\nb-algebras over~\(X\),
  \begin{gather*}
    A(U) = \cl{\bigcup A(U_n)} = \varinjlim A(U_n),\\
    I(U) = \cl{\bigcup I(U_n)} = \varinjlim I(U_n).
  \end{gather*}
  Since the \(\Cst\)\nb-algebra inductive limit functor is exact, we get another extension of \(\Cst\)\nb-algebras
  \[
  I(U)\into \cl{\bigcup E(U_n)} \prto A(U)
  \]
  because \(\varinjlim E(U_n) = \cl{\bigcup E(U_n)}\).  This implies that \(E(U)\) is the supremum of \(\bigl\{E(U_n)\}\), so that~\(E\) is a \(\Cst\)\nb-algebra over~\(X\).
\end{proof}

\begin{theorem}
  \label{the:equivariant_E_universal}
  The equivariant \(\E\)\nb-theory defined above carries a composition product and hence yields a category~\(\Ecat(X)\).  The canonical functor from the category \(\Cstarsep(X)\) of separable \(\Cst\)\nb-algebras over~\(X\) to~\(\Ecat(X)\) is the universal half-exact, \(\Cst\)\nb-stable homotopy functor.
\end{theorem}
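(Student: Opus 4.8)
The plan is to carry the Connes--Higson construction from~\cite{Connes:NCG} over to the equivariant setting, feeding in the three equivariant ingredients prepared above: the composition product of Proposition~\ref{prop:composition_ah_X}, the approximate $X$\nb-equivariance of the canonical asymptotic morphism attached to an extension (Proposition~\ref{proposition:extensions_X}), and the good behaviour of extensions over~$X$ under pullback, \emph{including the quasi case} (Lemma~\ref{lemma:pullbacKs_are_OK}). None of the analytic heavy lifting differs from the non-equivariant theory; the work is to check that every auxiliary algebra and map that occurs stays within $\Cst$\nb-algebras over~$X$. First I would build the category~$\Ecat(X)$: applying Proposition~\ref{prop:composition_ah_X} to the suspended, stabilised algebras $\Cont_0(\R,A)\otimes\Comp$ and their analogues yields the bilinear composition product
\[
\E_*(X;A,B)\times\E_*(X;B,C)\to\E_*(X;A,C),
\]
and associativity, compatibility with the orthogonal direct sum, and the unit property of the class of the identity \Star{}homomorphism all descend from the corresponding facts for homotopy classes of asymptotic morphisms. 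This produces $\Ecat(X)$ together with the canonical functor $\iota\colon\Cstarsep(X)\to\Ecat(X)$, the identity on objects, sending an $X$\nb-equivariant \Star{}homomorphism to the class of the associated constant asymptotic morphism.

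Next I would confirm that $\iota$ is a half-exact, $\Cst$\nb-stable homotopy functor. Homotopy invariance and $\Cst$\nb-stability are nearly immediate: the defining formula passes to homotopy classes and already tensors with~$\Comp$, while tensoring with~$\Comp$ preserves the ideal lattice via $U\mapsto A(U)\otimes\Comp$, so the corner embedding is inverted by the usual stabilisation argument carried out $X$\nb-equivariantly. For half-exactness, given an extension $I\into A\prto B$ over~$X$, I would take the canonical asymptotic morphism $\Cont_0\bigl((0,1),B\bigr)\to I$, which is approximately $X$\nb-equivariant by Proposition~\ref{proposition:extensions_X}, as the boundary class, and then run the Connes--Higson mapping-cone argument; every mapping cone and pullback occurring there remains an extension of $\Cst$\nb-algebras over~$X$ by Proposition~\ref{proposition:extensions_X} and Lemma~\ref{lemma:pullbacKs_are_OK}.

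The heart of the theorem is the universal property. Given any half-exact, $\Cst$\nb-stable homotopy functor $F\colon\Cstarsep(X)\to\Cat$, the decisive step, following Higson, is to extend $F$ to approximately $X$\nb-equivariant asymptotic morphisms: to each such $\varphi\colon A\to B$ with induced \Star{}homomorphism $\dot\varphi\colon A\to B_\infty$ one associates the pullback of the extension $\Cont_0(T,B)\into\Cont_\bo(T,B)\prto B_\infty$ along~$\dot\varphi$,
\[
\Cont_0(T,B)\into E_\varphi\prto A.
\]
Although $\Cont_\bo(T,B)$ and~$B_\infty$ are only \emph{quasi} $\Cst$\nb-algebras over~$X$, Lemma~\ref{lemma:pullbacKs_are_OK} guarantees that $E_\varphi$ is a genuine $\Cst$\nb-algebra over~$X$ and that the displayed sequence is an extension over~$X$; moreover $E_\varphi$ is separable, being an extension of the separable algebras $A$ and~$\Cont_0(T,B)$. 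Since $\Cont_0(T,B)$ is contractible, half-exactness forces $F(E_\varphi)\xrightarrow{\cong}F(A)$, and composing the inverse of this isomorphism with $F$ of the evaluation \Star{}homomorphism $E_\varphi\to B$, $(a,f)\mapsto f(0)$, produces a morphism $F(\varphi)\colon F(A)\to F(B)$. As in~\cite{Connes:NCG} one checks that $F(\varphi)$ depends only on the homotopy class of~$\varphi$, is compatible with the composition product, and reduces to $F(f)$ when $\varphi$ is the constant morphism of a \Star{}homomorphism~$f$ --~the quotient $E_\varphi\prto A$ being split in that case by $a\mapsto\bigl(a,f(a)\bigr)$ with $f(a)$ read as a constant function. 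With $F$ thus defined on asymptotic morphisms, the factorisation $\bar F\colon\Ecat(X)\to\Cat$ with $\bar F\circ\iota=F$ follows from $\Cst$\nb-stability together with the automatic Bott periodicity of half-exact $\Cst$\nb-stable homotopy functors, exactly as non-equivariantly; uniqueness holds because $\iota$ is bijective on objects and every morphism of $\Ecat(X)$ is represented by an asymptotic morphism, whose image under $\bar F$ is forced by the construction.

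I expect the main obstacle to be precisely this extension of $F$ to asymptotic morphisms together with the proof that it is functorial: the one place where the equivariant theory could break down is the $X$\nb-structure of the auxiliary algebra~$E_\varphi$ and of the cone and mapping-cone algebras in the half-exactness step. This is exactly what the quasi $\Cst$\nb-algebra version of Lemma~\ref{lemma:pullbacKs_are_OK} and the equivariance statement of Proposition~\ref{proposition:extensions_X} are designed to supply; granting these, the homotopy-, composition- and suspension-compatibilities are the same bookkeeping as in the non-equivariant case.
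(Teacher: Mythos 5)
Your proposal is correct and takes essentially the same approach as the paper: the composition product from Proposition~\ref{prop:composition_ah_X}, exactness via the mapping cone together with Proposition~\ref{proposition:extensions_X} and Lemma~\ref{lemma:pullbacKs_are_OK}, and universality via the pullback of the quasi extension \(\Cont_0(T,B)\into\Cont_\bo(T,B)\prto B_\infty\) along \(\dot\varphi\), which is precisely the content of the paper's Lemma~\ref{lemma:universality_E_X}. The only difference is one of packaging: the paper records that pullback construction as the factorisation \([\varphi]=[\rho]\circ[\pi]^{-1}\) and then cites the standard non-equivariant argument of Blackadar, whereas you unfold that same argument explicitly by extending a given half-exact, \(\Cst\)\nb-stable homotopy functor to asymptotic morphisms.
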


\begin{proof}
  The composition product is described in Proposition~\ref{prop:composition_ah_X}.  The same argument as in the non-equivariant case shows that it is associative.  The functor \(\Cstarsep(X)\to\Ecat(X)\) is a \(\Cst\)\nb-stable homotopy functor by definition.  Next we check its exactness.

  Let \(I \into E \overset{p}\prto Q\) be an extension of \(\Cst\)\nb-algebras over \(X\).  The cone
  \begin{alignat*}{2}
    C_p&\defeq \{(f,a)\in \Cont_0((0,1],Q)\oplus E: f(1)=p(a)\},\\
    C_p(U)&\defeq\bigl(\Cont_0((0,1],Q(U))\oplus E(U) \bigr)\cap C_p&\qquad&\text{for \(U\in\Open(X)\)},
  \end{alignat*}
  is a \(\Cst\)\nb-algebra over~\(X\) by Lemma~\ref{lemma:pullbacKs_are_OK}.  The asymptotic morphism \(\gamma_t\colon SC_p\to SI\) induced by the extension \(SI \into CE \prto C_p\) is approximately \(X\)\nb-equivariant.  There is a natural \(X\)\nb-equivariant inclusion \(i\colon I\to C_p\), \(i(a)=(0,a)\).  The proof of \cite{Dadarlat:Asymptotic}*{Theorem~13} with no essential change yields that~\(\gamma\) is a homotopy inverse of~\(Si\), that is, \(\asm{\gamma\circ Si} = \asm{\id_{SI}}\) and \(\asm{Si\circ \gamma} = \asm{\id_{SC_p}}\).  As in the non-equivariant case, this excision result and Proposition~\ref{prop:composition_ah_X} show that \(\E_0(X;A,B) \defeq \asm{SA\otimes \Comp, SB\otimes \Comp}\) is a periodic exact functor in both variables \(A\) and~\(B\), that is, if \(I\into E\prto Q\) is an extension in \(\Cstarsep(X)\) and~\(B\) is a separable \(\Cst\)\nb-algebra over~\(X\), then there are six-term exact sequences
  \[
  \xymatrix{
    \E_0(X;Q,B)\ar[r]&
    \E_0(X;E,B)\ar[r]&
    \E_0(X;I,B)\ar[d]^\partial\\
    \E_1(X;I,B)\ar[u]^\partial&
    \E_1(X;E,B)\ar[l]&
    \E_1(X;Q,B)\ar[l]
  }
  \]
  and
  \[
  \xymatrix{
    \E_0(X;B,I)\ar[r]&
    \E_0(X;B,E)\ar[r]&
    \E_0(X;B,Q)\ar[d]^\partial\\
    \E_1(X;B,Q)\ar[u]^\partial&
    \E_1(X;B,E)\ar[l]&
    \E_1(X;B,I).\ar[l]
  }
  \]
  The horizontal maps in both exact sequences are induced by the given maps \(I\to E\to Q\), and the vertical maps are, up to signs, products with the class of the approximately \(X\)\nb-equivariant asymptotic morphism associated to the extension as in Proposition \ref{proposition:extensions_X}.

  It remains to verify universality.  Again this is similar to the proof of the non-equivariant case in \cite{Blackadar:K-theory}*{Theorem~25.6.1}, using Lemma~\ref{lemma:universality_E_X} below as a substitute for \cite{Blackadar:K-theory}*{Proposition~25.6.2}.
\end{proof}

\begin{lemma}
  \label{lemma:universality_E_X}
  Any element of \(\E_0(X;A,B)\) may be written as \([\rho]\circ [\pi]^{-1}\) for \(X\)\nb-equivariant \Star{}homomorphisms \(\rho\) and~\(\pi\).
\end{lemma}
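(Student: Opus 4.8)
The plan is to imitate the non-equivariant factorization (\cite{Blackadar:K-theory}*{Proposition~25.6.2}), realising the given class by the two coordinate projections of a mapping-path pullback, and to use the earlier lemmas to keep everything inside the category of \Cst-algebras over~$X$. Write $A'\defeq\Cont_0(\R,A)\otimes\Comp$ and $B'\defeq\Cont_0(\R,B)\otimes\Comp$, so that the given element is the homotopy class of an approximately $X$\nb-equivariant asymptotic morphism $\varphi\colon A'\to\Cont_\bo(T,B')$; by Bartle--Graves we may assume $\varphi$ uniformly continuous. It induces a \Star{}homomorphism $\dot\varphi\colon A'\to B'_\infty\defeq\Cont_\bo(T,B')/\Cont_0(T,B')$, and Lemma~\ref{lemma:on_X_equiv} (condition~\eqref{eq:def1_ah_X_equiv}) says precisely that $\dot\varphi$ is $X$\nb-equivariant for the quasi \Cst-algebra structure on~$B'_\infty$ described in Section~\ref{sec:Cstar_over_X}.

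First I would form the pullback of $\dot\varphi$ against the quotient map $q\colon\Cont_\bo(T,B')\prto B'_\infty$,
\[
D\defeq\{(a,f)\in A'\oplus\Cont_\bo(T,B'): \dot\varphi(a)=q(f)\},
\]
with coordinate projections $\pi\colon D\to A'$ and $\mathrm{pr}\colon D\to\Cont_\bo(T,B')$. Here $\Cont_\bo(T,B')$ and $B'_\infty$ are only quasi \Cst-algebras over~$X$, but this is exactly the generality covered by Lemma~\ref{lemma:pullbacKs_are_OK}: applied to the extension $\Cont_0(T,B')\into\Cont_\bo(T,B')\overset{q}\prto B'_\infty$ and the $X$\nb-equivariant map $\dot\varphi$, it shows that $D$ is an honest separable \Cst-algebra over~$X$ and that $\Cont_0(T,B')\into D\overset{\pi}\prto A'$ is an extension of \Cst-algebras over~$X$. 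The ideal $\Cont_0(T,B')$ is $X$\nb-equivariantly contractible (contract along the $T$\nb-coordinate, which does not move any of the ideals $\Cont_0(T,B'(U))$), hence $\E$\nb-trivial over~$X$; by the six-term exact sequences already established in the proof of Theorem~\ref{the:equivariant_E_universal} the class $[\pi]$ is therefore invertible in $\Ecat(X)$. Setting $\rho\defeq\mathrm{ev}_0\circ\mathrm{pr}\colon D\to B'$ gives an $X$\nb-equivariant \Star{}homomorphism, since $(a,f)\in D(U)$ forces $f\in\Cont_\bo(T,B'(U))$ and hence $f(0)\in B'(U)$.

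It remains to check $[\varphi]=[\rho]\circ[\pi]^{-1}$, equivalently $[\varphi]\circ[\pi]=[\rho]$ as homotopy classes of asymptotic morphisms $D\to B'$. The composite $\varphi\circ\pi$ sends $(a,f)$ to $t\mapsto\varphi_t(a)$; since $q(f)=\dot\varphi(a)=q(\varphi(a))$ we have $f(t)-\varphi_t(a)\to0$, so $\varphi\circ\pi$ is equivalent to the asymptotic morphism $\Psi_t(a,f)\defeq f(t)$. Finally $\Psi$ is homotopic to~$\rho$ through the asymptotic morphism $H\colon D\to\Cont([0,1],B')$, $H_t(a,f)\defeq(\lambda\mapsto f(\lambda t))$: each $H_t$ is a genuine \Star{}homomorphism factoring through $\mathrm{pr}$, its endpoints are $H_t|_{\lambda=1}=\Psi_t$ and $H_t|_{\lambda=0}=\rho_t$, and it is $X$\nb-equivariant because $f\in\Cont_\bo(T,B'(U))$ implies $f(\lambda t)\in B'(U)$. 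Hence $[\rho]=[\Psi]=[\varphi]\circ[\pi]$, as desired.

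The analytic content is thus identical to the non-equivariant case; the only real work is the equivariance bookkeeping, and the main point --~which is why Lemma~\ref{lemma:pullbacKs_are_OK} was proved for quasi \Cst-algebras~-- is that the pullback against the quasi \Cst-algebras $\Cont_\bo(T,B')$ and $B'_\infty$ is nonetheless an honest \Cst-algebra over~$X$ with $\pi$ an $\E(X)$\nb-equivalence. I expect the one step needing care to be the verification that $\dot\varphi$ is genuinely $X$\nb-equivariant into $B'_\infty$, which is exactly the reformulation of approximate $X$\nb-equivariance in Lemma~\ref{lemma:on_X_equiv}.
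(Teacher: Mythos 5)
Your proof is correct and follows essentially the same route as the paper's: you form the same pullback of $\dot\varphi$ along $\Cont_\bo(T,B')\prto B'_\infty$, invoke Lemma~\ref{lemma:pullbacKs_are_OK} to make it an extension of $\Cst$\nb-algebras over~$X$ with contractible ideal $\Cont_0(T,B')$, and take $\rho$ to be evaluation at~$0$ of the second coordinate, with the same equivalence/homotopy argument identifying $[\varphi]\circ[\pi]$ with $[\rho]$. The only differences are cosmetic: you spell out the reparametrisation homotopy $H_t(a,f)=(\lambda\mapsto f(\lambda t))$ that the paper leaves implicit, and you cite the quasi-$\Cst$\nb-algebra structure on $B'_\infty$ from Section~\ref{sec:Cstar_over_X} rather than rederiving it via Bartle--Graves as the paper does.
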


\begin{proof}
  Let \(\varphi\colon A\to \Cont_\bo(T,B)\) be an approximately \(X\)\nb-equivariant asymptotic morphism.  We shall use Lemma~\ref{lemma:pullbacKs_are_OK} to show that the \(\Cst\)\nb-algebra
  \[
  E\defeq \{(a,b)\in A \oplus \Cont_\bo(T,B):
  \varphi(a)-b \in \Cont_0(T,B)\},
  \]
  becomes a \(\Cst\)\nb-algebra over~\(X\) by
  \[
  E(U) \defeq E\cap \bigl(A(U)\oplus \Cont_\bo(T,B(U))\bigr).
  \]
  As a consequence of the Bartle--Graves Theorem, for any two closed two-sided ideals $J_1$ and~$J_2$ in a $\Cst$\nb-algebra~$D$, $\Cont_b\bigl(T,J_1+J_2\bigr)=\Cont_b\bigl(T,J_1\bigr)+\Cont_b\bigl(T,J_2\bigr)$.  From this we see that
  \[
  \Cont_0\bigl(T,B\bigr)\into \Cont_b\bigl(T,B\bigr)\prto \Cont_b\bigl(T,B\bigr)/\Cont_0\bigl(T,B\bigr)=B_\infty
  \]
  is an extension of quasi \(\Cst\)\nb-algebras over~$X$.  By Lemma~\ref{lemma:pullbacKs_are_OK}, its pullback under the $X$\nb-equivariant \Star{}homomorphism $\dot{\varphi}\colon A\to B_\infty$ is an extension of \(\Cst\)\nb-algebras over~\(X\):
  \[
  \Cont_0(T,B) \into E\overset{\pi}\prto A
  \]
  with \(\pi(a,b)\defeq \varphi(a)\).  The map~\(\pi\) becomes an isomorphism in \(\Ecat(X)\) because \(\Cont_0(T,B)\) is contractible over~\(X\).  Let \(\rho'\colon E \to \Cont_\bo(T,B)\) be the \Star{}homomorphism \(\rho'(a,b)=b\).  When regarded as an asymptotic morphism from~\(E\) to~\(B\), \(\rho'\) is homotopic to the constant asymptotic morphism \(\rho (a,b)=b(0)\).  We have \([\varphi]\circ [\pi]=[\rho']\) because \(\varphi\bigl(\pi(a,b)\bigr)-\rho'(a,b)\in \Cont_0(T,B)\) for all \((a,b)\in E\).  Hence \([\varphi]=[\rho]\circ [\pi]^{-1}\).
\end{proof}

\subsection{Further properties of \texorpdfstring{$\E$}{E}-theory}
\label{sec:further_properties}

Like the category \(\KKcat(X)\), the category \(\Ecat(X)\) carries the additional structure of a triangulated category (see \cites{Meyer-Nest:BC, Neeman:Triangulated}).  As in \(\KK\)-theory, the translation automorphism is the suspension functor \(A\mapsto SA\defeq \Cont_0\bigl((0,1),A\bigr)\), and a triangle is exact if it is isomorphic to the mapping cone triangle of some \(X\)\nb-equivariant \Star{}homomorphism.

\begin{theorem}
  \label{the:E-triangulated}
  The category \(\Ecat(X)\) is triangulated.
\end{theorem}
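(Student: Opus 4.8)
The plan is to follow the template used for non\nb-equivariant \(\E\)\nb-theory in \cite{Connes-Higson:Deformations} and for \(\KK(X)\) in \cite{Meyer-Nest:BC}, verifying the axioms of a triangulated category in the form of \cite{Neeman:Triangulated} directly, with the genuinely equivariant content absorbed into two structural facts already proved: every morphism is representable by honest \(X\)\nb-equivariant \Star{}homomorphisms (Lemma~\ref{lemma:universality_E_X}), and mapping cones of such homomorphisms are again \(\Cst\)\nb-algebras over~\(X\) (Lemma~\ref{lemma:pullbacKs_are_OK}).  First I would record the additive structure: \(\E_0(X;A,B)\) is an Abelian group and composition is bilinear by Definition~\ref{def:E_X} and Proposition~\ref{prop:composition_ah_X}, while \(A\oplus B\) is a biproduct, so \(\Ecat(X)\) is additive.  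The translation functor is the suspension~\(S\); the six\nb-term exact sequences established in the proof of Theorem~\ref{the:equivariant_E_universal} encode Bott periodicity, i.e.\ a natural isomorphism \(S^2\cong\id\), so \(S\) is an equivalence that is its own inverse — all the invertibility the axioms need, after replacing \(S\) by an isomorphic automorphism if one insists on Neeman's exact formulation.

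Next I would fix the exact triangles.  For an \(X\)\nb-equivariant \Star{}homomorphism \(f\colon A\to B\), the mapping cone \(C_f\defeq\{(a,\xi)\in A\oplus\Cont_0((0,1],B):f(a)=\xi(1)\}\) is the pullback of the cone extension \(SB\into\Cont_0((0,1],B)\prto B\) along~\(f\); hence, by Lemma~\ref{lemma:pullbacKs_are_OK}, it is a \(\Cst\)\nb-algebra over~\(X\) with \(C_f(U)=C_f\cap(A(U)\oplus\Cont_0((0,1],B(U)))\), and it sits in an extension \(SB\into C_f\prto A\) over~\(X\).  I declare a triangle exact if it is isomorphic in \(\Ecat(X)\) to the mapping cone triangle \(SB\to C_f\to A\xrightarrow{f}B\) of some such~\(f\); the connecting map is the class of the associated asymptotic morphism from Proposition~\ref{proposition:extensions_X}, and its exactness as a functor is precisely the half\nb-exactness recorded in Theorem~\ref{the:equivariant_E_universal}.

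The axioms then proceed as follows.  For (TR1), an arbitrary morphism is \([\rho]\circ[\pi]^{-1}\) with \(\pi,\rho\) \Star{}homomorphisms by Lemma~\ref{lemma:universality_E_X}; since \([\pi]\) is invertible it suffices to complete the honest homomorphism~\(\rho\) to a triangle, which its mapping cone does, and then transport along the isomorphism \([\pi]^{-1}\) (isomorphic triangles are exact by fiat).  The rotation axiom (TR2) reduces to the standard homotopy equivalence identifying the mapping cone of the projection \(C_f\to A\) with~\(SB\); the relevant homotopies only reparametrise the cone coordinate in \((0,1]\) and therefore automatically respect the ideals \(A(U)\), so their \(X\)\nb-equivariance is free.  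Axiom (TR3) I would deduce from half\nb-exactness: applying the (co)homological functors \(\E_*(X;D,\blank)\) and \(\E_*(X;\blank,D)\) to the two triangles produces long exact sequences, and a Five\nb-Lemma argument yields the required fill\nb-in map.

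The hard part will be the octahedral axiom (TR4).  Here I would use the classical argument that, for composable \Star{}homomorphisms \(A\xrightarrow{f}B\xrightarrow{g}C\), the three mapping cones \(C_f\), \(C_{gf}\), \(C_g\) fit into an extension \(C_f\into C_{gf}\prto C_g\) (up to \(X\)\nb-equivariant homotopy equivalence) that assembles into the octahedron.  The only equivariant subtlety is to check that every algebra appearing in this diagram is a \(\Cst\)\nb-algebra over~\(X\) and that each comparison map can be chosen \(X\)\nb-equivariant; both are guaranteed by Lemma~\ref{lemma:pullbacKs_are_OK} and by the naturality of the cone constructions, whose ideal structure is built functorially from that of \(A\), \(B\) and~\(C\).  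Granting these, the verification is word\nb-for\nb-word the non\nb-equivariant one in \cite{Blackadar:K-theory} and \cite{Meyer-Nest:BC}, and I expect no new analytic difficulty beyond the bookkeeping of the ideal structure.
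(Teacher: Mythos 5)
Your overall architecture coincides with the paper's proof: the paper likewise takes exact triangles to be those isomorphic to mapping cone triangles of $X$\nb-equivariant \Star{}homomorphisms, states that the argument is essentially the same as in the appendix of \cite{Meyer-Nest:BC}, and isolates exactly one axiom as needing new input --- that every $\varphi\in\E_0(X;A,B)$ embeds in an exact triangle --- which it settles with the factorisation $\varphi=[\rho]\circ[\pi]^{-1}$ of Lemma~\ref{lemma:universality_E_X} and the invertibility of $[\pi]$, precisely as you do for (TR1). Your points about the additive structure, the suspension, and the automatic $X$\nb-equivariance of the cone reparametrisations are all consistent with that.

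However, your argument for (TR3) would fail as stated. The Five Lemma cannot produce the fill-in morphism: it presupposes a morphism of long exact sequences and only concludes that certain \emph{given} maps are isomorphisms. What half-exactness actually gives is a partial lifting: applying $\E_*(X;C_f,\blank)$ to the target triangle and using that the composite $C_f\to A\xrightarrow{f}B$ is null-homotopic, one obtains some $\gamma\colon C_f\to C_{f'}$ making the square with the cone projections commute; but nothing forces the second square, involving the inclusions $SB\to C_f$ and $SB'\to C_{f'}$, to commute for that choice of $\gamma$, and it cannot in general be corrected by a diagram chase in the Hom-sequences alone --- this is exactly why (TR3) is an axiom rather than a consequence of the Puppe sequences. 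The correct routes are either (i) to reduce, via Lemma~\ref{lemma:universality_E_X} again, to the case where the vertical maps are honest $X$\nb-equivariant \Star{}homomorphisms whose square commutes up to homotopy, and then construct $\gamma$ explicitly from a chosen homotopy using the functoriality of the mapping cone (this is what the argument of \cite{Meyer-Nest:BC} invoked by the paper does, and Lemma~\ref{lemma:pullbacKs_are_OK} keeps everything over $X$), or (ii) to recall that (TR3) is a formal consequence of (TR1), (TR2) and the octahedral axiom (see \cite{Neeman:Triangulated}), so that your treatment of (TR4), once completed, already yields it. With this one repair your proposal is the paper's proof.
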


\begin{proof}
  The argument is essentially the same as in the appendix of~\cite{Meyer-Nest:BC}.  The only axiom that requires a different treatment is the one that requires each \(\varphi\in\E_0(X;A,B)\) to embed in an exact triangle.  Here we use the factorisation \(\varphi= [\rho]\circ[\pi]^{-1}\) of Lemma~\ref{lemma:universality_E_X} with \(X\)\nb-equivariant \Star{}homomorphisms \(\rho\colon E\to B\) and \(\pi\colon E\to A\).  Since~\([\pi]\) is invertible in \(\E\)\nb-theory, the mapping cone triangle
  \[
  SB \to C_\rho \to E \xrightarrow{\rho} B
  \]
  is isomorphic to an exact triangle \(SB \to C_\rho \to A\xrightarrow{[\varphi]} B\).
\end{proof}

The proof that \(\E\)\nb-theory is exact shows that any extension \(I\into E\prto Q\) of \(\Cst\)\nb-algebras over~\(X\) gives rise to an exact triangle \(SQ\to I\to E\to Q\), where the map \(SQ\to I\) is the Connes--Higson construction (see Proposition~\ref{proposition:extensions_X}) and the maps \(I\to E\to Q\) are the given ones.  Such triangles are called \emph{extension triangles}.  This works for all extensions, so that we need no admissibility assumption as in \(\KKcat(X)\).

Since there is no admissibility hypothesis, several constructions in Kasparov theory simplify in \(\E\)\nb-theory.  For instance, the colimit \(\varinjlim {}(A_n,\varphi_n)\) of any inductive system \(\varphi_n\colon A_n\to A_{n+1}\), \(n\in\N\), in \(\Cstarsep(X)\) is also a homotopy colimit in \(\Ecat(X)\), by the argument in \cite{Meyer-Nest:BC}*{Section 2.4}.

\begin{proposition}
 \label{prop:Milnor_regular}
 If~$A$ is the inductive limit of an inductive system \((A_n,\varphi_n)\) in \(\Cstarsep(X)\), then there is a natural short exact sequence
 \[
 0\to \varprojlim\nolimits^1 {}\E_1(X;A_n,B) \to \E(X;A,B)\to \varprojlim {}\E(X;A_n,B)\to 0.
 \]
\end{proposition}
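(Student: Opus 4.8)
The plan is to realise~\(A\) as a homotopy colimit in the triangulated category~\(\Ecat(X)\) and then to apply the cohomological functor \(\E_*(X;\blank,B)\) to the defining (mapping telescope) triangle of that homotopy colimit. By the remark preceding the statement, \(A=\varinjlim(A_n,\varphi_n)\) is a homotopy colimit in~\(\Ecat(X)\), so it sits inside an exact triangle
\[
\bigoplus_{n\in\N} A_n \xrightarrow{\ \id-S\ } \bigoplus_{n\in\N} A_n \longrightarrow A \longrightarrow S\Bigl(\bigoplus_{n\in\N} A_n\Bigr),
\]
where \(\bigoplus_n A_n\) is the (\(c_0\)\nb-)direct sum, the coproduct of the family \((A_n)\) in \(\Cstarsep(X)\), and~\(S\) is the shift determined by the structure maps~\(\varphi_n\).

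First I would record that \(\E_*(X;\blank,B)\) is a cohomological functor on~\(\Ecat(X)\): this is precisely the six\nb-term exact sequence in the first variable obtained in the proof of Theorem~\ref{the:equivariant_E_universal}, read off through the triangulated structure of Theorem~\ref{the:E-triangulated}. Next I would note that this functor turns the countable coproduct into a product,
\[
\E_*\Bigl(X;\bigoplus_{n\in\N} A_n, B\Bigr)\cong \prod_{n\in\N}\E_*(X;A_n,B),
\]
since a morphism out of a coproduct is a product of morphisms. Applying \(\E_*(X;\blank,B)\) to the triangle and using \(\E_*\bigl(X;S(\bigoplus_n A_n),B\bigr)=\E_{*+1}\bigl(X;\bigoplus_n A_n,B\bigr)\) then yields a long exact sequence whose relevant fragment is
\[
\prod_{n}\E_1(X;A_n,B) \xrightarrow{\ \id-S^*\ } \prod_{n}\E_1(X;A_n,B) \to \E(X;A,B) \to \prod_{n}\E(X;A_n,B) \xrightarrow{\ \id-S^*\ } \prod_{n}\E(X;A_n,B).
\]

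To conclude, I would invoke the standard algebraic description of the derived limit: for the inverse system \(\bigl(\E_*(X;A_n,B)\bigr)_n\) with transition maps~\(\varphi_n^*\), the kernel of \(\id-S^*\) on \(\prod_n\E_*(X;A_n,B)\) is \(\varprojlim_n\E_*(X;A_n,B)\) and its cokernel is \(\varprojlim^1_n\E_*(X;A_n,B)\). Splicing these identifications into the long exact sequence produces
\[
0 \to \varprojlim\nolimits^1\E_1(X;A_n,B) \to \E(X;A,B) \to \varprojlim\E(X;A_n,B) \to 0,
\]
with the outer maps induced by those of the long exact sequence; naturality in~\(B\) (and in the system) is inherited from that of the homotopy colimit triangle. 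The main obstacle I expect lies in the two inputs that make the triangle usable: that the \(c_0\)\nb-direct sum \(\bigoplus_n A_n\) genuinely represents the coproduct in~\(\Ecat(X)\), and that \(\E_*(X;\blank,B)\) carries it to the product. The first amounts to checking that an approximately \(X\)\nb-equivariant asymptotic morphism out of \(\bigoplus_n A_n\) is the same datum as a compatible family of such morphisms out of the summands sharing one asymptotic parameter, which requires controlling the reparametrisations~\(r_0\) of Lemma~\ref{lem:composition_X-equivariant} uniformly across~\(n\); once this is in place, the passage to products is formal.
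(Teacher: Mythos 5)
Your proposal is correct and takes essentially the same route as the paper: the paper's proof consists precisely of your two pillars, namely countable additivity of \(\E(X;\blank,B)\) (cited from Guentner--Higson--Trout, Proposition~7.1) and the mapping-telescope argument (cited from Blackadar, Section~21.3.2), and the homotopy colimit triangle you invoke from the remark preceding the proposition is exactly the triangulated form of that telescope construction. Your final identification of the kernel and cokernel of \(\id-S^*\) with \(\varprojlim\) and \(\varprojlim\nolimits^1\) is the same standard conclusion, so the two arguments differ only in packaging.
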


\begin{proof}
  The functor $A\mapsto \E(X;A,B)$ is seen to be countably additive as in the proof of \cite{Guentner-Higson-Trout:Equivariant_E}*{Proposition~7.1}.  Then we follow the standard argument based on mapping telescopes in \cite{Blackadar:K-theory}*{Section 21.3.2}.
\end{proof}

For locally compact Hausdorff spaces, we may compare our definition of equivariant \(\E\)\nb-theory with previous ones in \cites{Popescu:Equivariant_E, Park-Trout:Representable_E}.  Since we use the original Connes--Higson model of \(\E\)\nb-theory instead of iterated asymptotic algebras, this does not yet follow directly from Proposition~\ref{pro:compare_Popescu} and~\cite{Popescu:Equivariant_E}.

\begin{proposition}
  \label{pro:E_Hausdorff_compare}
  Let~\(X\) be Hausdorff and locally compact and let \(A\) and~\(B\) be \(\Cst\)\nb-algebras over~\(X\).  Then \(\E_*(X;A,B)\) is naturally isomorphic to \(\RE_*(X;A,B)\).
\end{proposition}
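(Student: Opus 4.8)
The plan is to reduce the statement to the classical equivalence, in the non\nb-equivariant theory, between the original Connes--Higson description of $\E$\nb-theory by homotopy classes of asymptotic morphisms and the description through the (iterated) asymptotic algebra $B_\infty=\Cont_\bo(T,B)/\Cont_0(T,B)$ used in \cites{Park-Trout:Representable_E,Popescu:Equivariant_E}. Since \cite{Popescu:Equivariant_E} already identifies $\RE_*(X;A,B)$ with the Park--Trout groups, and since Proposition~\ref{pro:compare_Popescu} matches the two notions of equivariance, the only point still to be settled is that the Connes--Higson model and the asymptotic-algebra model produce the same groups and the same composition products in the presence of the $X$\nb-structure.

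First I would identify the underlying pointed sets. Under the standard correspondence between asymptotic morphisms and \Star{}homomorphisms into the asymptotic algebra, an approximately $X$\nb-equivariant asymptotic morphism $\varphi\colon\Cont_0(\R,A)\otimes\Comp\to\Cont_\bo\bigl(T,\Cont_0(\R,B)\otimes\Comp\bigr)$ corresponds, up to equivalence, to the induced \Star{}homomorphism $\dot\varphi$ into $\bigl(\Cont_0(\R,B)\otimes\Comp\bigr)_\infty$, and by the proof of Proposition~\ref{pro:compare_Popescu} the approximate $X$\nb-equivariance of~$\varphi$ is equivalent to $\Cont_0(X)$\nb-linearity of~$\dot\varphi$ regarded as a map into $\Cont_0(X)\cdot\bigl(\Cont_0(\R,B)\otimes\Comp\bigr)_\infty$, which is exactly the equivariance hypothesis underlying $\RE_*(X;A,B)$. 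Because $\Cont([0,1],\,\cdot\,)$ preserves both the $\Cont_0(X)$\nb-structure and the equivalence of the two equivariance conditions, the homotopy relations agree as well, and both Abelian group structures are induced by the orthogonal direct sum on~$\Comp$. Thus, as pointed sets and as groups, $\E_*(X;A,B)$ and $\RE_*(X;A,B)$ consist of the same homotopy classes of $X$\nb-equivariant asymptotic morphisms.

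It then remains to match the two composition laws. In our model the product is the reparametrisation $\psi_{r(t)}\circ\varphi_t$ furnished by Proposition~\ref{prop:composition_ah_X} and Lemma~\ref{lem:composition_X-equivariant}, whereas the asymptotic-algebra model forms the composite by applying the asymptotic-algebra functor to~$\dot\psi$, composing with~$\dot\varphi$, and collapsing the resulting double asymptotic algebra back to a single one. That these two products agree up to homotopy is precisely the non\nb-equivariant comparison of the two models (\cite{Connes:NCG}*{Appendix~B of Chapter~II} and \cite{Guentner-Higson-Trout:Equivariant_E}); I would run that argument verbatim and verify, with the $\limsup$\nb-criterion of Lemma~\ref{lem:X-equivariance} and the equivalent form in Lemma~\ref{lemma:on_X_equiv}, that every \Star{}homomorphism and every homotopy occurring in it stays approximately $X$\nb-equivariant. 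This produces a natural isomorphism $\E_0(X;A,B)\cong\RE_0(X;A,B)$ compatible with the composition products, and the degree-one case follows from the normalisation $\E_1(X;A,B)=\E_0(X;\Cont_0(\R,A),B)$ together with the analogous convention for~$\RE$.

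The main obstacle is the bookkeeping forced by the fact that the asymptotic algebra $B_\infty$ is \emph{not} a $\Cont_0(X)$\nb-algebra: it satisfies conditions~(1) and~(2) but fails condition~(3), exactly as noted for $\Cont_\bo(T,B)$ and $B_\infty$ after Definition~\ref{def:Cstar_over_X}. One is therefore forced to replace $B_\infty$ throughout by the representable asymptotic algebra $B_\infty^X\defeq\Cont_0(X)\cdot B_\infty$ and to check that the collapsing \Star{}homomorphism on the iterated asymptotic algebra and the reparametrisations defining the product restrict correctly to these subalgebras and yield $\Cont_0(X)$\nb-linear maps. This compatibility of the two $X$\nb-structures through the equivalence of models is the genuine content of the proof, and it is supplied by Lemma~\ref{lem:composition_X-equivariant}, which is exactly the equivariant refinement of the reparametrisation lemma that the classical comparison argument otherwise lacks; once it is in place the remaining verifications are routine.
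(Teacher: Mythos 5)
Your proposal is correct in outline, but it takes a genuinely different and much longer road than the paper. The paper's own proof is essentially one sentence: by Theorem~\ref{the:equivariant_E_universal}, the functor \(\Cstarsep(X)\to\Ecat(X)\) is the universal half-exact, \(\Cst\)\nb-stable homotopy functor, and for locally compact Hausdorff~\(X\) the category \(\Cstarsep(X)\) is isomorphic to the category of separable \(\Cont_0(X)\)-algebras, on which \cite{Park-Trout:Representable_E} establishes exactly the same universal property for~\(\RE\); two functors with the same universal property are naturally isomorphic, so no model bookkeeping is needed at all. The paper records your route only as an alternative, namely Proposition~\ref{pro:compare_Popescu} combined with \cite{Park-Trout:Representable_E} --- and note that your second paragraph already completes that route: in the paper's framing it is Popescu's theory, not Park--Trout's, that is built from iterated asymptotic algebras, so Proposition~\ref{pro:compare_Popescu} (applied also with \(\Cont([0,1],B)\) as target to match homotopies) identifies representatives and homotopy classes verbatim, the resulting bijection is the identity on representatives, and naturality and additivity are automatic; comparing composition products is not needed for the stated isomorphism of groups. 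Your third and fourth paragraphs, which redo the equivariant comparison of the Connes--Higson model with the iterated-asymptotic-algebra model of \cites{Guentner-Higson-Trout:Equivariant_E,Popescu:Equivariant_E}, address precisely what the remark preceding the proposition flags as nontrivial; the plan is reasonable, but the claim that Lemma~\ref{lem:composition_X-equivariant} alone supplies it is optimistic: besides reparametrised composites one needs the equivariant analogues of the Guentner--Higson--Trout results that collapse iterated asymptotic algebras for separable sources, which is essentially the content of \cite{Popescu:Equivariant_E} rather than a verbatim rerun of the non-equivariant argument. In short, the universal-property argument buys brevity and model-independence, while your explicit comparison buys an identification that is visibly the identity on asymptotic morphisms and compatible with products, at the cost of substantial deferred verifications.
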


\begin{proof}
  Both theories satisfy the same universal property.  Alternatively, the statement follows from Proposition~\ref{pro:compare_Popescu} and~\cite{Park-Trout:Representable_E}.
\end{proof}

Recall that for a compact Hausdorff space~\(X\), there is a canonical isomorphism
\[
\KK_*(X;\Cont(X,A),B) \cong \KK_*(A,B)
\]
for any \(\Cst\)\nb-algebra~\(A\) and any \(\Cst\)\nb-algebra~\(B\) over~\(X\).  The same isomorphism holds in \(\E\)\nb-theory as well:

\begin{lemma}
  \label{lem:E_compact_Hausdorff}
  Let~\(X\) be a compact Hausdorff space.  Then
  \[
  \E_*(X;\Cont(X,A),B) \cong \E_*(A,B)
  \]
  for any \(\Cst\)\nb-algebra~\(A\) and any \(\Cst\)\nb-algebra~\(B\) over~\(X\).
\end{lemma}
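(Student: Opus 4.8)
The plan is to show that \emph{restriction to the constant functions} sets up the desired isomorphism, with inverse supplied by the \(\Cont(X)\)-module structure of~\(B\). First I would reduce to a statement about asymptotic morphisms. Write \(D\defeq\Cont_0(\R,A)\otimes\Comp\) and \(B'\defeq\Cont_0(\R,B)\otimes\Comp\); the latter is a \(\Cst\)\nb-algebra over~\(X\) with \(B'(U)=\Cont_0(\R,B(U))\otimes\Comp\), and since~\(X\) is compact there is a canonical identification \(\Cont_0(\R,\Cont(X,A))\otimes\Comp\cong\Cont(X,D)\) of \(\Cst\)\nb-algebras over~\(X\), the latter carrying the constant-field structure \(\Cont(X,D)(U)=\Cont_0(U,D)\). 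Thus \(\E_0(X;\Cont(X,A),B)=\asm{\Cont(X,D),B'}\), while \(\E_0(A,B)\) is the set of homotopy classes of (non-equivariant) asymptotic morphisms \(D\to B'\). It therefore suffices to produce a natural bijection between these two sets that is additive for orthogonal direct sums: applying it with~\(A\) replaced by \(\Cont_0(\R,A)\) then handles \(\E_1\), and naturality together with additivity promote it to the claimed graded group isomorphism.

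Because~\(X\) is compact Hausdorff, both \(\Cont(X,D)\) and~\(B'\) are \(\Cont(X)\)-algebras. Let \(\iota\colon D\to\Cont(X,D)\) be the unital \Star{}homomorphism sending~\(d\) to the constant function \(x\mapsto d\). Precomposition with~\(\iota\) carries an approximately \(X\)\nb-equivariant asymptotic morphism \(\Phi\colon\Cont(X,D)\to B'\) to the plain asymptotic morphism \(\Phi\circ\iota\), giving a map \(\iota^*\colon\asm{\Cont(X,D),B'}\to\E_0(A,B)\) (the equivariance is simply forgotten). In the other direction, given a plain asymptotic morphism \(\psi\colon D\to B'\) with induced \Star{}homomorphism \(\dot\psi\colon D\to B'_\infty\), I would set \(\dot\Phi\defeq m\circ(\id_{\Cont(X)}\otimes\dot\psi)\), where \(m\colon\Cont(X)\otimes B'_\infty\to B'_\infty\) is the central \(\Cont(X)\)-module multiplication; as \(\Cont(X)\) is nuclear this is a genuine \Star{}homomorphism \(\Cont(X,D)=\Cont(X)\otimes D\to B'_\infty\), and lifting it by the Bartle--Graves Theorem yields an asymptotic morphism \(\Phi\colon\Cont(X,D)\to B'\). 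Since \(\dot\Phi(\Cont_0(U,D))=\Cont_0(U)\cdot\dot\Phi(\Cont(X,D))\subseteq\Cont_0(U)\cdot B'_\infty\subseteq B'_\infty(U)\), the morphism~\(\Phi\) is approximately \(X\)\nb-equivariant by Lemma~\ref{lemma:on_X_equiv}, so \(\psi\mapsto\Phi\) defines \(\Lambda\colon\E_0(A,B)\to\asm{\Cont(X,D),B'}\).

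It remains to check that \(\iota^*\) and~\(\Lambda\) are mutually inverse. The composite \(\iota^*\circ\Lambda\) is the identity because \(\dot\Phi\circ\iota=\dot\psi\) (the constant~\(1\) times \(\dot\psi\)). For \(\Lambda\circ\iota^*\), the key point --- and the main obstacle --- is to upgrade the mere approximate \(X\)\nb-equivariance of a given~\(\Phi\) to honest \(\Cont(X)\)-linearity of~\(\dot\Phi\). This is exactly Proposition~\ref{pro:compare_Popescu}, applied to the \(\Cont(X)\)-algebras \(\Cont(X,D)\) and~\(B'\) (legitimate since~\(X\) is second countable, compact and Hausdorff): it gives \(\dot\Phi(f\otimes d)=f\cdot\dot\Phi(1\otimes d)=f\cdot(\iota^*\dot\Phi)(d)\) for all \(f\in\Cont(X)\), \(d\in D\), which is precisely \(\dot{\Lambda(\iota^*\Phi)}\). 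Hence~\(\Phi\) and \(\Lambda(\iota^*\Phi)\) have the same induced homomorphism into~\(B'_\infty\), so they are equivalent and therefore homotopic. Both \(\iota^*\) and~\(\Lambda\) are visibly natural in \(A\) and~\(B\) and additive for orthogonal direct sums, so the resulting bijection is a natural group isomorphism, completing the proof. The only genuine input is Proposition~\ref{pro:compare_Popescu}; the well-definedness of~\(\Lambda\) --- that \(m\circ(\id_{\Cont(X)}\otimes\dot\psi)\) is a \Star{}homomorphism on the (by nuclearity unique) tensor product and descends to homotopy classes --- is routine.
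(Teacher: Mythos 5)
Your proof is correct, but it takes a genuinely different route from the paper's. The paper argues abstractly: it shows that \(A\mapsto\Cont(X,A)\) and \(B\mapsto B(X)\) define functors \(\Ecat\to\Ecat(X)\) and \(\Ecat(X)\to\Ecat\) (each descends to \(\E\)\nb-theory by the universal property of Theorem~\ref{the:equivariant_E_universal}), and that these functors are adjoint, with unit and counit given by the constant-function embedding \(A\to\Cont(X,A)\) and the multiplication map \(\Cont(X)\otimes B(X)\to B\); the zigzag identities hold already at the level of \Star{}homomorphisms, and the adjunction bijection \(\E_0(X;\Cont(X,A),B)\cong\E_0(A,B(X))\) is the desired isomorphism. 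Your maps \(\iota^*\) and \(\Lambda\) are exactly this adjunction unwound in the Connes--Higson picture, but your verification is different in substance: instead of invoking functoriality and the zigzag identities, you check mutual inverseness directly on asymptotic morphisms, and the crucial step --- that approximate \(X\)\nb-equivariance of \(\Phi\) forces \(\Cont(X)\)-linearity of \(\dot\Phi\), whence \(\Phi\) is equivalent to \(\Lambda(\iota^*\Phi)\) --- is delegated to Proposition~\ref{pro:compare_Popescu}, which the paper's proof never uses. What the paper's argument buys is brevity and a stronger conclusion (an honest adjunction of functors, hence naturality for free), with all the analysis hidden inside the universal property; what yours buys is an explicit formula for the isomorphism and its inverse at the level of asymptotic morphisms, at the cost of hand-checking well-definedness, additivity and naturality, and of relying on second countability of~\(X\), which Proposition~\ref{pro:compare_Popescu} requires but the statement of the lemma does not mention --- this is harmless only because of the paper's standing assumption in Section~\ref{sec:E_def} that all spaces carrying \(\E\)\nb-theory are second countable.
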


\begin{proof}
  We may view \(\Cont(X,A)\) as a \(\Cst\)\nb-algebra over~\(X\) using the obvious map \(\Prim \Cont(X,A)\to X\), so that \(\Cont(X,A)(U) \defeq \Cont_0(U,A)\) for \(U\in\Open(X)\).  We have to show that the functor
  \[
  \Ecat\to\Ecat(X),\qquad A\mapsto \Cont(X,A),
  \]
  is left adjoint to the functor
  \[
  \Ecat(X)\to\Ecat,\qquad B\mapsto B(X).
  \]
  First of all, both maps on objects clearly induce functors on \(\E\)\nb-theory categories because of the universal properties.  For the adjointness, we have to furnish the unit and counit of adjunction and verify the two zigzag equations (see~\cite{MacLane:Categories}).  The unit is the \(X\)\nb-equivariant \Star{}homomorphism \(\Cont(X,B) = \Cont(X)\otimes B(X) \to B\) that comes from viewing a \(\Cst\)\nb-algebra~\(B\) over~\(X\) as a \(\Cont(X)\)-\(\Cst\)\nb-algebra.  The counit is the embedding \(A\to \Cont(X,A)(X) = \Cont(X,A)\), \(a\mapsto 1\otimes a\), as constant functions.  The zigzag equations are trivial to verify and hold already on the level of \Star{}homomorphisms.
\end{proof}

\begin{proposition}
  \label{pro:restrict_to_locally_closed}
  Let \(Y\subseteq X\) be a locally closed subset.  Then there exists a natural restriction functor \(\E_*(X;A,B)\to \E_*\bigl(Y;r_X^Y(A),r_X^Y(B)\bigr)\) for \(\Cst\)\nb-algebras \(A\) and~\(B\) over~\(X\).
\end{proposition}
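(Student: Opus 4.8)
The plan is to obtain the restriction as a functor produced by the universal property of Theorem~\ref{the:equivariant_E_universal}. On objects, I would set \(r_X^Y(A)\defeq A(U)/A(V)\) for a presentation \(Y=U\setminus V\) with \(V\subseteq U\) open in~\(X\), and equip it with the map \(\Open(Y)\to\Ideals(r_X^Y A)\) that sends an open subset \(W=W'\setminus V\) of~\(Y\), with \(V\subseteq W'\subseteq U\) open in~\(X\), to \(A(W')/A(V)\). Conditions~(1) and~(2) of Definition~\ref{def:Cstar_over_X} are immediate, and condition~(3) holds because the quotient map \(A(U)\to A(U)/A(V)\) carries \(A(\bigcup_n W_n')=\cl{\sum_n A(W_n')}\) onto \(\cl{\sum_n A(W_n')/A(V)}\); thus \(r_X^Y(A)\) is a \(\Cst\)\nb-algebra over~\(Y\). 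An \(X\)\nb-equivariant \Star{}homomorphism descends to a \(Y\)\nb-equivariant one, so \(r_X^Y\) is a functor \(\Cstarsep(X)\to\Cstarsep(Y)\). Composing with the canonical functor \(\Cstarsep(Y)\to\Ecat(Y)\) yields \(F\colon\Cstarsep(X)\to\Ecat(Y)\), and it suffices to check that \(F\) is a half-exact, \(\Cst\)\nb-stable homotopy functor.

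Stability and homotopy invariance are the easy ingredients, because \(r_X^Y\) commutes with the functors that define them. Since \((A\otimes\Comp)(W')=A(W')\otimes\Comp\) and tensoring by the nuclear algebra~\(\Comp\) is exact, we get \(r_X^Y(A\otimes\Comp)=r_X^Y(A)\otimes\Comp\), compatibly with the corner embeddings; and since \(\Cont([0,1],A)(W')=\Cont([0,1],A(W'))\), we get \(r_X^Y(\Cont([0,1],A))=\Cont([0,1],r_X^Y A)\), so \(r_X^Y\) preserves homotopies. As the canonical functor to~\(\Ecat(Y)\) is \(\Cst\)\nb-stable and homotopy invariant, so is~\(F\).

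The main point, and the step I expect to be the genuine obstacle, is half-exactness: that \(r_X^Y\) sends an extension \(I\into E\overset{p}\prto Q\) of \(\Cst\)\nb-algebras over~\(X\) to an extension of \(\Cst\)\nb-algebras over~\(Y\). Unwinding the definition over~\(Y\), it is enough to prove for every open \(W'\) with \(V\subseteq W'\subseteq U\) that
\[
I(W')/I(V)\into E(W')/E(V)\prto Q(W')/Q(V)
\]
is exact, and this is a diagram chase from the two honest extensions \(I(W')\into E(W')\prto Q(W')\) and \(I(V)\into E(V)\prto Q(V)\) that the hypothesis supplies. Left injectivity uses \(I(W')\cap E(V)=I(V)\), which follows from \(I(V)\subseteq I(W')\cap E(V)\subseteq I\cap E(V)=I(V)\); right surjectivity is clear since \(E(W')\prto Q(W')\) is onto and factors through the quotient by \(E(V)\); and middle exactness holds because any \(e\in E(W')\) with \(p(e)\in Q(V)\) may be corrected by some \(e'\in E(V)\) with \(p(e')=p(e)\), so that \(e-e'\in\ker(p)\cap E(W')=I(W')\). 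Carrying this out uniformly in~\(W'\) gives the extension \(r_X^Y I\into r_X^Y E\prto r_X^Y Q\) over~\(Y\), whence \(F\) is half-exact.

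Granting these three properties, the universal property of Theorem~\ref{the:equivariant_E_universal} produces a unique functor \(\Ecat(X)\to\Ecat(Y)\) through which \(F\) factors. On morphism groups this is exactly the asserted natural map \(\E_*(X;A,B)\to\E_*(Y;r_X^Y A,r_X^Y B)\); naturality in \(A\) and~\(B\) is automatic from functoriality, and independence of the presentation \(Y=U\setminus V\) reduces to the known independence of the object-level quotient \(A(U)/A(V)\) from Definition~\ref{def:Cstar_over_X}.
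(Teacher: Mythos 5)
Your proof is correct and follows essentially the same route as the paper: define the restriction functor on the level of \(\Cstarsep(X)\to\Cstarsep(Y)\) (the paper cites \cite{Meyer-Nest:Bootstrap} for this, via \(r_X^YA(Z)\defeq A(Y\cap Z)\), which coincides with your \(A(W\cup V)/A(V)\) description), observe that it preserves extensions and commutes with stabilisation and homotopies, and invoke the universal property of Theorem~\ref{the:equivariant_E_universal}. The only difference is one of detail: you carry out the diagram chase and the verification of conditions (1)--(3) that the paper dismisses as ``evident'' or delegates to the citation.
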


\begin{proof}
  The restriction functor \(\Cstarsep(X)\to\Cstarsep(Y)\) is defined in~\cite{Meyer-Nest:Bootstrap} by \(r_X^YA(Z)\defeq A(Y\cap Z)\) for \(Z\in\Open(Y)\).  It evidently maps extensions again to extensions and commutes with stabilisation.  Hence it induces a functor on \(\E\)\nb-theory by the universal property.
\end{proof}

\section{Approximation by finite spaces}
\label{sec:approx_finite}

Let \(\cov = (U_n)_{n\in\N}\) be a countable basis for the topology of~\(X\).  For each \(n\in\N\), let~\(\tau_n\) be the topology generated by the open subsets \(U_1,\dotsc,U_n\).  That is, the subsets~\(U_j\) are a subbasis for~\(\tau_n\), so that the intersections
\[
U_F \defeq \bigcap_{i\in F} U_i
\]
for \(F\subseteq \{1,\dotsc,n\}\) are a basis for~\(\tau_n\).

Since the topology~\(\tau_n\) is finite, it is pulled back from a finite \(\textup T_0\)\nb-space~\(X_n\); namely, we equip~\(X\) with the equivalence relation
\[
x\sim_n y \iff \{1\le j\le n : x\in U_j\} = \{1\le j\le n : y\in U_j\}
\]
for \(x,y\in X\).  We may view~\(\tau_n\) as a topology on the finite set \(X/{\sim_n}\).  A point in \(X/{\sim_n}\) is parametrised by the set \(\{1\le j\le n : x\in U_j\}\).

\begin{remark}
  \label{rem:min_X_n}
  The minimal open neighbourhood in~\(X_n\) that contains the point corresponding to \(F\subseteq\{1,\dotsc,n\}\) is the image in~\(X_n\) of \(U_F\defeq \bigcap_{i\in F} U_i\).
\end{remark}

In the following, we view \(\Cst\)\nb-algebras over~\(X\) as \(\Cst\)\nb-algebras over \((X,\tau_n)\) or, equivalently, over \(X_n\defeq (X/{\sim_n},\tau_n)\) by forgetting most of the distinguished ideals.

\begin{theorem}
  \label{the:approximate_E}
  Let \(A\) and~\(B\) be \(\Cst\)\nb-algebras over~\(X\), viewed as \(\Cst\)\nb-algebras over \(X_n\defeq (X/{\sim_n},\tau_n)\) for \(n\in\N\).  Then there is a natural extension of \(\Z/2\)-graded Abelian groups
  \[
  \varprojlim\nolimits^1 \E_{*+1}(X_n;A,B)
  \into \E_*(X;A,B) \prto
  \varprojlim \E_*(X_n;A,B).
  \]
\end{theorem}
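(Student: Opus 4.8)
The plan is to present $\E_*(X;A,B)$ as the homotopy groups of the \emph{intersection} of a decreasing tower of quasi-topological spaces, to recognise that intersection as the homotopy limit of the tower by means of a mapping telescope in the asymptotic parameter, and finally to read off the claimed extension from the standard Milnor $\varprojlim^1$ sequence for homotopy limits of towers.

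First I would make the tower explicit. Put $A'\defeq\Cont_0(\R,A)\otimes\Comp$ and $B'\defeq\Cont_0(\R,B)\otimes\Comp$ and, for each $i\in\N$, let $W_i$ be the set of asymptotic morphisms $\varphi\colon A'\to\Cont_\bo(T,B')$ satisfying condition~\eqref{eq:def1_ah_X_equiv} for the single subbasic open set $U_i$, where $T=[0,\infty)$. Using the reformulation in Lemma~\ref{lemma:on_X_equiv} together with the lattice relations~(2) and~(3) for $\Cst$\nb-algebras over a space, I would check that approximate $X_n$\nb-equivariance is detected by the finite subbasis $U_1,\dots,U_n$, so that the space $\Asm[X_n]{A',B'}$, whose homotopy groups are $\E_*(X_n;A,B)$, equals $V_n\defeq\bigcap_{i=1}^n W_i$; the only extra points are that condition~\eqref{eq:def1_ah_X_equiv} is compatible with the finite intersections and unions that generate $\tau_n$, which follows from~(2) and~(3). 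Since $(U_i)$ is a basis, the same reformulation shows that approximate $X$\nb-equivariance is the conjunction of all the conditions $W_i$, so that
\[
\Asm[X_1]{A',B'}\supseteq\Asm[X_2]{A',B'}\supseteq\cdots,
\qquad
\bigcap_{n\in\N}\Asm[X_n]{A',B'}=\Asm{A',B'}.
\]
Thus the $V_n$ form a decreasing tower inside the ambient quasi-topological space of all asymptotic morphisms $A'\to B'$, with bonding maps the inclusions, and $\pi_*\bigl(\bigcap_n V_n\bigr)=\E_*(X;A,B)$.

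Next I would invoke the abstract Milnor sequence. For any tower of pointed quasi-topological spaces with bonding maps, the homotopy limit $R$ — modelled as the subspace of $\prod_n V_n$ consisting of sequences joined by compatible paths — sits in a natural short exact sequence
\[
\varprojlim\nolimits^1\pi_{*+1}(V_n)\into\pi_*(R)\prto\varprojlim\pi_*(V_n).
\]
This is the usual consequence of the fibration sequence expressing $R$ as the homotopy fibre of the map $1-\textup{shift}\colon\prod_n V_n\to\prod_n V_n$ built from the bonding maps, and it carries over verbatim to the quasi-topological setting because products and path spaces of quasi-topological spaces are again quasi-topological and homotopy groups are computed through quasi-continuous maps, exactly as in the argument behind Proposition~\ref{prop:Milnor_regular}. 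With $V_n=\Asm[X_n]{A',B'}$ and $\pi_*(V_n)=\E_*(X_n;A,B)$ this already supplies the two outer terms of the asserted extension.

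It therefore remains to identify $\bigcap_n V_n=\Asm{A',B'}$ with the homotopy limit $R$, that is, to show that the canonical map sending $\varphi\in\bigcap_n V_n$ to the constant sequence $(\varphi,\varphi,\dots)$ with constant joining paths induces isomorphisms on all homotopy groups. Here I would construct a homotopy inverse by a mapping telescope in the parameter $T=[0,\infty)$: given a point of $R$, that is, a family $\varphi^{(n)}\in V_n$ together with paths in $V_n$ from $\varphi^{(n)}$ to $\varphi^{(n+1)}$, I would splice the $\varphi^{(n)}$ into a single asymptotic morphism that follows $\varphi^{(n)}$ on a time window $[T_n,T_{n+1}]$ and uses the joining path to interpolate near the window boundary, with $T_n\to\infty$. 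Because every tail of the spliced morphism beyond time $T_n$ only involves members of the spaces $V_m$ with $m\ge n$, the spliced morphism is approximately $X_n$\nb-equivariant for each fixed $n$ and hence lands in $\bigcap_n V_n$; a routine homotopy, linear in the window endpoints, would show the two maps mutually inverse. The \textbf{main obstacle} is to make the splice land \emph{exactly} in the intersection rather than only up to a residual equivariance defect: for each fixed $U_i$ one must force $\norm{\varphi_t(a)}_{X\setminus U_i}\to0$ while simultaneously letting the tower index tend to infinity, and this requires a diagonal choice of the window endpoints $T_n$ together with a sufficiently slowly increasing reparametrisation $r\colon T\to T$ — fast enough to record equivariance at every level, slow enough to keep the path interpolations under control. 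This is precisely the kind of control produced in the construction of $r_0$ in Lemma~\ref{lem:composition_X-equivariant}, which I would adapt here. Finally, naturality of the whole sequence in $A$ and $B$ is automatic, since every construction above is functorial.
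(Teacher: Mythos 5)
Your proposal is correct and follows essentially the same route as the paper's proof: the paper likewise realises \(\E_*(X;A,B)\) as the homotopy of the intersection \(\Gamma_X=\bigcap_n\Gamma_n\) of the tower of spaces of approximately \(X_n\)\nb-equivariant asymptotic morphisms, obtains the \(\varprojlim\nolimits^1\)-sequence from a quasi-topological version of Vogt's theorem on homotopy equalisers applied to the shift map on \(\prod_n\Gamma_n\) (your homotopy fibre of \(\Id-\textup{shift}\)), and identifies \(\pi_0\) of the intersection with \(\pi_0\) of the resulting mapping telescope by exactly the diagonal reparametrisation-with-control argument you sketch, carried out with estimates as in Lemma~\ref{lem:composition_X-equivariant}.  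The only notable difference is that the paper argues on homotopy classes (bijectivity of \([Y,\Gamma_X]\to[Y,\Gamma_\infty]\) for compact~\(Y\), with injectivity obtained by applying the same diagonal argument to homotopies) rather than constructing an actual homotopy-inverse map, which is the safer formulation since the window/reparametrisation choices depend on the given telescope point and cannot be made quasi-continuously.
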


\begin{proof}
  Recall the description of \(\asm{A,B}\) as the zeroth homotopy group of a quasi-topological space \(\Asm{A,B}\) in Section~\ref{sec:homotopy_asy}.  This also applies to \(\E\)\nb-theory: we have \(\E_0(X;A,B) \cong \pi_0(\Gamma_X)\) with
  \[
  \Gamma_X \defeq \Asm{\Cont_0(\R,A)\otimes\Comp, \Cont_0(\R,B)\otimes\Comp}.
  \]
  The same definitions for~\(X_n\) yield quasi-topological spaces \(\Gamma_n \defeq \Gamma_{X_n}\) for \(n\in\N\) with \(\E_0(X_n;A,B) \cong \pi_0( \Gamma_n)\).  The quasi-topological spaces \(\Gamma_n\) form a projective system because approximate \(X_{n+1}\)\nb-equivariance implies approximate \(X_n\)\nb-equivariance.

  We claim that
  \[
  \Gamma_X = \bigcap_{n\in\N} \Gamma_n,\qquad
  \Cont(Y,\Gamma_X) = \bigcap_{n\in\N} \Cont(Y,\Gamma_n)
  \]
  for each compact Hausdorff space~\(Y\), where \(\Cont(Y,\Gamma_n)\) denotes the space of quasi-continuous maps \(Y\to\Gamma_n\).

  The inclusion \(\Cont(Y,\Gamma_X)\subseteq \bigcap \Cont(Y,\Gamma_n)\) is evident.  The intersection of \(\Cont(Y,\Gamma_n)\) consists of those asymptotic morphisms that satisfy~\eqref{eq:def1_ah_X_equiv} for all \(U\in\cov\).  Since the set of open subsets for which~\eqref{eq:def1_ah_X_equiv} holds is closed under arbitrary unions and~\(\cov\) is a basis for the topology of~\(X\), this implies~\eqref{eq:def1_ah_X_equiv} for all open subsets of~\(X\), proving the claim.

  The claim above shows that \(\Gamma_X\) is the inverse limit of the projective system \(\Gamma_n\).  The homotopy groups of inverse limits of ordinary topological spaces are computed by an exact sequence of the desired form if the maps \(\Gamma_{n+1}\to \Gamma_n\) have the \emph{homotopy covering property}, see~\cite{Vogt:Dual_Milnor}.  It is easy to see that this carries over to quasi-topological spaces; but in our case the maps \(\Gamma_{n+1} \to \Gamma_n\) are injective and therefore cannot have the homotopy covering property.  Nevertheless, we can get the desired result by following part of the argument in~\cite{Vogt:Dual_Milnor}.

  First we observe that \cite{Vogt:Dual_Milnor}*{Theorem~C}, which computes the homotopy groups of homotopy equalisers remains true for quasi-topological spaces.  Let \(f,g\colon A\rightrightarrows B\) be two base point preserving quasi-continuous maps between pointed quasi-topological spaces.  The \emph{homotopy equaliser} of \(f,g\) is the quasi-topological space \(D(f,g)\) defined so that, for all~\(Y\) compact Hausdorff,
  \begin{multline*}
    \Cont\bigl(Y,D(f,g)\bigr) =
    \{(a,b) \in \Cont(Y,A) \times \Cont(Y\times I,B)\mid\\
    f\circ a = b(\blank,0),\quad g\circ a = b(\blank,1)\}.
  \end{multline*}
  Let~\(Y\) be a compact Hausdorff space.  Then there is an exact sequence of pointed sets
  \begin{equation}
    \label{eq:htpy_equaliser}
    * \to T \to [Y,D(f,g)] \to K \to *
  \end{equation}
  where \([Y,X]\) denotes homotopy classes of quasi-continuous maps \(Y\to X\), \(K \defeq \{a \in [Y,A] \mid f_*(a) = g_*(a)\}\), and~\(T\) is the orbit space for a certain canonical action of \([Y\times\Sphere^1,A]_*\) on \([Y\times\Sphere^1,B]_*\), where \([Y\times\Sphere^1,\blank]_*\) means that we restrict attention to quasi-continuous maps and homotopies that map \(Y\times\{1\}\subseteq Y\times\Sphere^1\) to the base point.

  Next we apply~\eqref{eq:htpy_equaliser} to the pair of maps
  \[
  \Id,f\colon \prod_{n=0}^\infty \Gamma_n \rightrightarrows \prod_{n=0}^\infty \Gamma_n,
  \]
  where~\(f\) is the shift map from the definition of the projective limit.  Letting \(\gamma_{n+1}^n\colon \Gamma_{n+1}\to\Gamma_n\) denote the maps in the projective system, we have \(f\bigl((x_n)_{n\in\N})\bigr) \defeq \bigl(\gamma_{n+1}^n(x_{n+1})\bigr)_{n\in\N}\).  The homotopy equaliser of \((\id,f)\) is quasi-homeomorphic to the quasi-topological space~\(\Gamma_\infty\) defined by
  \begin{multline*}
    \Cont(Y,\Gamma_\infty) \defeq
    \Bigl\{ (f_n)_{n=0}^\infty \in \prod_{n\in\N}
    \Cont([0,1]\times Y,\Gamma_n) \Bigm|\\
    \text{\(f_n(1) = \gamma_{n+1}^n\bigl(f_{n+1}(0)\bigr)\) for
      all \(n\in\N\)} \Bigr\}.
  \end{multline*}
  This is a familiar mapping telescope construction.  The quasi-topological version of \cite{Vogt:Dual_Milnor}*{Theorem~C} shows that the homotopy groups of~\(\Gamma_\infty\) are computed by an exact sequence of exactly the desired form.

  To finish the proof of the theorem, it remains to show that the homotopy limit~\(\Gamma_\infty\) and the limit~\(\Gamma_X\) of the projective system~\((\Gamma_n)\) have isomorphic~\(\pi_0\).  Lacking the homotopy covering property used in~\cite{Vogt:Dual_Milnor}, we do this by hand.

  Let us describe the homotopy limit~\(\Gamma_\infty\) more concretely.  The maps $\gamma_{n+1}^n\colon \Gamma_{n+1}\to\Gamma_n$ are just the inclusion maps.  It is convenient to identify \(\Cont(Y,\Gamma_\infty)\) with
  \begin{multline*}
    \Cont(Y,\Gamma_\infty) =
    \Bigl\{ (f_n)_{n=0}^\infty \in \prod_{n\in\N}
    \Cont([n,n+1]\times Y,\Gamma_n)
    \Bigm|\\
    \text{\(f_n(n+1) = f_{n+1}(n+1)\) for all \(n\in\N\)} \Bigr\}.
  \end{multline*}
  We view each~$f_n$ as an approximately $X_n$\nb-equivariant asymptotic morphism from~$A'$ to $\Cont([n,n+1]\times Y,B')$, where \(A'\defeq \Cont_0(\R,A)\otimes\Comp\) and \(B'\defeq \Cont_0(\R,B)\otimes\Comp\).  We may piece together these asymptotic morphisms to a single family of maps \(\varphi_{s,t}\colon A'\to \Cont(Y,B')\), $s,t\in T$, where \(\varphi_{s,t}|_{s\in[n,n+1]}\) is~\(f_n\).  That is, \(\varphi_{s,t}\) is an asymptotic morphism for fixed~\(s\), uniformly for \(s\in[n,n+1]\) for all~\(n\), and hence uniformly for~\(s\) in compact subsets of~\(T\); furthermore, this asymptotic morphism is (uniformly) approximately \(X_n\)\nb-equivariant for \(s\in[n,n+1]\) and hence for~\(s\) in compact subsets of \([n,\infty)\).

  We map \(\Gamma_X\) to~\(\Gamma_\infty\) by taking a constant family of asymptotic morphisms.  It remains to show that this map \(\Gamma_X\to \Gamma_\infty\) induces an isomorphism on homotopy classes.

  Let \((\varphi_{s,t})\in\Gamma_\infty\) and let \(A_0\subseteq A'\) be a compactly generated dense subalgebra.  The same considerations as in the construction of the product of asymptotic homomorphisms show that there is an increasing continuous function \(h_0\colon T\to T\) such that \(\varphi_{t,h(t)}\colon A_0\to B'\) extends to an \(X\)\nb-equivariant asymptotic morphism for all continuous \(h\ge h_0\).  Here we use that an asymptotic morphism is \(X\)\nb-equivariant once it satisfies~\eqref{eq:def1_ah_X_equiv} for all \(U\in\cov\).  Furthermore, we may choose~\(h_0\) such that the convex homotopies~\(\varphi_{s,rh(t)+(1-r)t}\) from~\(\varphi_{s,t}\) to~\(\varphi_{s,h(t)}\) and~\(\varphi_{rt+(1-r)s,h(t)}\) from~\(\varphi_{s,h(t)}\) to~\(\varphi_{t,h(t)}\) are homotopies in~\(\Gamma_\infty\) for \(h\ge h_0\). We discuss this in detail below.  Thus~\((\varphi_{s,t})\) is homotopic to the constant family of asymptotic morphism~\((\varphi_{t,h(t)}) \) in~\(\Gamma_\infty\), so that the map \(\pi_0(\Gamma_X) \to \pi_0(\Gamma_\infty)\) is surjective.  A similar argument may be applied to homotopies in~\(\Gamma_\infty\) and shows that two elements of \(\Gamma_X(A,B)\) that become homotopic in~\(\Gamma_\infty\) are already homotopic in \(\Gamma_X\).

  Let us now show how to construct the function~$h_0$ for given \((\varphi_{s,t})\in\Gamma_\infty\).  The first homotopy from~\(\varphi_{s,t}\) to~\(\varphi_{s,h(t)}\) is a homotopy of asymptotic morphisms provided \(h(t)\ge t\), for obvious reasons.  Thus it only remains to study the second homotopy.  Let \(A_0=\{a_1,a_2,\dotsc\}\subseteq A'\) be a countable dense $^*$\nb-subalgebra.  Let $\{\lambda_1,\lambda_2,\dotsc\}$ be a sequence dense in~$\C$.  Let \((U_i)_{i=1}^\infty\) be a basis of open sets for the topology of~\(X\).  Choose a dense sequence \((a_{ij})_{j=1}^\infty\) in \(A'(U_i)\) for each \(i\ge 1\).

  For each integer $m\geq 1$ choose $\alpha_m>0$ such that for all $1\le i,j,k \le m$ and all $t\geq \alpha_m$,
  \begin{gather}
    \label{eq:add101}
    \sup_{s\in [0,m+1]} \norm{\varphi_{s,t}(a_i^*+\lambda_k a_j)-\varphi_{s,t}(a_i)^*-\lambda_k\,\varphi_{s,t}(a_j)}
    < 1/m,\\
    \label{eq:mult101}
    \sup_{s\in [0,m+1]} \norm{\varphi_{s,t}(a_ia_j)-\varphi_{s,t}(a_i)\varphi_{s,t}(a_j)}
    < 1/m.
  \end{gather}
  For each integer $n\geq 1$ we construct a sequence $(\tau_{m,n})_{m=1}^{\infty}$ such that
  \begin{equation}
    \label{eq:equiv101}
    \sup_{s\in [n,m+1]} \norm{\varphi_{s,t}(a_{ij})}_{X\setminus U_i}
    < 1/m,
  \end{equation}
  for all $1\le i\le n$, $1\le j \le m$ and all $t\geq \tau_{m,n}$.  Moreover, once the sequence $(\tau_{m,n})_{m=1}^\infty$ is constructed, we construct the next sequence $(\tau_{m,n+1})_{m=1}^\infty$ such that $\tau_{m,n+1}\geq\tau_{m,n}$ for all $m\geq 1$.  Let $h_0\colon T\to T$ be a continuous increasing function with $h_0(m) \geq \max\{\alpha_m, \tau_{m,m}\}$ and $\lim_{t\to\infty} h_0(t)=\infty$.

  Let $h\geq h_0$ be a continuous function.  The homotopy~\(\varphi_{rt+(1-r)s,h(t)}\) is defined by an element~$H$ in
  \begin{multline*}
    \Cont([0,1]\times Y,\Gamma_\infty) =
    \Bigl\{ (H_n)_{n=0}^\infty \in \prod_{n\in\N}
    \Cont([0,1]\times [n,n+1]\times Y,\Gamma_n)
    \Bigm|\\
    \text{\(H_n(n+1) = H_{n+1}(n+1)\) for all \(n\in\N\)} \Bigr\},
  \end{multline*}
  where for $r\in [0,1]$, $(H_n)_r\defeq (\varphi_{rt+(1-r)s,h(t)})_{s\in [n,n+1],t\in T}$.

  In order to verify that~$H$ is an element of $\Cont([0,1]\times Y,\Gamma_\infty)$, it is sufficient to show that for all $i,j,k\geq 1$
  \begin{multline}
    \label{eq:add102}
    \lim_{\strut t\to \infty} \sup_{s\in [n,n+1],\ r\in [0,1]} \norm{\varphi_{rt+(1-r)s,h(t)}(a_i^*+\lambda_k a_j)\\
    -\varphi_{rt+(1-r)s,h(t)}(a_i)^*-\lambda_k\,\varphi_{rt+(1-r)s,h(t)}(a_j)}=0,
  \end{multline}
  \begin{multline}
    \label{eq:mult102}
    \lim_{\strut t\to \infty} \sup_{s\in [n,n+1],\ r\in [0,1]}\norm{\varphi_{rt+(1-r)s,h(t)}(a_ia_j)\\
    -\varphi_{rt+(1-r)s,h(t)}(a_i)\ \varphi_{rt+(1-r)s,h(t)}(a_j)}=0,
  \end{multline}
  and that for all $1\le i \le n$, $j\geq 1$
  \begin{equation}
    \label{eq:equiv102}
    \lim_{\strut t\to \infty} \sup_{s\in [n,n+1],\ r\in [0,1]}\norm{\varphi_{rt+(1-r)s,h(t)}(a_{ij})}_{X\setminus U_i}
    =0.
  \end{equation}
  We deal first with \eqref{eq:add102} and~\eqref{eq:mult102}.  Let $i,j,k \geq 1$ and $\varepsilon>0$ be given.  We claim that for any $t\geq \max\{n,i,j,k, 1/\varepsilon\}+1$, the quantities whose limits are taken in \eqref{eq:add102} and~\eqref{eq:mult102} are smaller than~$\varepsilon$.  If~$m$ is the integer part of~$t$, then $\max\{n,i,j,k, 1/\varepsilon\}< m \le t<m+1$. Moreover, for any $s\in [n,n+1]$ and $r\in [0,1]$, $rt+(1-r)s\in [0,m+1]$ and $h(t)\geq h_0(t)\geq h_0(m)\geq \alpha_m$.  Since $1/m<\varepsilon$ our claim follows now from \eqref{eq:add101} and~\eqref{eq:mult101}.

  Let us now check~\eqref{eq:equiv102}. Let $1\le i \le n$, $j\geq 1$ and $\varepsilon>0$ be given and suppose that $t\geq \max\{n,j, 1/\varepsilon\}+1$.  Then there is an integer~$m$ such that $\max\{n,j, 1/\varepsilon\}< m \le t<m+1$.  Observe that for any $s\in [n,n+1]$ and $r\in [0,1]$, $rt+(1-r)s\in [n,m+1]$ and $h(t)\geq h_0(t)\geq h_0(m)\geq \tau_{m,m}\geq \tau_{m,n}$.  Since $1/m<\varepsilon$, it follows from~\eqref{eq:equiv101} that the quantity whose limit is taken in~\eqref{eq:equiv102} is smaller than~$\varepsilon$ whenever $t\geq \max\{n,j, 1/\varepsilon\}+1$.
\end{proof}

\begin{theorem}
  \label{the:E-iso_finite}
  Let~\(X\) be a second countable topological space.  An element in \(\E_*(X;A,B)\) is invertible if and only if its image in \(\E_*\bigl(A(U),B(U)\bigr)\) is invertible for all \(U\in\Open(X)\).
\end{theorem}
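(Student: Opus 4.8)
The plan is to reduce, via the approximation sequence of Theorem~\ref{the:approximate_E}, to the case of a finite \(\textup{T}_0\)\nb-space, and there to argue by induction on the number of points using the extension triangles of \(\Ecat(X)\). Throughout I phrase invertibility in terms of mapping cones: by Lemma~\ref{lemma:universality_E_X} and Theorem~\ref{the:E-triangulated}, every \(\varphi\in\E_*(X;A,B)\) embeds in an exact triangle whose third term is a genuine \(\Cst\)\nb-algebra \(C_\varphi\) over~\(X\), and \(\varphi\) is invertible if and only if \(C_\varphi\cong0\) in \(\Ecat(X)\). Since all the functors I use -- the restrictions \(r_X^Y\) to locally closed subsets (Proposition~\ref{pro:restrict_to_locally_closed}), the functors sending a \(\Cst\)\nb-algebra over an open or closed subset to one over the ambient space, and the structure-forgetting functors \(\Ecat(X)\to\Ecat(X_n)\) -- are exact, they commute with the formation of mapping cones; in particular \((C_\varphi)(S)\cong C_{\varphi(S)}\) non-equivariantly for every locally closed~\(S\). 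The ``only if'' direction is then immediate: restriction to an open~\(U\) followed by forgetting the \(U\)\nb-structure is a composition-preserving functor sending \(\varphi\) to its image in \(\E_*(A(U),B(U))\), so it carries invertible elements to invertible elements.

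For the converse the heart of the argument is the following finite-space statement, which I would isolate as a lemma: \emph{if \(Y\) is a finite \(\textup{T}_0\)\nb-space and \(C\) is a \(\Cst\)\nb-algebra over~\(Y\) with \(C(U)\cong0\) in \(\Ecat\) for every open \(U\subseteq Y\), then \(C\cong0\) in \(\Ecat(Y)\).} First I note that the hypothesis propagates to all locally closed subsets: if \(S=V\setminus V'\) with \(V'\subseteq V\) open, the non-equivariant extension \(C(V')\into C(V)\prto C(S)\) gives an exact triangle in which two terms vanish, whence \(C(S)\cong0\). I then induct on \(\abs{Y}\); the case \(\abs{Y}\le1\) is the hypothesis with \(U=Y\). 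For \(\abs{Y}>1\) choose a proper nonempty open~\(U\) (one exists because a \(\textup{T}_0\)\nb-space with more than one point is not indiscrete) and put \(Z\defeq Y\setminus U\). Restricting to~\(U\), the object \(r_Y^U C=C(U)\) over~\(U\) satisfies the inductive hypothesis, since \(\Open(U)\subseteq\Open(Y)\); hence \(C(U)\cong0\) over~\(U\), and therefore \(C(U)\cong0\) over~\(Y\) because extension by zero is functorial. Restricting to the closed subset~\(Z\), every open \(W\subseteq Z\) has the form \(Z\cap V\) with \(V\) open in~\(Y\), so \((r_Y^Z C)(W)=C(Z\cap V)\cong0\) by the locally closed remark; by induction \(C(Z)\cong0\) over~\(Z\), hence over~\(Y\). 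Finally the extension triangle \(C(U)\to C\to C(Z)\) over~\(Y\) has two vanishing terms, so \(C\cong0\) over~\(Y\).

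With the lemma in hand the theorem follows quickly. Suppose \(\varphi(U)\) is invertible for all \(U\in\Open(X)\), equivalently \(C_\varphi(U)\cong0\) for all such~\(U\). Fix~\(n\) and view \(C_\varphi\) as a \(\Cst\)\nb-algebra over~\(X_n\). Every open subset of~\(X_n\) is a \(\tau_n\)\nb-open, hence \(X\)\nb-open, subset~\(V\) of~\(X\), and \((C_\varphi)(V)=C_\varphi(V)\cong0\); so the lemma gives \(C_\varphi\cong0\) in \(\Ecat(X_n)\), and consequently \(\E_*(X_n;C_\varphi,C_\varphi)=0\) for every~\(n\). Applying the approximation sequence of Theorem~\ref{the:approximate_E} with \(A=B=C_\varphi\), both the \(\varprojlim\) and the \(\varprojlim^1\) terms vanish, so \(\E_*(X;C_\varphi,C_\varphi)=0\); in particular \(\id_{C_\varphi}=0\), that is, \(C_\varphi\cong0\) in \(\Ecat(X)\). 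Hence \(\varphi\) is invertible.

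I expect the main obstacle to lie entirely in the finite-space lemma, specifically in the closed-subset step: one must know that invertibility on \emph{open} subsets forces invertibility on the locally closed subquotients \(A(V\setminus U)\), and that the restriction functors are exact so that the extension triangles and the formation of \(C_\varphi\) are compatible. Once that bookkeeping is in place, the passage from the finite approximations~\(X_n\) to~\(X\) is painless: because each \(C_\varphi\) is an honest zero object over~\(X_n\) -- not merely an object with vanishing identity endomorphism -- the entire groups \(\E_*(X_n;C_\varphi,C_\varphi)\) vanish, so the potentially troublesome \(\varprojlim^1\) term in the approximation sequence is automatically killed and never needs to be analysed directly.
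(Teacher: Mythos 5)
Your proof is correct, but it takes a genuinely different route from the paper's, in both halves. For finite spaces the paper does not induct on the number of points: it uses the objects \(i_x(D)\) (extension by zero from a point) together with the adjunction-type isomorphism \(\E_*(X;i_x(D),B)\cong\E_*\bigl(D,B(U_x)\bigr)\) for the minimal open neighbourhoods~\(U_x\), and the fact that the \(i_x(D)\) generate \(\Ecat(X)\) as a triangulated category; invertibility is then tested against these generators by a Yoneda argument. That route yields a finer statement --- for finite~\(X\) it suffices to check invertibility of \(\varphi(U_x)\) for the finitely many minimal opens --- and the generation statement is reused later (Proposition~\ref{pro:Bootstrap_generators_finite}). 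Your open/closed induction consumes the hypothesis on all opens instead, and its unproven ingredients (that extension by zero \(\Ecat(U)\to\Ecat(Y)\) and \(\Ecat(Z)\to\Ecat(Y)\) are triangulated functors) are not in the paper, but they follow by exactly the universal-property argument of Proposition~\ref{pro:restrict_to_locally_closed}; the crucial point that the extension \(i_U\bigl(C(U)\bigr)\into C\prto i_Z\bigl(C(Z)\bigr)\) gives an exact triangle with no admissibility hypothesis is precisely what \(\E\)-theory supplies, so the induction is sound. For the infinite case your argument is actually cleaner than the paper's: the paper lifts the coherent family of inverses through the surjection onto \(\varprojlim\E_*(X_n;B,A)\) and then must invoke the (unproved in detail) claim that the \(\varprojlim\nolimits^1\)-kernel is a nilpotent ideal in order to correct the lift to an honest inverse, whereas applying Theorem~\ref{the:approximate_E} to the pair \((C_\varphi,C_\varphi)\), whose approximation groups vanish identically, bypasses any analysis of the \(\varprojlim\nolimits^1\) term. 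In short: the paper's approach buys a sharper finite-space criterion and a generation result of independent use; yours is more self-contained and removes the most delicate step of the passage to infinite spaces.
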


\begin{proof}
  The necessity of the condition is trivial.  Next we sketch
  why the condition is sufficient if~\(X\) is a finite space.
  The proof is similar to the proof of a similar statement in
  \(\KK\)\nb-theory in \cite{Meyer-Nest:Bootstrap}*{Proposition
    4.9}.  If~\(X\) is finite, any point \(x\in X\) is
  contained in a minimal open subset~\(U_x\).  For a
  \(\Cst\)\nb-algebra~\(A\), let~\(i_x A\) be~\(A\) viewed as a
  \(\Cst\)\nb-algebra over~\(X\) concentrated at \(x\in X\),
  that is, \(i_x(A)(U) = A\) for \(x\in U\) and \(i_x(A)(U) =
  0\) for \(x\notin U\).  An argument similar to the proof of
  \cite{Meyer-Nest:Bootstrap}*{Proposition 3.13} yields
  \[
  \E_*(X;i_x(A),B) \cong \E_*\bigl(A,B(U_x)\bigr)
  \]
  for \(x\in X\), a \(\Cst\)\nb-algebra~\(A\) and a
  \(\Cst\)\nb-algebra~\(B\) over~\(X\).  An argument similar to
  the proof of \cite{Meyer-Nest:Bootstrap}*{Proposition 4.7}
  shows that objects of the form~\(i_x(A)\)
  generate~\(\Ecat(X)\), that is, no proper triangulated
  subcategory of~\(\Ecat(X)\) contains \(i_x(A)\) for all~\(A\)
  (see also Proposition~\ref{pro:Bootstrap_generators_finite}
  below).  Hence a map in \(\E_*(X;A,B)\) is invertible if the
  induced map  \(\E_*(X;i_x(D),A)\to\E_*(X;i_x(D),B)\) is invertible for
  all \(x\in X\) and all~\(D\).  By the isomorphism above, this
  is equivalent to invertibility of the induced map
  \(\E_*\bigl(D,A(U_x)\bigr)\to\E_*\bigl(D,B(U_x)\bigr)\), which is equivalent to
  invertibility in \(\E_*\bigl(A(U_x),B(U_x)\bigr)\) for
  all~\(x\).  This finishes the argument for finite~\(X\).

  If~\(X\) is infinite, let~\(\cov\) be a countable basis for its topology and let~\(X_n\) be the resulting finite approximations to~\(X\).  Theorem~\ref{the:approximate_E} shows that an arrow in \(\Ecat(X)\) is invertible if and only if its image in \(\Ecat(X_n)\) is invertible for all \(n\in\N\).  (The naturality of the extension in Theorem~\ref{the:approximate_E} implies that the kernel $\varprojlim\nolimits^1 \ldots$ is nilpotent.)  This reduces the general case to the finite case already settled.
\end{proof}

\begin{theorem}
  \label{the:bootstrap_complement_fields}
  Let~$A$ be a separable nuclear \(\Cst\)\nb-algebra with Hausdorff primitive spectrum~$X$.
   Suppose that each two-sided closed ideal of~$A$ is $\KK$-contractible.  Then
  \[
  A\otimes \Cuntza_\infty\otimes \Comp\cong \Cont_0(X)\otimes \Cuntza_2 \otimes \Comp.
  \]
\end{theorem}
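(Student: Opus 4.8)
The plan is to pass through equivariant $\E$\nb-theory, using Theorem~\ref{the:E-iso_finite} to reduce invertibility over~$X$ to invertibility of all restrictions to open subsets, and then to feed a resulting $\KK(X)$\nb-equivalence into Kirchberg's classification theorem~\cite{Kirchberg:Michael}. Write $A'\defeq A\otimes\Cuntza_\infty\otimes\Comp$ and $B'\defeq\Cont_0(X)\otimes\Cuntza_2\otimes\Comp$. Since~$A$ is separable, $X=\Prim(A)$ is a second countable, locally compact Hausdorff space, and~$A$ is a continuous field of nuclear $\Cst$\nb-algebras over~$X$; the same holds for~$A'$, whereas~$B'$ is the trivial such field. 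Both~$A'$ and~$B'$ are $\Cst$\nb-algebras over~$X$ with $\Prim(A')\cong\Prim(B')\cong X$, and I regard them as objects of~$\Ecat(X)$.

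First I would show that both~$A'$ and~$B'$ are $\E(X)$\nb-contractible, that is, isomorphic to~$0$ in~$\Ecat(X)$. For any object~$D$ of~$\Ecat(X)$, the zero endomorphism of~$D$ is invertible exactly when $D\cong 0$; hence Theorem~\ref{the:E-iso_finite}, applied to $0\in\E_*(X;D,D)$, shows that~$D$ is $\E(X)$\nb-contractible if and only if every restriction~$D(U)$, $U\in\Open(X)$, is $\E$\nb-contractible. For~$A'$ this restriction is $A'(U)=A(U)\otimes\Cuntza_\infty\otimes\Comp$; as $A(U)$ is a two-sided closed ideal of~$A$, it is $\KK$\nb-contractible by hypothesis and nuclear, so $A(U)\otimes\Cuntza_\infty\otimes\Comp$ is $\KK$\nb-contractible and therefore $\E$\nb-contractible through the canonical functor $\KKcat\to\Ecat$. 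For~$B'$ the restriction is $B'(U)=\Cont_0(U)\otimes\Cuntza_2\otimes\Comp$, which is $\KK$\nb-contractible because $\Cuntza_2$ is, and hence $\E$\nb-contractible as well. Thus both~$A'$ and~$B'$ are $\E(X)$\nb-contractible, and in particular they are $\E(X)$\nb-equivalent.

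It remains to transfer this to~$\KK(X)$ and to classify. Because~$A'$ and~$B'$ are continuous fields of nuclear $\Cst$\nb-algebras over the locally compact Hausdorff space~$X$, the natural transformation $\E_*(X;\blank,\blank)\to\KK_*(X;\blank,\blank)$ is invertible on the relevant morphism groups, and since this functor preserves composition the $\E(X)$\nb-equivalence yields a $\KK(X)$\nb-equivalence $A'\sim B'$. Now both algebras are separable, nuclear and stable; $A'$ is $\Cuntza_\infty$\nb-absorbing and $B'$ is $\Cuntza_2$\nb-absorbing, so both are strongly purely infinite by Kirchberg--R\o rdam; and their primitive ideal spaces are homeomorphic to~$X$ compatibly with the $X$\nb-structure. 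Kirchberg's classification theorem~\cite{Kirchberg:Michael} then provides a $\Cst$\nb-algebra isomorphism $A'\cong B'$, which is exactly the assertion. I expect the crux to be the middle step: the invertibility criterion of Theorem~\ref{the:E-iso_finite} relies on the good excision of $\E$\nb-theory and has no counterpart in $\KK(X)$, so one must establish $\E(X)$\nb-equivalence first and only then pass to $\KK(X)$ using that both algebras are nuclear continuous fields; checking strong pure infiniteness and the precise equivariant form of Kirchberg's theorem is the remaining technical burden.
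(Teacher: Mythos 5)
Your proposal is correct and follows essentially the same route as the paper: the paper likewise applies Theorem~\ref{the:E-iso_finite} to see that the zero element is an $\E(X)$\nb-equivalence (there between $A$ and $\Cont_0(X)\otimes\Cuntza_2$, using the $\KK$\nb-contractibility of the ideals $A(U)$ and of $\Cuntza_2$), transfers this to a $\KK(X)$\nb-equivalence via Theorem~\ref{the:E_versus_KK_Hausdorff}, and then invokes Kirchberg's classification theorem. The only cosmetic difference is that you stabilise with $\Cuntza_\infty\otimes\Comp$ at the outset and apply Kirchberg's theorem in its strongly purely infinite stable form, whereas the paper works with the untensored algebras and lets the tensoring by $\Cuntza_\infty\otimes\Comp$ appear in the conclusion of Kirchberg's theorem.
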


\begin{proof} By a result of Fell, $A$ is a continuous $\Cont_0(X)$-algebra with nonzero simple fibres.
  Set $B \defeq \Cont_0(X)\otimes \Cuntza_2$.  Then $0\in \E(X;A,B)$ is an $\E(X)$-equivalence by Theorem~\ref{the:E-iso_finite}.  Theorem~\ref{the:E_versus_KK_Hausdorff} yields $\E_*(X;C,D)\cong\KK_*(X;C,D)$ for $C,D\in\{A,B\}$ because $A$ and~$B$ are nuclear and continuous $\Cont_0(X)$-algebras.  Hence $0\in\KK(X;A,B)$ is a $\KK(X)$-equivalence, and we may apply the main result of~\cite{Kirchberg:Michael} to conclude that
  \(A\otimes \Cuntza_\infty\otimes \Comp\cong B\otimes \Cuntza_\infty \otimes \Comp\).
\end{proof}

\section{The \texorpdfstring{$\E$}{E}-theoretic bootstrap category}
\label{sec:bootstrap_E}

Recall that the bootstrap class~\(\Bootstrap\) in~\(\KKcat\) is the localising subcategory of the triangulated category~\(\KKcat\) that is generated by the object~\(\C\).  Similarly, we define the \(\E\)\nb-theoretic bootstrap class \(\BootstrapE\subseteq \Ecat\) as the localising subcategory of~\(\Ecat\) generated by~\(\C\).  This is the class of all separable \(\Cst\)\nb-algebras~\(A\) for which \(\E_*(A,B)\) fulfills the Universal Coefficient Theorem for all~\(B\).

For a finite topological space~\(X\), a bootstrap class~\(\Bootstrap(X)\) in \(\KKcat(X)\) is defined in~\cite{Meyer-Nest:Bootstrap} along similar lines.  Here we follow a different approach:

\begin{definition}
  \label{def:Bootstrap_E}
  Let \(\BootstrapE(X)\subseteq \Ecat(X)\) for a second countable topological space~\(X\) be the class of all separable \(\Cst\)\nb-algebras~\(A\) over~\(X\) with \(A(U)\in\BootstrapE\) for all \(U\in\Open(X)\).
\end{definition}

Since the functors \(\Ecat(X)\to\Ecat\), \(A\mapsto A(U)\), are triangulated and commute with direct sums and~\(\BootstrapE\) is a localising subcategory of~\(\Ecat\), \(\BootstrapE(X)\) is a localising subcategory of \(\Ecat(X)\).  Furthermore, if \(A\in\BootstrapE(X)\), then \(A(Y)\in\BootstrapE\) for all locally closed subsets \(Y\subseteq X\) because of the extension \(A(U)\into A(V)\prto A(Y)\) with \(Y=V\setminus U\) and suitable open subsets \(U\) and~\(V\) in~\(X\).

\begin{proposition}
  \label{pro:finite_bootstrapE}
  Let~\(X\) be a finite topological space and let~\(A\) be a separable \(\Cst\)\nb-algebra over~\(X\).  Then \(A\in\BootstrapE(X)\) if and only if \(A(\cl{\{x\}})\in\BootstrapE\) for all \(x\in X\).
\end{proposition}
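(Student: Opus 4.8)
The plan is to handle the two implications separately: the forward one is immediate, while the converse rests on a double induction over the combinatorial structure of the finite space. For the forward implication, suppose $A\in\BootstrapE(X)$. The remark following Definition~\ref{def:Bootstrap_E} records that then $A(Y)\in\BootstrapE$ for every locally closed $Y\subseteq X$; since each $\cl{\{x\}}$ is closed, hence locally closed, this gives $A(\cl{\{x\}})\in\BootstrapE$ at once.

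For the converse I would first reduce to the $\textup T_0$ case: a $\Cst$\nb-algebra over~$X$ depends only on the lattice $\Open(X)$, which is unchanged by passing to the $\textup T_0$\nb-quotient, and this quotient alters neither hypothesis nor conclusion. So assume $X$ is a finite $\textup T_0$\nb-space and write $U_x$ for the minimal open neighbourhood of a point~$x$. The two elementary facts I would isolate are that $y\in\cl{\{x\}}$ if and only if $x\in U_y$, whence $\{x\}=U_x\cap\cl{\{x\}}$ is \emph{open} in the closed subspace $\cl{\{x\}}$ (so that $x$ is its generic point), and that $\cl{\{y\}}\subsetneq\cl{\{x\}}$ for every $y\in\cl{\{x\}}\setminus\{x\}$.

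The engine of the proof is the following reduction: \emph{if every fibre $A(\{x\})$, $x\in X$, lies in~$\BootstrapE$, then $A(Y)\in\BootstrapE$ for every locally closed $Y\subseteq X$.} I would prove this by induction on $\abs{Y}$. Choosing a point~$z$ that is open in the finite space~$Y$ (a maximal point for the specialisation order, so that $U_z\cap Y=\{z\}$) yields an extension of $\Cst$\nb-algebras
\[
A(\{z\}) \into A(Y) \prto A(Y\setminus\{z\}),
\]
and hence, by exactness of $\E$\nb-theory, an exact triangle in~$\Ecat$. The ideal lies in~$\BootstrapE$ by assumption and the quotient by the inductive hypothesis; since $\BootstrapE$ is localising, and therefore closed under extensions, so does $A(Y)$. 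Applied to open subsets, this already gives $A\in\BootstrapE(X)$ once the fibres are known to lie in~$\BootstrapE$. It thus remains to show $A(\{x\})\in\BootstrapE$, which I would do by induction on $\abs{\cl{\{x\}}}$. With $S\defeq\cl{\{x\}}$, the facts above show that $\{x\}$ is open in~$S$ with closed complement, giving the extension $A(\{x\})\into A(S)\prto A(S\setminus\{x\})$. Here $A(S)=A(\cl{\{x\}})\in\BootstrapE$ by hypothesis, while every $y\in S\setminus\{x\}$ satisfies $\cl{\{y\}}\subsetneq\cl{\{x\}}$, so $A(\{y\})\in\BootstrapE$ by the inductive hypothesis; the reduction above then places $A(S\setminus\{x\})$ in~$\BootstrapE$. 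Two of the three terms of the associated exact triangle lie in~$\BootstrapE$, forcing $A(\{x\})\in\BootstrapE$ as well, which closes the induction.

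The main obstacle, such as it is, is purely the bookkeeping of finite $\textup T_0$\nb-spaces: verifying that $x$ is the generic point of $\cl{\{x\}}$ so that the decisive extension $A(\{x\})\into A(\cl{\{x\}})\prto A(\cl{\{x\}}\setminus\{x\})$ is available, and that passing from~$x$ to the strictly smaller points $y\in\cl{\{x\}}\setminus\{x\}$ strictly shrinks the closure, so that both inductions terminate. All the $\Cst$\nb-algebraic and homological input—existence of the extension triangles, and closure of the localising subcategory $\BootstrapE$ under extensions—is already in place.
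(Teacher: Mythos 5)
Your proof is correct and takes essentially the same route as the paper's: both prove the non-trivial direction by induction over the finite space, pivoting on the extension \(A(\{x\})\into A(\cl{\{x\}})\prto A(\cl{\{x\}}\setminus\{x\})\) at the generic point of a point closure and on the two-out-of-three property of the localising subcategory \(\BootstrapE\) in exact (extension) triangles, so as to pass from point closures to singletons and then to all locally closed subsets. The differences are purely organisational: the paper runs a single induction on the length of specialisation chains, removing the whole discrete set of open points of a closed subset at once, whereas you run two nested cardinality inductions peeling off one maximal point at a time (and you make the harmless reduction to the \(\textup T_0\)-quotient explicit, which the paper leaves implicit).
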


If~\(A\) is tight, that is, the map \(\Prim(A)\to X\) is a homeomorphism, then the \(\Cst\)\nb-algebras \(A(\cl{\{x\}})\) for \(x\in X\) are precisely the \emph{prime quotients} of~\(A\).

\begin{proof}
  Since~\(\BootstrapE\) is triangulated, the class~\(\Good\) of locally closed subsets~\(Y\) of~\(X\) with \(A(Y)\in\BootstrapE\) has the following property: if \(Y\subseteq Z\) and if two of \(Y,Z,Z\setminus Y\) belong to~\(\Good\), then so does the third.  We are going to prove that a set~\(\Good\) of subsets must contain all locally closed subsets if it has this two-out-of-three property and contains all point closures \(\cl{\{x\}}\).  The proof is by induction on the length of the subspace~\(\cl{Y}\), that is, the length of the largest chain \(x_0\prec x_1\prec\dotsb\prec x_\ell\) in the specialisation preorder on the closure~\(\cl{Y}\).  If \(\ell=0\), the subspace~\(Y\) is a set of closed points of~\(X\), and the assertion is easy.

  Let~\(Y\) be a locally closed subset of~\(X\) of length~\(\ell\).  Then \(Y=\cl{Y}\setminus \partial Y\), so that it suffices to prove \(\cl{Y},\partial Y\in\Good\).  Therefore, we may assume without loss of generality that~\(Y\) is closed.  Let \(Z\subseteq Y\) be the set of all open points of~\(Y\).  The difference \(Y\setminus Z\) has length~\(\ell-1\) and is therefore good by induction assumption.  If \(x\in Z\), then the closure \(\cl{\{x\}}\) is good by assumption, and \(\cl{\{x\}} \setminus \{x\}\) is good because its length is at most~\(\ell-1\).  Hence \(\{x\}\) is good for all \(x\in Z\).  Since~\(Z\) is discrete, it follows that~\(Z\) is good.  Hence so is~\(Y\).
\end{proof}

Similarly, if~\(X\) is finite, then \(A\in\BootstrapE(X)\) if and only if \(A(U_x)\in\BootstrapE\) for all \(x\in X\), where~\(U_x\) denotes the minimal open subset of~\(X\) containing~\(x\).

Proposition~\ref{pro:finite_bootstrapE} remains true for some infinite spaces~\(X\) as well.  For instance, let~$X$ be a finite-dimensional, compact, metrisable Hausdorff space.  It is proved in~\cite{Dadarlat:Fiberwise_KK} that a continuous, separable and nuclear $\Cont(X)$-algebra~$A$ lies in the bootstrap class~$\Bootstrap$ if all its fibres $A(x) = A(\cl{\{x\}})$ are in~$\Bootstrap$.  Applying this to all closed subsets of~$X$, we get $A\in\BootstrapE(X)$ under the same assumptions.

For finite spaces~\(X\), we may also describe the bootstrap class in terms of generators.  For \(x\in X\) and a \(\Cst\)\nb-algebra~\(A\), let~\(i_x A\) be~\(A\) viewed as a \(\Cst\)\nb-algebra over~\(X\) concentrated over \(x\in X\), that is, \(i_x(A)(U) = A\) for \(x\in U\) and \(i_x(A)(U) = 0\) for \(x\notin U\).  This \(\Cst\)\nb-algebra over~\(X\) satisfies
\[
\KK_*(X;i_x(A),B) \cong \KK_*\bigl(A,B(U_x)\bigr)
\]
for all~\(B\) by \cite{Meyer-Nest:Bootstrap}*{Proposition 3.13}.  The same argument with \(\E\)\nb-theory instead of \(\KK\)-theory yields
\begin{equation}
  \label{eq:KK_ix}
  \E_*(X;i_x(A),B) \cong \E_*\bigl(A,B(U_x)\bigr)
\end{equation}
for \(x\in X\), a \(\Cst\)\nb-algebra~\(A\) and a \(\Cst\)\nb-algebra~\(B\) over~\(X\).  Here~\(U_x\) denotes the minimal open neighbourhood of~\(x\), which exists because~\(X\) is finite.  Furthermore,
\begin{equation}
  \label{eq:KK_ix_opposite}
  \E_*\bigl(X;A,i_x(B)\bigr)
  \cong \E_*\bigl(A\bigl(\cl{\{x\}}\bigr),B\bigr)
\end{equation}
as in~\cite{Meyer-Nest:Bootstrap}, even for infinite~\(X\), but we will not use this in the following.

\begin{proposition}
  \label{pro:Bootstrap_generators_finite}
  Let~\(X\) be a finite topological space.  Then \(\BootstrapE(X)\) is the localising subcategory of \(\Ecat(X)\) that is generated by~\(i_x\C\) for all \(x\in X\).  The whole category \(\Ecat(X)\) is generated by \(\Cst\)\nb-algebras of the form~\(i_xA\) for separable \(\Cst\)\nb-algebras~\(A\) and \(x\in X\).
\end{proposition}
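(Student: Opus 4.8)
The plan is to prove both assertions at once by induction on the number of points of~$X$, peeling off one open point at a time. Since $\Ecat(X)$ depends only on $\Open(X)$ and the point\nb-objects $i_xA$ factor through the $\textup T_0$\nb-quotient, I would first reduce to the case where~$X$ is $\textup T_0$. Orienting the specialisation order so that closed points are minimal and open points are maximal, a finite nonempty space has a maximal element~$x_0$, i.e.\ an open point with minimal open neighbourhood~$\{x_0\}$. Setting $Y\defeq X\setminus\{x_0\}$, the open subset $\{x_0\}$ gives an extension $A(\{x_0\})\into A\prto A(Y)$ of \(\Cst\)\nb-algebras over~$X$, hence an extension triangle
\(S\,A(Y)\to A(\{x_0\})\to A\to A(Y)\)
in the triangulated category $\Ecat(X)$ (Theorem~\ref{the:E-triangulated}); the whole argument then runs ``two\nb-out\nb-of\nb-three'' along this triangle.

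First I would record the two functorial inputs. The assignment $i_x\colon\Ecat\to\Ecat(X)$ is a triangulated functor commuting with countable direct sums: it carries an extension $I\into D\prto D/I$ either to the same extension (if $x\in U$) or to $0\into0\prto0$, commutes with $\otimes\Comp$, hence descends to \(\E\)\nb-theory by the universal property of Theorem~\ref{the:equivariant_E_universal}, and clearly $i_x(\bigoplus D_n)=\bigoplus i_xD_n$. For the closed inclusion $Y\hookrightarrow X$ I would introduce the pushforward $\iota_*\colon\Ecat(Y)\to\Ecat(X)$, $\iota_*(B)(V)\defeq B(V\cap Y)$ for $V\in\Open(X)$; the map $V\mapsto V\cap Y$ preserves finite infima and arbitrary suprema and carries extensions to extensions, so $\iota_*$ is again triangulated and sum\nb-preserving. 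The two compatibilities to check are that $i_x$ sends $\C$ to $i_x\C$ and that $\iota_*$ sends $i^Y_yA'$ to $i^X_yA'$; both are immediate from the definitions. Finally, I would identify the subquotients of the peeling extension: $A(\{x_0\})$ viewed over~$X$ is exactly $i_{x_0}\bigl(A(\{x_0\})\bigr)$, and $A(Y)$ viewed over~$X$ is exactly $\iota_*\bigl(r_X^YA\bigr)$, using $A(V\cap Y)=A(V)/A(V\cap\{x_0\})$ for the quotient structure.

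For the second assertion the base case $\lvert X\rvert=1$ is trivial, since there $i_xA'=A'$. In the inductive step, $r_X^YA$ lies in the localising subcategory of $\Ecat(Y)$ generated by the $i^Y_yA'$ (induction hypothesis), so $A(Y)=\iota_*(r_X^YA)$ lies in the localising subcategory of $\Ecat(X)$ generated by the $i^X_yA'$, while $A(\{x_0\})=i_{x_0}(\cdots)$ is itself such a generator. Two\nb-out\nb-of\nb-three along the extension triangle then places~$A$ in the localising subcategory generated by all $i_xA'$, proving that these objects generate $\Ecat(X)$.

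For the first assertion I would run the same induction while tracking the bootstrap condition. The inclusion ``$\supseteq$'' holds because each $i_x\C\in\BootstrapE(X)$: by Proposition~\ref{pro:finite_bootstrapE} it suffices that $i_x(\C)(\cl{\{y\}})\in\BootstrapE$ for all~$y$, and this algebra is either $\C$ or~$0$; since $\BootstrapE(X)$ is localising, it then contains the subcategory generated by the $i_x\C$. For ``$\subseteq$'', let $A\in\BootstrapE(X)$. Then $A(\{x_0\})\in\BootstrapE$ by Definition~\ref{def:Bootstrap_E}, so $i_{x_0}(A(\{x_0\}))$ lies in the localising subcategory generated by $i_{x_0}\C$ (as $i_{x_0}$ is triangulated, sum\nb-preserving, and sends $\C$ to $i_{x_0}\C$); and $r_X^YA\in\BootstrapE(Y)$ because its distinguished ideals are the values $A(V\cap Y)$ of~$A$ on locally closed subsets, which lie in $\BootstrapE$. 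By induction $r_X^YA$ lies in the localising subcategory of $\Ecat(Y)$ generated by the $i^Y_y\C$, so $A(Y)=\iota_*(r_X^YA)$ lies in the one generated by the $i^X_y\C$, and two\nb-out\nb-of\nb-three finishes the step. The hard part is entirely bookkeeping: checking cleanly that $i_x$ and the closed\nb-inclusion pushforward $\iota_*$ descend to triangulated, sum\nb-preserving functors on \(\E\)\nb-theory and correctly interchange the point\nb-objects, together with the identifications $A(\{x_0\})=i_{x_0}(A(\{x_0\}))$ and $A(Y)=\iota_*(r_X^YA)$; once these are established, the triangulated argument is routine.
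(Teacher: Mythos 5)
Your proof is correct and in substance coincides with the paper's: the paper simply transports the filtration argument of \cite{Meyer-Nest:Bootstrap}*{Proposition 4.7} to \(\Ecat(X)\) (noting that admissibility hypotheses are unnecessary because \(\E\)\nb-theory is excisive for all extensions), and that argument is precisely the decomposition you carry out by hand, namely writing~\(A\) as an iterated extension with subquotients \(i_x\bigl(A(\{x\})\bigr)\) and applying two\nb-out\nb-of\nb-three along extension triangles in the localising subcategory. Your induction on \(\lvert X\rvert\), peeling off an open point and pushing forward along the closed complement via \(\iota_*\), is just an unrolled form of the same filtration, so the only real difference is that your write-up is self-contained where the paper delegates to the cited result.
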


\begin{proof}
  It is clear that \(i_x\C\in\BootstrapE(X)\) and that \(\BootstrapE(X)\) is localising, so that it contains the localising subcategory generated by~\(i_x\C\) for \(x\in X\).  The same proof as for \cite{Meyer-Nest:Bootstrap}*{Proposition 4.7} shows that a \(\Cst\)\nb-algebra~\(A\) over~\(X\) belongs to the localising subcategory of \(\Ecat(X)\) generated by \(i_x\bigl(A(x)\bigr)\) for all \(x\in X\).  The admissibility assumptions in~\cite{Meyer-Nest:Bootstrap} are only needed for \(\KK\), they become automatic in \(\E\)\nb-theory.  In particular, this shows that \(\Ecat(X)\) is generated by \(\Cst\)\nb-algebras of the form~\(i_xA\), while \(\BootstrapE(X)\) is generated by~\(i_xA\) with \(A\in\BootstrapE\).  Since~\(\BootstrapE\) is generated by~\(\C\), we may replace the set of~\(i_xA\) with \(A\in\BootstrapE(X)\) by~\(i_x\C\) here.
\end{proof}

\begin{theorem}
  \label{the:bootstrap_complement}
  Let~\(X\) be a second countable topological space and let \(A\) and~\(B\) belong to \(\BootstrapE(X)\).  An element in \(\E_*(X;A,B)\) is invertible if and only if it induces invertible maps \(\K_*\bigl(A(U)\bigr) \to \K_*\bigl(B(U)\bigr)\) for all \(U\in\Open(X)\).
\end{theorem}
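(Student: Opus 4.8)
The plan is to combine the general invertibility criterion of Theorem~\ref{the:E-iso_finite} with the classical Universal Coefficient Theorem for non-equivariant \(\E\)\nb-theory, applied separately to each distinguished ideal \(A(U)\).

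The necessity of the condition is immediate. If an element \(\alpha\in\E_*(X;A,B)\) is invertible, then so is its image \(\alpha_U\) in \(\E_*\bigl(A(U),B(U)\bigr)\) for every \(U\in\Open(X)\) by functoriality of the restriction maps, and functoriality of \(\K\)\nb-theory with respect to \(\E\)\nb-theory then forces the induced maps \(\K_*\bigl(A(U)\bigr)\to\K_*\bigl(B(U)\bigr)\) to be invertible.

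For the converse, I would first invoke Theorem~\ref{the:E-iso_finite}, which reduces invertibility of \(\alpha\) in \(\E_*(X;A,B)\) to invertibility of each image \(\alpha_U\in\E_*\bigl(A(U),B(U)\bigr)\) as \(U\) ranges over \(\Open(X)\).  The crucial use of the hypotheses is that \(A,B\in\BootstrapE(X)\) guarantees \(A(U),B(U)\in\BootstrapE\) for all \(U\in\Open(X)\), so that every pair \(\bigl(A(U),B(U)\bigr)\) satisfies the Universal Coefficient Theorem.  By the standard consequence of the UCT --~for \(\Cst\)\nb-algebras in the bootstrap class an \(\E\)\nb-theory element is invertible precisely when the \(\K\)\nb-theory map it induces is an isomorphism~-- invertibility of \(\alpha_U\) is equivalent to invertibility of the induced map \(\K_*\bigl(A(U)\bigr)\to\K_*\bigl(B(U)\bigr)\).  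Since this is exactly the map appearing in the statement, combining the two equivalences over all \(U\in\Open(X)\) yields the theorem.

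The argument is essentially formal once these two ingredients are in place; the only point requiring care is the identification of the \(\K\)\nb-theory map induced by \(\alpha_U\) with the map \(\K_*\bigl(A(U)\bigr)\to\K_*\bigl(B(U)\bigr)\) named in the statement, which follows from naturality of the restriction functors together with naturality of the comparison between \(\E\)\nb-theory and \(\K\)\nb-theory.  I do not anticipate a genuine obstacle here, since the substantial work has already been isolated in Theorem~\ref{the:E-iso_finite}; this result should be viewed as the bootstrap-class refinement of that theorem, trading \(\E\)\nb-equivalence of the ideals \(A(U)\) for the more readily verifiable condition of \(\K\)\nb-theory isomorphism on every \(A(U)\).
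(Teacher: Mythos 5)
Your proposal is correct and follows essentially the same route as the paper: the paper's proof likewise combines the invertibility criterion of Theorem~\ref{the:E-iso_finite}, the definition of \(\BootstrapE(X)\) (which puts each \(A(U)\), \(B(U)\) in \(\BootstrapE\)), and the standard UCT-type fact that for bootstrap-class \(\Cst\)\nb-algebras an \(\E\)\nb-theory element inducing a \(\K\)\nb-theory isomorphism is invertible. Your write-up is simply a more detailed account of the same argument.
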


\begin{proof}
  It is well-known that an element in \(\KK_*(A,B)\) that induces an isomorphism on \(\K\)\nb-theory is invertible in \(\KK\) provided \(A\) and~\(B\) belong to the bootstrap category.  The same argument applies to \(\E\)\nb-theory.  Finally, apply Theorem~\ref{the:E-iso_finite} and the definition of \(\BootstrapE(X)\).
\end{proof}

\section{Comparing \texorpdfstring{$\KK$}{KK}- and \texorpdfstring{$\E$}{E}-theory}
\label{sec:cp_KK}

In the definition of \(\E\)\nb-theory, we may restrict attention to asymptotic morphisms~\(\varphi\) for which the maps~\(\varphi_t\) are all completely positive contractions.  It is shown by Houghton-Larsen and Thomsen \cite{Houghton-Larsen-Thomsen:Universal} that the resulting variant of \(\E\)\nb-theory agrees with Kasparov's \(\KK\).  A corresponding result for equivariant \(\KK\)- and \(\E\)\nb-theory is established by Thomsen in~\cite{Thomsen:Asymptotic_KK}.  It is a routine exercise to show that the same works in our situation.

\begin{definition}
  \label{def:cp_asm}
  Let \(\asm{A,B}^\cp\) denote the space of homotopy classes of \(X\)\nb-equi\-variant, completely positive, linear, contractive asymptotic morphisms~\(\varphi\) from~$A$ to~\(B\), where homotopy is defined using \(X\)\nb-equivariant, completely positive, linear, contractive asymptotic morphisms \(A\to\Cont_\bo\bigl(T,\Cont([0,1],B)\bigr)\).  \(X\)\nb-equivariance means \(\varphi\bigl(A(U)\bigr) \subseteq \Cont_\bo\bigl(T,B(U)\bigr)\) for all \(U\in\Open(X)\).
\end{definition}

The map \(\varphi\colon A\to\Cont_\bo(T,B)\) is an \(X\)\nb-equivariant, completely positive, linear contraction if and only if all the individual maps \(\varphi_t\colon A\to B\) are \(X\)\nb-equivariant, completely positive, linear contractions.

\begin{theorem}
  \label{the:KK_via_asm}
  There is a natural isomorphism
  \[
  \KK_0(X;A,B) \cong \asm{\Cont_0(\R,A)\otimes\Comp, \Cont_0(\R,B)\otimes\Comp}^{\cp}.
  \]
\end{theorem}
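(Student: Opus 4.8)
The plan is to follow the method by which Houghton-Larsen and Thomsen~\cite{Houghton-Larsen-Thomsen:Universal} identify the completely positive variant of \(\E\)\nb-theory with \(\KK\), together with its equivariant refinement in~\cite{Thomsen:Asymptotic_KK}, while carrying the extra bookkeeping needed to keep every construction \(X\)\nb-equivariant. The cleanest route is through universal properties. First I would check that the \(X\)\nb-equivariant completely positive contractive asymptotic morphisms organise into a category \(\Ecat^\cp(X)\) receiving a canonical functor from \(\Cstarsep(X)\), and that this functor is a \(\Cst\)\nb-stable homotopy functor which is exact for those extensions \(I\into E\prto Q\) of \(\Cst\)\nb-algebras over~\(X\) that admit an \(X\)\nb-equivariant completely positive contractive section. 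Since \(\KKcat(X)\) is characterised by exactly this universal property (split-exactness being equivalent here to exactness for such cp-split extensions; compare \cite{Meyer-Nest:Bootstrap}), matching the two universal properties produces the natural isomorphism \(\KK_0(X;A,B)\cong\E_0^\cp(X;A,B)\), where \(\E_0^\cp(X;A,B)\) denotes the right-hand side, defined in analogy with Definition~\ref{def:E_X}.

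The first task is the composition product. Here I would rerun the proof of Lemma~\ref{lem:composition_X-equivariant}, except that the equivariance hypothesis is now the exact inclusion \(\varphi\bigl(A(U)\bigr)\subseteq\Cont_\bo\bigl(T,B(U)\bigr)\), so the residual-norm estimates simplify considerably; the genuinely new point is that the reparametrised composite \(\psi_{r(t)}\circ\varphi_t\) stays completely positive and contractive, which is immediate since compositions of completely positive contractions are again completely positive contractions and the reparametrisation in~\(t\) preserves this pointwise. \(\Cst\)\nb-stability and homotopy invariance hold by construction, exactly as for the full theory; identities are the constant \Star{}homomorphisms, and associativity follows as in the non-equivariant case.

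Next I would establish exactness and universality. Given an extension of \(\Cst\)\nb-algebras over~\(X\) admitting an \(X\)\nb-equivariant completely positive contractive section, the Connes--Higson construction of Proposition~\ref{proposition:extensions_X} supplies the boundary asymptotic morphism; using a quasicentral approximate unit together with the equivariant completely positive section, this boundary map can be chosen completely positive and contractive, and it is \(X\)\nb-equivariant by the same computation as in Proposition~\ref{proposition:extensions_X}, now with exact rather than approximate equivariance. The universality is then verified by adapting Lemma~\ref{lemma:universality_E_X} and the proof of Theorem~\ref{the:equivariant_E_universal}: every class factors as \([\rho]\circ[\pi]^{-1}\) with \(\rho\) and~\(\pi\) arising from \(X\)\nb-equivariant \Star{}homomorphisms, and complete positivity is automatic on this factorisation because \Star{}homomorphisms are completely positive contractions.

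The main obstacle lies in the comparison going from \(\Ecat^\cp(X)\) back to \(\KKcat(X)\), namely producing a \(\KK(X)\)\nb-cycle from a completely positive contractive \(X\)\nb-equivariant asymptotic morphism via a Stinespring-type dilation, and checking that this dilation can be carried out \emph{equivariantly}. Concretely, one must arrange the dilating Hilbert \(B\)\nb-module and the dilated representation so that the filtration by the ideals \(B(U)\) is respected, equivalently so that the induced maps into the quotients \(B(S)=B/B(X\setminus S)\) are compatible with the dilation for every closed \(S\subseteq X\). This is precisely where the complete positivity hypothesis is used, and verifying that the dilation of~\cite{Thomsen:Asymptotic_KK} is compatible with the quotient maps \(B\to B(S)\) is the only step requiring genuine care; once it is in place, the two universal properties match and the natural isomorphism follows.
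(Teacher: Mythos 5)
Your first three paragraphs are in substance the paper's proof: the argument there is precisely to rerun the proof of Theorem~\ref{the:equivariant_E_universal} for the completely positive variant, checking that \(\asm{\Cont_0(\R,A)\otimes\Comp,\Cont_0(\R,B)\otimes\Comp}^{\cp}\) is the universal \(\Cst\)\nb-stable homotopy functor on \(\Cstarsep(X)\) that is exact for extensions of \(\Cst\)\nb-algebras over~\(X\) admitting an \(X\)\nb-equivariant completely positive contractive section, and then to invoke the fact that \(\KKcat(X)\) satisfies the same universal property, following \cites{Houghton-Larsen-Thomsen:Universal, Thomsen:Asymptotic_KK}.  Your handling of the composition product, of the completely positive Connes--Higson map, and of the factorisation \([\rho]\circ[\pi]^{-1}\) of Lemma~\ref{lemma:universality_E_X} is what the paper means by ``copying'' those proofs; note that the decisive use of complete positivity is that the canonical extension \(\Cont_0(T,B)\into E\prto A\) attached to a cp asymptotic morphism~\(\varphi\) is cp-split over~\(X\), the section being \(a\mapsto(a,\varphi(a))\) (compare the proof of Corollary~\ref{cor:E_versus_KK}).

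Your final paragraph, however, gets the logical structure backwards, and this is where the proposal has a (removable) gap.  No Stinespring-type dilation of an asymptotic morphism into a \(\KK(X)\)-cycle is needed, and the paper performs none.  Once the cp theory is known to satisfy its universal property, the functor \(\Ecat^{\cp}(X)\to\KKcat(X)\) comes for free by taking \(\KK(X;\blank,\blank)\) as the test functor: a class \([\rho]\circ[\pi]^{-1}\) is sent to \(\KK(\rho)\circ\KK(\pi)^{-1}\), and \(\KK(\pi)\) is invertible because the defining extension is \(X\)\nb-equivariantly cp-split and \(\Cont_0(T,B)\) is contractible.  The only inputs about \(\KKcat(X)\) are that it is \(\Cst\)\nb-stable, homotopy invariant, and has six-term exact sequences for extensions with an \(X\)\nb-equivariant completely positive contractive section --- standard facts established via Kasparov's technical theorem at the level of single cp maps, never of asymptotic families (see \cite{Kirchberg:Michael}, \cite{Bonkat:Thesis}).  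The functor in the other direction comes from the universal property of \(\KKcat(X)\), since the cp theory is split-exact and stable, and the two composites are identities by the uniqueness clauses of the two universal properties.  Thus the step you single out as ``the only step requiring genuine care'' is not part of the argument at all, and making the conclusion conditional on it (``once it is in place, the two universal properties match'') inverts the actual dependence: the universal properties match first, and any cycle-level comparison is a consequence, not a prerequisite.  This is also the content of the paper's closing remark that the situation here is closer to the non-equivariant case of \cite{Houghton-Larsen-Thomsen:Universal} than to the group-equivariant case: the delicate equivariant dilation and approximate-unit issues you anticipate simply do not arise over a space~\(X\).
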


\begin{proof}
  Copy the proofs of the corresponding assertions for non-equivariant Kasparov theory and equivariant Kasparov theory for group actions in \cites{Houghton-Larsen-Thomsen:Universal, Thomsen:Asymptotic_KK}.  The main point is to go through the proof of the universal property of \(\E\)\nb-theory and to check that the variant \(\asm{\Cont_0(\R,A)\otimes\Comp, \Cont_0(\R,B)\otimes\Comp}^\cp\) satisfies an analogous universal property, but with exactness only for extensions of \(\Cst\)\nb-algebras over~\(X\) with a completely positive contractive section over~\(X\).  Since \(\KKcat(X)\) satisfies the same universal property, the two theories must be naturally isomorphic.

  Our case is somewhat closer to case of non-equivariant \(\KK\) in~\cite{Houghton-Larsen-Thomsen:Universal} because some issues like Hilbert space representations of groups and equivariance of approximate units do not occur.
\end{proof}

\begin{corollary}
  \label{cor:E_versus_KK}
  Let~\(X\) be a second countable  topological space and let \(A\) be a \(\Cst\)\nb-algebra over~\(X\) which is $\KK(X)$-equivalent to a \(\Cst\)\nb-algebra over~\(X\), \(A^\prime\) such that any extension \(I\into E\prto \Cont_0(\R,A^\prime)\otimes\Comp\) of \(\Cst\)\nb-algebras over~\(X\) has an \(X\)\nb-equivariant completely positive contractive linear section.  Then the canonical map \(\KK_0(X;A,B) \to \E_0(X;A,B)\) is an isomorphism for any \(\Cst\)\nb-algebra~\(B\) over~\(X\).
\end{corollary}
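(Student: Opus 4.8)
The plan is to prove the isomorphism at the level of asymptotic morphisms. Using Theorem~\ref{the:KK_via_asm} to model \(\KK_0(X;A,B)\) by \(\asm{\Cont_0(\R,A)\otimes\Comp,\Cont_0(\R,B)\otimes\Comp}^{\cp}\) and Definition~\ref{def:E_X} to model \(\E_0(X;A,B)\) by \(\asm{\Cont_0(\R,A)\otimes\Comp,\Cont_0(\R,B)\otimes\Comp}\), the canonical map becomes simply the map that forgets complete positivity; note that a genuinely \(X\)\nb-equivariant, completely positive asymptotic morphism is approximately \(X\)\nb-equivariant by Lemma~\ref{lemma:on_X_equiv}. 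First I would reduce to the case \(A=A'\). The canonical functor \(\KKcat(X)\to\Ecat(X)\) sends the given \(\KK(X)\)-equivalence \(A\to A'\) to an \(\E(X)\)-equivalence, and the comparison map is natural in~\(A\); the resulting naturality square has both vertical maps invertible, so its top arrow is an isomorphism if and only if its bottom arrow is. Hence we may assume that \(A=A'\) itself carries the stated lifting property.

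For surjectivity, represent a given class in \(\E_0(X;A,B)\) by an approximately \(X\)\nb-equivariant asymptotic morphism~\(\varphi\) and form, as in the proof of Lemma~\ref{lemma:universality_E_X}, the pullback extension
\[
\Cont_0\bigl(T,\Cont_0(\R,B)\otimes\Comp\bigr)\into E\overset{\pi}\prto \Cont_0(\R,A)\otimes\Comp
\]
of \(\Cst\)\nb-algebras over~\(X\), with \(E=\{(a,b):\varphi(a)-b\in\Cont_0(T,\dotsc)\}\). The crucial point is that the quotient of this extension is exactly \(\Cont_0(\R,A)\otimes\Comp\), so the hypothesis on \(A=A'\) supplies an \(X\)\nb-equivariant completely positive contractive linear section~\(s\). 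Writing \(s(a)=(a,\sigma(a))\), the second component~\(\sigma\) is a completely positive contractive linear map that is \emph{genuinely} \(X\)\nb-equivariant (because \(s\) maps the \(A(U)\)-part into \(E(U)\)) and that is equivalent to~\(\varphi\) as an asymptotic morphism (because \((a,\sigma(a))\in E\)). Thus~\(\sigma\) is an \(X\)\nb-equivariant completely positive asymptotic morphism representing a class in \(\KK_0(X;A,B)\) whose image in \(\E_0(X;A,B)\) agrees with that of~\(\varphi\) by Remark~\ref{rem:approx_equivariant_equivalent}.

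For injectivity, a class in \(\KK_0(X;A,B)\) mapping to~\(0\) is represented by an \(X\)\nb-equivariant completely positive asymptotic morphism~\(\psi\) that is homotopic to~\(0\) through approximately \(X\)\nb-equivariant asymptotic morphisms; such a homotopy is an approximately \(X\)\nb-equivariant asymptotic morphism~\(H\) into \(\Cont_\bo\bigl(T,\Cont([0,1],\Cont_0(\R,B)\otimes\Comp)\bigr)\). I would run the same argument on~\(H\): the associated pullback extension again has quotient \(\Cont_0(\R,A)\otimes\Comp\), so the hypothesis yields an \(X\)\nb-equivariant completely positive section and hence a completely positive asymptotic morphism~\(\sigma_H\) equivalent to~\(H\). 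Its endpoint restrictions are completely positive asymptotic morphisms equivalent to~\(\psi\) and to~\(0\); since two equivalent completely positive asymptotic morphisms are joined by the affine completely positive homotopy \(r\mapsto r\sigma+(1-r)\psi\), concatenating shows that~\(\psi\) is completely positively homotopic to~\(0\), so the original class vanishes in \(\KK_0(X;A,B)\).

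The main obstacle I anticipate is bookkeeping rather than a new idea: one must check that the section trick really produces a genuinely (not merely approximately) \(X\)\nb-equivariant completely positive asymptotic morphism, which is exactly where \(X\)\nb-equivariance of the section~\(s\), and not just of~\(\varphi\), is used, and that in the injectivity step the endpoints of the homotopy can be matched with~\(\psi\) and~\(0\) up to affine completely positive homotopy. Everything else is formal, granted Theorem~\ref{the:KK_via_asm}, Lemma~\ref{lemma:universality_E_X} and Lemma~\ref{lemma:pullbacKs_are_OK}.
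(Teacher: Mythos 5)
Your proof is correct and is essentially the paper's own argument: after the same reduction to \(A=A'\), the paper also combines Theorem~\ref{the:KK_via_asm} with the observation that the extension canonically associated to an approximately \(X\)\nb-equivariant asymptotic morphism has quotient \(\Cont_0(\R,A)\otimes\Comp\), so the hypothesis supplies an \(X\)\nb-equivariant completely positive contractive section whose second component is a genuinely \(X\)\nb-equivariant completely positive representative of the same class, and the same device applied to homotopies yields bijectivity. The only difference is presentational: the paper compresses your surjectivity/injectivity bookkeeping into the single assertion that the hypothesis forces \(\asm{\Cont_0(\R,A)\otimes\Comp,D}^\cp = \asm{\Cont_0(\R,A)\otimes\Comp,D}\) for all \(D\) (which covers homotopies by taking \(D=\Cont([0,1],\Cont_0(\R,B)\otimes\Comp)\)), whereas you make the pullback description, the endpoint matching and the straight-line homotopies explicit.
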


\begin{proof} We may assume that $A=A^\prime$.
  Any asymptotic morphism is equivalent to one with~\(\varphi_0=0\) --~multiply pointwise with a suitable scalar-valued function.  Hence it makes no difference whether we assume this for the definition of \(\asm{A,B}\) and \(\asm{A,B}^\cp\).  An asymptotic morphism from~\(A\) to~\(B\) with \(\varphi_0=0\) generates an extension \(\Cont_0(T,B) \into E\prto A\) with \(E=\varphi(A)+\Cont_0(T,B) \subseteq \Cont_\bo(T,B)\), and two asymptotic morphisms generate the same extension if and only if they are equivalent.  The asymptotic morphism itself is a section for this extension.  The assumption of the corollary therefore implies \(\asm{\Cont_0(\R,A)\otimes\Comp,D}^\cp = \asm{\Cont_0(\R,A)\otimes\Comp,D}\) for all~\(D\).
\end{proof}

\begin{theorem}
  \label{the:E_versus_KK_Hausdorff}
  Let~\(X\) be a second countable locally compact Hausdorff space, let~\(A\) be a nuclear and continuous \(\Cst\)\nb-algebra over~\(X\), and let~\(B\) be any separable \(\Cst\)\nb-algebra over~\(X\).  Then the canonical map \(\KK_0(X;A,B) \to \E_0(X;A,B)\) is an isomorphism.
\end{theorem}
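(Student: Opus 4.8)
The plan is to deduce the statement from Corollary~\ref{cor:E_versus_KK} by taking $A' \defeq A$, so that no preliminary $\KK(X)$\nb-equivalence is needed. It then suffices to verify the hypothesis of that corollary: every extension $I \into E \prto Q$ of $\Cst$\nb-algebras over~$X$ with $Q \defeq \Cont_0(\R,A)\otimes\Comp$ admits an $X$\nb-equivariant completely positive contractive linear section. Since $A$ is nuclear, so is $Q$; and since $A$ is a continuous $\Cst$\nb-algebra over the locally compact Hausdorff space~$X$, the algebra $Q$ is again continuous over~$X$, with $Q(U) = \Cont_0\bigl(\R,A(U)\bigr)\otimes\Comp$. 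For locally compact Hausdorff~$X$ the category $\Cstarcat(X)$ is the category of $\Cont_0(X)$\nb-$\Cst$\nb-algebras, and any $\Cont_0(X)$\nb-linear completely positive contraction is in particular $X$\nb-equivariant, since $\varphi\bigl(\Cont_0(U)A\bigr)\subseteq\Cont_0(U)B = B(U)$. Thus the theorem reduces to a $\Cont_0(X)$\nb-equivariant version of the Choi--Effros lifting theorem for the separable, nuclear, continuous $\Cont_0(X)$\nb-algebra~$Q$.

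The first step I would carry out is to show that such a $Q$ is \emph{$\Cont_0(X)$\nb-nuclear}: the identity map of~$Q$ is a point-norm limit of $\Cont_0(X)$\nb-linear completely positive contractions that factor through $\Cont_0(X)\otimes\Mat_n$. Granting this, the classical Choi--Effros construction runs equivariantly. Given a finite subset $F\subseteq Q$ and $\varepsilon>0$, a factorisation $Q \xrightarrow{\phi} \Cont_0(X)\otimes\Mat_n \xrightarrow{\psi} Q$ with $\norm{\psi\phi(a)-a}<\varepsilon$ on~$F$ can be combined with a $\Cont_0(X)$\nb-linear completely positive contractive lift $\tilde\psi\colon \Cont_0(X)\otimes\Mat_n \to E$ of~$\psi$ through $\pi\colon E\prto Q$ --~such a lift exists because completely positive $\Cont_0(X)$\nb-linear maps out of $\Cont_0(X)\otimes\Mat_n$ are determined by finitely many positive matrix entries that lift along~$\pi$. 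The composite $\tilde\psi\phi$ is then an approximate $\Cont_0(X)$\nb-linear completely positive lift of $\id_Q$ on~$F$, and the usual patching via a quasi-central approximate unit of the ideal~$I$ --~which keeps the maps $\Cont_0(X)$\nb-linear, since $\Cont_0(X)$ acts centrally~-- assembles these approximate lifts into a genuine $\Cont_0(X)$\nb-linear completely positive contractive section, using separability of~$Q$ to make the construction sequential.

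The main obstacle is the $\Cont_0(X)$\nb-nuclearity step, and this is precisely where continuity of the field is indispensable: nuclearity of the fibres alone does not force $\Cont_0(X)$\nb-nuclearity over infinite-dimensional~$X$, as the continuous field with fibre~$\Cuntza_2$ and $\K_0\neq0$ from~\cite{Dadarlat:Fiberwise_KK} illustrates. For a \emph{continuous} nuclear field I would argue locally: using that~$X$ is second countable, locally compact and Hausdorff, cover a relatively compact region by finitely many open sets~$V_k$ on which nuclearity of~$Q$ together with continuity produces local completely positive factorisations through matrix algebras, choose a subordinate partition of unity~$(\chi_k)$, and glue the weighted maps $\chi_k^{1/2}\cdot$ into global $\Cont_0(X)$\nb-linear completely positive contractions $Q\to\Cont_0(X)\otimes\Mat_N$ and back. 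Continuity of $x\mapsto\norm{q(x)}$ guarantees that the glued factorisation approximates $\id_Q$ uniformly over each compact patch. I would isolate this $\Cont_0(X)$\nb-nuclearity assertion as a separate lemma and appeal to the known results on nuclearity of continuous $\Cont_0(X)$\nb-algebras (in the spirit of Bauval and of Blanchard--Kirchberg) rather than reproving them in full.
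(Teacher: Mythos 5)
Your proposal is correct and is essentially the paper's own proof (its second, ``alternative'' argument): the paper likewise reduces to Corollary~\ref{cor:E_versus_KK} with $A'=A$, invoking \cite{Bauval:KK(X)-nuc}*{Theorem~7.2} for the fact that a separable, nuclear, continuous $\Cont_0(X)$\nb-algebra is $\Cont_0(X)$\nb-nuclear. The extra material you supply --- the equivariant Choi--Effros lifting via matrix-entry lifts and quasi-central patching, and the partition-of-unity sketch of $\Cont_0(X)$\nb-nuclearity --- just fills in the steps the paper delegates to \cite{Bauval:KK(X)-nuc} and to the implicit equivariant lifting theorem behind the phrase ``satisfies the assumptions of Corollary~\ref{cor:E_versus_KK}.''
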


\begin{proof}
  The result follows from \cite{Park-Trout:Representable_E}*{Theorem~4.7}. Alternatively, we may argue that~$A$ is \(\Cont_0(X)\)-nuclear by \cite{Bauval:KK(X)-nuc}*{Theorem~7.2}, so that it satisfies the assumptions of Corollary~\ref{cor:E_versus_KK}.
\end{proof}

\begin{theorem}
  \label{the:E_versus_KK_finite}
  Let~\(X\) be a finite topological space and let \((A,\psi_A)\) and \((B,\psi_B)\) be \(\Cst\)\nb-algebras over~\(X\).  The canonical map
  \[
  \KK_*(X;A,B) \to \E_*(X;A,B)
  \]
  is an isomorphism if~\(A\) belongs to the bootstrap class in \(\KKcat(X)\) defined in~\cite{Meyer-Nest:Bootstrap}.  In particular, this applies if the \(\Cst\)\nb-algebra \(A(X)\) is nuclear.
\end{theorem}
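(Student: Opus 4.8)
The plan is to prove the isomorphism for \emph{all} $A$ in the bootstrap class by a localising-subcategory argument: reduce the comparison to the generators $i_x\C$, then to the non-equivariant theory over a one-point space, where nuclearity of $\C$ settles the matter; the nuclear case is then subsumed by the same mechanism. Throughout I use that there is a canonical triangulated functor $\KKcat(X)\to\Ecat(X)$ inducing the natural comparison transformation $\KK_*(X;A,B)\to\E_*(X;A,B)$.

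Let $\mathcal C$ denote the class of separable $\Cst$\nb-algebras $A$ over~$X$ for which the comparison map $\KK_*(X;A,B)\to\E_*(X;A,B)$ is an isomorphism for every separable $\Cst$\nb-algebra $B$ over~$X$. I would first check that $\mathcal C$ is a localising subcategory of $\KKcat(X)$. Since $\KKcat(X)\to\Ecat(X)$ is triangulated, an exact triangle in $\KKcat(X)$ yields long exact sequences under both $\KK_*(X;\blank,B)$ and $\E_*(X;\blank,B)$; as the comparison is a natural transformation of these homological functors, the five lemma gives the two-out-of-three property for $\mathcal C$. Closure under suspension is automatic because $\mathcal C$ is defined degreewise and both theories have $A\mapsto \Cont_0(\R,A)$ as translation. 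Finally, both $A\mapsto\KK_*(X;A,B)$ and $A\mapsto\E_*(X;A,B)$ are countably additive, sending countable direct sums to products (as in the additivity underlying Proposition~\ref{prop:Milnor_regular}), and the comparison respects these products; hence $\mathcal C$ is closed under countable direct sums.

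Next I would compute $\mathcal C$ on the generators. Using the natural isomorphism~\eqref{eq:KK_ix} together with its $\KK$\nb-theoretic counterpart $\KK_*(X;i_x(D),B)\cong\KK_*\bigl(D,B(U_x)\bigr)$ from \cite{Meyer-Nest:Bootstrap}, the comparison map for $i_x(D)$ is identified with the non-equivariant map $\KK_*\bigl(D,B(U_x)\bigr)\to\E_*\bigl(D,B(U_x)\bigr)$; here one only has to observe that both isomorphisms are instances of the same adjunction and are therefore compatible with the comparison transformation. For \emph{nuclear}~$D$ this non-equivariant map is an isomorphism: every extension of $\Cont_0(\R,D)\otimes\Comp$ admits a completely positive contractive section by the Choi--Effros lifting theorem, so Corollary~\ref{cor:E_versus_KK} applied over the one-point space (with $A'=D$, and using suspension to cover both degrees) gives $\KK_*(D,\blank)\cong\E_*(D,\blank)$. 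Consequently $i_x(D)\in\mathcal C$ for every nuclear~$D$ and every $x\in X$; in particular $i_x\C\in\mathcal C$.

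It remains to feed the generators into the localising argument. By the $\KK$\nb-theoretic analogue of Proposition~\ref{pro:Bootstrap_generators_finite}, proved in \cite{Meyer-Nest:Bootstrap}, the bootstrap class $\Bootstrap(X)\subseteq\KKcat(X)$ is the localising subcategory generated by $\{i_x\C : x\in X\}$; since $\mathcal C$ is localising and contains these objects, $\Bootstrap(X)\subseteq\mathcal C$, which proves the first assertion. For the ``in particular'' statement, the same source shows that an arbitrary $A$ over finite~$X$ lies in the localising subcategory generated by $\{i_x(A(\cl{\{x\}})) : x\in X\}$. If $A(X)$ is nuclear then each fibre $A(\cl{\{x\}})$, being a subquotient of~$A(X)$, is nuclear, so all these generators lie in $\mathcal C$ by the previous paragraph; hence $A\in\mathcal C$ as well. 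The step requiring the most care is the verification that $\mathcal C$ is genuinely localising---specifically the naturality needed to invoke the five lemma and the compatibility of~\eqref{eq:KK_ix} with the comparison transformation---together with the faithful transcription of the generation results of \cite{Meyer-Nest:Bootstrap} from the $\KK(X)$ setting, since everything else reduces to the classical non-equivariant fact that $\KK$ and~$\E$ agree on nuclear algebras.
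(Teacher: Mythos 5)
Your proof of the main assertion is correct, but it takes a genuinely different route from the paper's. The paper compares spectral sequences: for \(A\) in the Meyer--Nest bootstrap class, \(\KK_*(X;A,B)\) is computed by a spectral sequence whose first page involves only the \(\K\)\nb-theory of the minimal open pieces; since that machinery uses only the universal property, it runs verbatim in \(\Ecat(X)\), and the canonical functor \(\KKcat(X)\to\Ecat(X)\) induces a morphism of spectral sequences that is an isomorphism on first pages and hence on the abutments. Your d\'evissage --- show that the class \(\mathcal C\) on which the comparison map is invertible is localising, compute it on the generators \(i_x\C\) via \eqref{eq:KK_ix} and its \(\KK\)\nb-counterpart, reduce to the non-equivariant fact that \(\KK\) and \(\E\) agree on nuclear algebras, and conclude \(\langle i_x\C\rangle\subseteq\mathcal C\) --- is sound, avoids the convergence questions implicit in a spectral-sequence comparison, and closely parallels the paper's own proof of Theorem~\ref{the:E-iso_finite}. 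The compatibility you flag does hold, and for the reason you indicate: both adjunction isomorphisms are implemented by the functor \(C\mapsto C(U_x)\) together with the identification \(i_x(D)(U_x)=D\), and this functor intertwines \(\KKcat(X)\to\Ecat(X)\) with \(\KKcat\to\Ecat\) by uniqueness in the universal properties.

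The ``in particular'' part, however, has a genuine gap. You cite \cite{Meyer-Nest:Bootstrap} for the claim that an \emph{arbitrary} \(A\) over a finite space lies in the localising subcategory of \(\KKcat(X)\) generated by skyscraper algebras. No such statement is proved there, and it cannot be: in \(\KKcat(X)\) only extensions admitting an \(X\)\nb-equivariant completely positive section give exact triangles, so the generation result (assembling \(A\) from the triangles of its canonical filtration) carries admissibility hypotheses. This is precisely where \(\KKcat(X)\) and \(\Ecat(X)\) part ways --- the present paper says so explicitly in the proof of Proposition~\ref{pro:Bootstrap_generators_finite} (``the admissibility assumptions \dots{} are only needed for \(\KK\), they become automatic in \(\E\)\nb-theory''), and Example~\ref{example:no_excision} illustrates the failure. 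To repair your argument when \(A(X)\) is nuclear you must verify admissibility: the plain Choi--Effros theorem gives completely positive sections but not \(X\)\nb-equivariant ones; equivariance is Kirchberg's noncommutative Michael selection principle \cite{Kirchberg:Michael}, and this must be invoked. A second slip: the generators are the skyscrapers over the \emph{fibres}, \(i_x\bigl(A(\{x\})\bigr)\) for the locally closed singletons \(\{x\}\), not over the prime quotients \(i_x\bigl(A(\cl{\{x\}})\bigr)\). Your version is false even for nuclear algebras over the Sierpi\'nski space \(X=\{o,c\}\) with \(o\) open: for the cone extension \(A\) given by \(A(\{o\})=\Cont_0\bigl((0,1)\bigr)\), \(A(X)=\Cont_0\bigl((0,1]\bigr)\), \(A(\{c\})=\C\), your generators are \(i_o\bigl(\Cont_0((0,1])\bigr)\cong 0\) and \(i_c(\C)\), yet every object of \(\langle i_c(\C)\rangle\) is killed by the triangulated, sum-preserving functor \(B\mapsto B(\{o\})\), while \(A(\{o\})\) has nonzero \(\K_1\). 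With the fibres the coefficients are still subquotients of \(A(X)\), hence nuclear, so your reduction to \(i_x(D)\in\mathcal C\) goes through once these two points are fixed; the resulting argument then proves the nuclear case directly, without needing to place \(A\) in the bootstrap class at all.
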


\begin{proof}
  If~\(A\) belongs to the bootstrap class of~\cite{Meyer-Nest:Bootstrap}, then we may compute \(\KK_*(X;A,B)\) by a spectral sequence whose first page only involves the \(\K\)\nb-theory groups of \(A(U)\) and \(B(U)\) for minimal open subsets~\(U\) in~\(X\).  The arguments in~\cite{Meyer-Nest:Bootstrap} only use the universal property of \(\KKcat(X)\) and work equally well for~\(\Ecat(X)\), with some simplifications because we do not have to worry about equivariant completely positive sections of various extensions.  Thus there is an analogous spectral sequence computing \(\E_*(X;A,B)\), and it has the same first page as the spectral sequence computing \(\KK_*(X;A,B)\).  The canonical map \(\KKcat(X)\to\Ecat(X)\) provides a morphism between these spectral sequences, which is an isomorphism on the first page and thus on all later pages.  Hence the two spectral sequences are isomorphic, so that \(\KK_*(X;A,B) \cong \E_*(X;A,B)\).
\end{proof}

\begin{example}
  \label{example:no_excision}
  We exhibit an extension of nuclear \(\Cst\)\nb-algebras over $[0,1]$ which is not excisive for $\KK([0,1];\blank,B)$.  Consider the extension of \(\Cst\)\nb-algebras over $[0,1]$
  \[
  0\to \Cont_0[0,1)\to \Cont[0,1] \xrightarrow{\pi} \C \to 0,
  \]
  where $\pi(f)=f(1)$.  We claim that the mapping cone~$C_\pi$ is not $\KK([0,1])$-equivalent to $\ker(\pi)=\Cont_0[0,1)$ and that
  \[
  \KK([0,1];S\C, \Cont_0[0,1))\neq \E([0,1];S\C, \Cont_0[0,1)).
  \]
  Here~$S\C$ is regarded as a $\Cont[0,1]$-algebra via the multiplication $f \cdot g=f(1)g$ for $f\in \Cont[0,1]$ and $g\in S\C$.  Let us address first the second part of the claim.  It is convenient to work with asymptotic morphisms parametrised by $t\in [0,1)$.  For each such~$t$ consider the map $\nu_t\colon [0,1]\to [0,1]$,
  \[
  \nu_t(s)=
  \begin{cases}
    0 &\text{if $0\le s <t$,}\\
    \frac{s-t}{1-t} & \text{if $t\le s \le 1$.}
  \end{cases}
  \]
  Define a continuous family of \Star{}homomorphisms $\varphi_t\colon S\C\to \Cont_0[0,1)$, $t\in [0,1)$ by $\varphi_t (\exp{(2\pi\ima s)}-1) \defeq \exp{(2\pi\ima \nu_t(s))}-1$.  It is easily verified that the asymptotic homomorphism~$(\varphi_t)$ is asymptotically $[0,1]$-equivariant since $\exp{(2\pi\ima \nu_t(s))}-1$ is suported on $[t,1)$.  Set $A=S\C$ and $B=\Cont_0[0,1)$.  We observe that the class of $(\varphi_t)$ in $\E([0,1];A, B)$ is non-zero since its image in $\Hom\bigl(\K_1(A(0,1)), \K_1(B(0,1))\bigr) \cong \Hom(\Z, \Z)$ is equal to $\id_\Z$.  On the other hand, $\KK_*([0,1];A, B)=\KK_*\bigl(S\C,\bigcap_n B((1-1/n,1])\bigr)=\KK_*(S\C,\{0\})=0$, by \cite{Meyer-Nest:Bootstrap}*{Proposition 3.13}.

  Let us verify now the first part of the claim.  The Puppe sequence for $\KK([0,1];\blank,B)$ associated to the map~$\pi$ yields $\KK([0,1],C_\pi,B)=0$ since $\KK_*([0,1],\Cont[0,1],B)=\KK_*(\C,B)=0$ and $\KK_*([0,1];\C, B)=0$ as argued above.  At the same time, $\KK_*([0,1];B, B)\neq 0$ since the natural map $\KK_*([0,1];B, B)\to \Hom(\K_1(B(0,1)), \K_1(B(0,1))\cong \Z$ sends $[\id_B]$ to~$1$.
 \end{example}

\section{A universal coefficient theorem for \texorpdfstring{$\Cst$}{C*}-algebras over totally disconnected spaces}
\label{sec:Cstar_Cantor}

In this section, we study \(\Cst\)\nb-algebras over a totally disconnected compact metrisable space~$X$.  Our goal is to construct a Universal Coefficient Theorem that computes \(\E_*(X;A,B)\) for \(A,B\in\BootstrapE(X)\).  For this purpose, we use filtrated \(\K\)\nb-theory \emph{with coefficients} and obtain a Universal Coefficient exact sequence that generalises the Multicoefficient Theorem of~\cite{Dadarlat-Loring:Multicoefficient}.  In order to explain the key role of filtrated \(\K\)\nb-theory with coefficients, we also revisit an example from~\cite{Dadarlat-Eilers:Bockstein} showing that the spectral sequence generated by filtrated \(\K\)\nb-theory does not degenerate to an exact sequence.

In this section, all \(\Cst\)\nb-algebras are assumed separable and all groups countable.

Let \(\Primep\subseteq\N\) be the set consisting of~\(0\) and all prime powers. The relevance of the set~\(\Primep\) in the Universal Multicoefficient Theorem is that the groups~\(\Z/p\) for \(p\in\Primep\) are exactly the indecomposable Abelian groups.

For $p\in\Primep$ let~$\III_p$ be the mapping cone of the unital \Star{}homomorphism $\C\to \Mat_p(\C)$.  For $p=0$, we let $\III_0\defeq\C$.  It is convenient to denote~$\III_p$ by~$\III_p^0$ and its suspension~$S\III_p$ by~$\III_p^1$.  Then for a \(\Cst\)\nb-algebra~$A$:
\[
\K_i(A;\Z/p)\defeq\KK_i(\III_p,A)\cong \KK(\III_p^i,A), \quad i=0,1.
\]

Let us set $\III\defeq \bigoplus_{p\in\Primep} \III_p$ and consider the ring $\KK_*(\III,\III)$ with multiplication given by the Kasparov product.  The non-unital subring
\[
\Lambda=\bigoplus_{p,q\in\Primep} \KK_*(\III_p,\III_q)
\]
of $\KK_*(\III,\III)$ is called the ring of \emph{B\"ockstein operations}.  It consists of matrices indexed by $\Primep\times\Primep$ with only finitely many non-zero entries $\lambda_{pq}\in \KK_*(\III_p,\III_q)$.  The Kasparov product
\[
\KK_*(\III_p,\III_q)\times \KK_*(\III_q,A) \to \KK_*(\III_p,A)
\]
induces a natural $\Lambda$\nb-module structure on the $\Z/2\times \Primep$-graded group
\[
\Kcoef(A)=\bigoplus_{p\in \Primep} \K_*(A;\Z/p).
\]
The $\KK$-class~$x_p^i$ of $\id_{\III_p^i}$ generates the group $\K_i(\III_p^i, \Z/p)\cong \KK(\III_p^i,\III_p^i)$.  We shall work with $\Z/2\times \Primep$-graded $\Lambda$\nb-modules $M=(M_p^i)$ such that for $\lambda\in \KK_j(\III_q,\III_k)$ and $m\in M_p^i$, $\lambda m\in M_q^{j+i}$ if $k=p$ and $\lambda m=0$ if $k\neq p$.  We also ask that~$x_p^i$ acts as the identity automorphism on~$M_p^i$.  In particular, this implies that $p M_p^i=0$.  These assumptions are modelled on the case $M=\Kcoef(A)$ where $M_p^i=\KK_*(\III_p^i,A)$.

\begin{definition}
  \label{def:Cantor_basic_modules}
  A $\Lambda$\nb-module isomorphic to $\Kcoef(\III_p^i)$ for some $(i,p)\in \Z/2\times \Primep$ is called \emph{basic}.
\end{definition}

\begin{lemma}
  \label{lemma:Cantor_Lambda_proj}
  For all $(i,p)\in \Z/2\times \Primep$, $\Kcoef(\III_p^i)=\Lambda \cdot x_p^i$.  The basic $\Lambda$\nb-modules are projective in the category of $\Z/2\times \Primep$-graded $\Lambda$\nb-modules.
\end{lemma}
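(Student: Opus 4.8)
The plan is to prove the two assertions in order, using the generation statement to power the projectivity argument. Throughout, let $M=(M_q^j)$ denote a $\Z/2\times\Primep$\nb-graded $\Lambda$\nb-module satisfying the standing conditions, and recall that the $(j,q)$\nb-component of $\Kcoef(\III_p^i)$ is $\K_j(\III_p^i;\Z/q)=\KK_j(\III_q,\III_p^i)$. For $\Kcoef(\III_p^i)=\Lambda\cdot x_p^i$, I would argue as follows. For $\lambda\in\KK_j(\III_q,\III_p)\subseteq\Lambda$ the action $\lambda\cdot x_p^i$ is the Kasparov product $\lambda\otimes_{\III_p}x_p^i\in\KK_{j+i}(\III_q,\III_p^i)$, whereas the components of~$\lambda$ lying in $\KK_*(\III_q,\III_s)$ with $s\neq p$ annihilate~$x_p^i$. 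Since $\III_p^i=S^i\III_p$ and $x_p^i$ is the class of $\id_{\III_p^i}$ under $\KK(\III_p^i,\III_p^i)\cong\KK_i(\III_p,\III_p^i)$, right multiplication by~$x_p^i$ is precisely the suspension isomorphism $\KK_j(\III_q,\III_p)\xrightarrow{\ \cong\ }\KK_{j+i}(\III_q,\III_p^i)$. Writing $e_p\defeq[\id_{\III_p}]\in\KK_0(\III_p,\III_p)$ for the idempotent of~$\Lambda$ that is the operator underlying~$x_p^i$ (so $e_p$ acts as the identity on $M_p^\ast$ and kills $M_q^\ast$ for $q\neq p$), summation over $q\in\Primep$ shows that $\mu\mapsto\mu\cdot x_p^i$ maps $\Lambda e_p$ \emph{bijectively} onto $\Kcoef(\III_p^i)$. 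In particular $x_p^i$ generates.

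For projectivity I would establish a Yoneda-type computation. For any~$M$ as above I claim that evaluation at the generator,
\[
\Hom_\Lambda\bigl(\Kcoef(\III_p^i),M\bigr)\to M_p^i,\qquad f\mapsto f(x_p^i),
\]
is a natural isomorphism. It lands in $M_p^i$ because $x_p^i\cdot x_p^i=x_p^i$ (idempotency of $\id_{\III_p^i}$) and $f$ preserves the grading, and it is injective because $x_p^i$ generates the module by the first part. For surjectivity, given $m\in M_p^i$ I would set $f(\lambda\cdot x_p^i)\defeq\lambda\cdot m$; the only delicate point is well-definedness, i.e.\ the implication $\lambda\cdot x_p^i=0\Rightarrow\lambda\cdot m=0$. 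Only the column $\lambda e_p$ of~$\lambda$ enters either product, and by the bijectivity from the first part the map $\mu\mapsto\mu\cdot x_p^i$ is injective on $\Lambda e_p$; hence $\lambda\cdot x_p^i=0$ forces $\lambda e_p=0$. Since $x_p^i=e_p$ acts as the identity on $M_p^i$ we have $m=e_p\cdot m$, whence $\lambda\cdot m=(\lambda e_p)\cdot m=0$. The resulting map $f$ is $\Lambda$\nb-linear and grading preserving, naturality in~$M$ is immediate, and $M\mapsto M_p^i$ is exact because exactness of graded modules is tested componentwise. Therefore $\Hom_\Lambda\bigl(\Kcoef(\III_p^i),-\bigr)$ is exact and $\Kcoef(\III_p^i)$ is projective; as every basic module is isomorphic to some $\Kcoef(\III_p^i)$, this finishes the lemma.

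The main obstacle I anticipate is the well-definedness step, that is, controlling the annihilator of the generator $x_p^i$: one must know that $\lambda\cdot x_p^i=0$ depends only on the column $\lambda e_p$ and is detected by injectivity on $\Lambda e_p$. This is exactly why I would prove $\Kcoef(\III_p^i)=\Lambda\cdot x_p^i$ first and phrase it as the \emph{bijectivity} of multiplication by~$x_p^i$ on $\Lambda e_p$, rather than mere surjectivity. The suspension isomorphism underlying this bijectivity is routine, but should be stated explicitly since it is what identifies~$x_p^i$ with the idempotent~$e_p$ up to the degree shift by~$i$, and the idempotency $x_p^i\cdot x_p^i=x_p^i$ is what makes the whole Yoneda argument go through.
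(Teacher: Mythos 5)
Your proof is correct and takes essentially the same approach as the paper: both parts rest on the suspension isomorphism identifying right multiplication by $x_p^i$ with $\KK_j(\III_q,\III_p)\cong\KK_{j+i}(\III_q,\III_p^i)$, and on the key annihilator observation that $\lambda\cdot x_p^i=0$ for $\lambda\in\KK_*(\III_q,\III_k)$ forces $k\neq p$ or $\lambda=0$. The paper merely verifies the lifting property of $\Lambda x_p^i$ directly, defining $\Phi(\lambda x_p^i)=\lambda b_p^i$ for a lift $b_p^i$ of $\varphi(x_p^i)$, which is your natural isomorphism $\Hom_\Lambda\bigl(\Kcoef(\III_p^i),M\bigr)\cong M_p^i$ together with exactness of $M\mapsto M_p^i$ in unpacked form.
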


\begin{proof}
  The first part follows because $\KK_*(\III_p,\III_p)\cong \KK_*(\III_p^i,\III_p^i)$ and $x_p^i=[\id_{\III_p^i}]$ is idempotent.  For the second part we observe that if $\lambda x_p^i=0$ for some $\lambda\in \KK_*(\III_q,\III_k)$ then either $k\neq p$ or $\lambda=0$.  This shows that if $\pi\colon B\to C$ is a surjective morphism of $\Lambda$\nb-modules, then any morphism $\varphi\colon \Lambda x_p^i \to C$ lifts to a morphism $\Phi\colon \Lambda x_p^i \to B$ defined by $\Phi(\lambda x_p^i)=\lambda b_p^i$, $\lambda\in\Lambda$, where $b_p^i$ is some lifting of $\varphi(x_p^i)$.
\end{proof}

We give a very concise proof of the following result from~\cite{Dadarlat-Loring:Multicoefficient}.

\begin{proposition}
  \label{prop:Cantor_Lambda_fin_gen_K}
  Let \(A\) and~\(B\) be separable \(\Cst\)\nb-algebras and suppose that~$A$ is in the bootstrap class~$\Bootstrap$ with $\K_*(A)$ finitely generated.  Then $\KK(A,B) \cong \Hom_\Lambda\bigl(\Kcoef(A),\Kcoef(B)\bigr)$.
\end{proposition}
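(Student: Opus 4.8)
The plan is to exhibit the natural comparison homomorphism and to show that it is invertible by reducing, via a $\KK$\nb-equivalence, to the elementary building blocks~$\III_p^i$.

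First I would construct the map $\Gamma\colon \KK(A,B)\to \Hom_\Lambda\bigl(\Kcoef(A),\Kcoef(B)\bigr)$ sending $\alpha\in\KK(A,B)$ to the homomorphism $\Kcoef(A)=\KK_*(\III,A)\to\KK_*(\III,B)=\Kcoef(B)$ given by Kasparov product with~$\alpha$ on the right. Since the $\Lambda$\nb-action on $\Kcoef$ and the product with~$\alpha$ are applied on opposite sides of the associative Kasparov product, $\Gamma(\alpha)$ is a $\Lambda$\nb-module homomorphism, and $\Gamma$ is natural in both variables and additive.

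Second I would check that $\Gamma$ is an isomorphism when $A=\III_p^i$ for $(i,p)\in\Z/2\times\Primep$. On the one hand $\KK(\III_p^i,B)=\K_i(B;\Z/p)$ is the graded piece $\Kcoef(B)_p^i$. On the other hand, Lemma~\ref{lemma:Cantor_Lambda_proj} gives $\Kcoef(\III_p^i)=\Lambda x_p^i$ with $x_p^i$ idempotent, so evaluation at the generator identifies $\Hom_\Lambda\bigl(\Lambda x_p^i,\Kcoef(B)\bigr)$ with $\Kcoef(B)_p^i$; here one reuses the observation from that lemma that $\lambda x_p^i=0$ forces $\lambda b=0$ for every $b\in\Kcoef(B)_p^i$, which makes the evaluation well defined and bijective. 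Under these identifications $\Gamma$ becomes the identity, because $x_p^i\cdot\alpha=\alpha$ for $\alpha\in\KK(\III_p^i,B)$. By additivity of both sides, $\Gamma$ is then an isomorphism for every finite direct sum $P=\bigoplus_k\III_{p_k}^{j_k}$ of building blocks.

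The decisive step is to replace a general~$A$ by such a~$P$. Since $\K_*(A)$ is finitely generated, the structure theorem for finitely generated Abelian groups decomposes $\K_0(A)\oplus\K_1(A)$ into finitely many cyclic summands, each of the form $\Z/p$ with $p\in\Primep$ (with the convention $\Z/0=\Z$) and sitting in some degree. As $\K_*(\III_p)=(0,\Z/p)$ and suspension shifts the degree, each summand is realised as $\K_*(\III_p^{j})$ for a suitable~$j$, so $P\defeq\bigoplus_k\III_{p_k}^{j_k}$ satisfies $\K_*(P)\cong\K_*(A)$. Both $A$ and~$P$ lie in the bootstrap class~$\Bootstrap$, so the ordinary Universal Coefficient Theorem provides a surjection $\KK(P,A)\twoheadrightarrow\Hom\bigl(\K_*(P),\K_*(A)\bigr)$; lifting the chosen isomorphism yields $\alpha\in\KK(P,A)$ inducing an isomorphism on $\K_*$, hence a $\KK$\nb-equivalence. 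Naturality of~$\Gamma$ then transports the isomorphism $\Gamma_P$ to~$\Gamma_A$, completing the proof. I expect the main obstacle to lie precisely in this last step: one must combine the surjectivity half of the classical Universal Coefficient Theorem with the fact that a $\KK$\nb-morphism between bootstrap algebras that is an isomorphism on $\K_*$ is automatically invertible, so that the purely algebraic matching of $\K_*(A)$ with $\bigoplus_k\K_*(\III_{p_k}^{j_k})$ genuinely arises from a single $\KK$\nb-equivalence. Once that equivalence is in hand, everything else is formal.
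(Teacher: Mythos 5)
Your proof is correct and follows essentially the same route as the paper: reduce to the case $A=\III_p^i$ using additivity in the first variable and the UCT (a $\KK$\nb-equivalence of~$A$ with a finite direct sum of building blocks), then identify $\Hom_\Lambda\bigl(\Lambda x_p^i,\Kcoef(B)\bigr)$ with $\KK(\III_p^i,B)$ by evaluation at the generator~$x_p^i$, using that the Kasparov product with $[\id_{\III_p^i}]$ acts as the identity. The paper compresses your final step into the phrase ``by the UCT we may assume that $A=\III_p^i$,'' so your explicit construction of the $\KK$\nb-equivalence $P\to A$ is simply a fleshed-out version of the same argument, not a different approach.
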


\begin{proof}
  Both sides are additive in the first variable.  Thus by the UCT we may assume that $A=\III_p^i$ for some $(i,p)\in \Z/2\times \Primep$.  Let us observe that any element $h\in \Hom_\Lambda\bigl(\Lambda x_p^i,\Kcoef(B)\bigr)$ is completely determined by $h(x_p^i)\in \K_i(B;\Z/p)\cong\KK(\III_p^i,B)$.  Moreover, the image of $h(x_p^i)$ under the map $\KK(\III_p^i,B) \to \Hom_\Lambda\bigl(\Kcoef(\III_p^i),\Kcoef(B)\bigr)$ is precisely~$h$.  Indeed, the Kasparov product $\KK(\III_p^i,\III_p^i)\times\KK(\III_p^i,B)\to \KK(\III_p^i,B)$ gives $[\id_{\III_p^i}]\times\alpha=\alpha$.
\end{proof}

If~$A$ is a separable \(\Cst\)\nb-algebra over a zero-dimensional space~$X$, then $\Kcoef(A)$ has a natural structure of module over the ring \(\Cont(X,\Lambda)\) of locally constant functions from~$X$ to~$\Lambda$.  This is easily seen by observing that $A\cong \bigoplus_{k=1}^n A(U_k)$ for any clopen partition $(U_k)_{k=1}^n$ of~$X$.  A \(\Cst\)\nb-algebra over~$X$ is called \emph{elementary} if it is isomorphic to \(\bigoplus_{k=1}^n \Cont(U_k,A_k)\), where $(U_k)_{k=1}^n$ is a clopen partition of~$X$, each~$A_k$ is a separable \(\Cst\)\nb-algebra in the bootstrap class, and $\K_*(A_k)$ is finitely generated.  If~$A$ is elementary, then the \(\Cont(X,\Lambda)\)-module \(\Kcoef(A)\) is isomorphic to \(\bigoplus_{k=1}^n \Cont(U_k,\Kcoef(A_k))\).  Since $\K_*(A_k)$ is finitely generated, it follows from the UCT that~$A_k$ is $\KK$-equivalent to a finite direct sum of~$\III_p^i$s, so that~$\Kcoef(A_k)$ is $\Lambda$\nb-projective by Lemma~\ref{lemma:Cantor_Lambda_proj}.  It follows easily that the \(\Cont(X,\Lambda)\)-module~\(\Kcoef(A_k)\) is projective.

\begin{lemma}
  \label{lemma:from_Pext_to_lim_one_Cantor}
  Suppose that~$M$ is isomorphic to the inductive limit of an inductive system~$(M_j)$ of projective \(\Cont(X,\Lambda)\)-modules.  Then for any \(\Cont(X,\Lambda)\)-module~\(N\) there is a natural isomorphism
  \[
  \varprojlim\nolimits^1 \Hom_{\Cont(X,\Lambda)}(M_j,N) \cong \Ext_{\Cont(X,\Lambda)}(M,N).
  \]
\end{lemma}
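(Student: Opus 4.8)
The plan is to produce a projective resolution of~$M$ of length one directly from the inductive system, and then to compute $\Ext$ by dualising it. The key observation is that an inductive limit of projective $\Cont(X,\Lambda)$-modules automatically has projective dimension at most one, via the mapping telescope.

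First I would set up the telescope exact sequence. Writing $\iota_j\colon M_j\to M_{j+1}$ for the structure maps of the inductive system, let $S\colon\bigoplus_j M_j\to\bigoplus_j M_j$ be the map sending the summand~$M_j$ into the summand~$M_{j+1}$ via~$\iota_j$. Then $\id-S$ is injective with cokernel $\varinjlim M_j\cong M$, giving a short exact sequence
\[
0\to\bigoplus_j M_j\xrightarrow{\id-S}\bigoplus_j M_j\to M\to 0
\]
of $\Cont(X,\Lambda)$-modules. Since each~$M_j$ is projective and arbitrary direct sums of projectives are projective in a module category, both outer nonzero terms are projective. Thus this is a projective resolution of~$M$ of length one, so $\Ext_{\Cont(X,\Lambda)}(M,N)$ is computed as the cokernel of the map induced by $\id-S$ on $\Hom_{\Cont(X,\Lambda)}(\blank,N)$ (and higher $\Ext$ vanish).

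Next I would dualise. Applying $\Hom_{\Cont(X,\Lambda)}(\blank,N)$ and using the natural identification $\Hom_{\Cont(X,\Lambda)}\bigl(\bigoplus_j M_j,N\bigr)\cong\prod_j\Hom_{\Cont(X,\Lambda)}(M_j,N)$, the boundary map becomes $\id-S^*$. A direct check of components shows that $S^*$ is built from the transition maps $\iota_j^*\colon\Hom_{\Cont(X,\Lambda)}(M_{j+1},N)\to\Hom_{\Cont(X,\Lambda)}(M_j,N)$ of the inverse system $\bigl(\Hom_{\Cont(X,\Lambda)}(M_j,N)\bigr)_j$, so that $\id-S^*$ is precisely the map whose kernel and cokernel \emph{define} $\varprojlim$ and $\varprojlim\nolimits^1$ of that inverse system. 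Consequently the kernel recovers $\varprojlim\Hom_{\Cont(X,\Lambda)}(M_j,N)\cong\Hom_{\Cont(X,\Lambda)}(M,N)$, while the cokernel yields the asserted natural isomorphism $\Ext_{\Cont(X,\Lambda)}(M,N)\cong\varprojlim\nolimits^1\Hom_{\Cont(X,\Lambda)}(M_j,N)$. Naturality in~$N$ is immediate, as every map in the construction is natural in~$N$.

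Nearly everything here is formal once the telescope sequence is available; the only points deserving care are verifying that $\id-S$ is injective with cokernel~$M$ (this is the standard Milnor presentation of an inductive limit) and that we are genuinely in an abelian category with enough projectives in which direct sums of projectives remain projective. The latter is the one category-specific input, and it holds because we work throughout in the module category of $\Cont(X,\Lambda)$; this is what legitimises the length-one resolution and forces $\Ext$ to be concentrated in degree one.
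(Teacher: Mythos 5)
Your proof is correct and follows essentially the same route as the paper: both use the Milnor telescope presentation $0\to\bigoplus_j M_j\xrightarrow{\id-S}\bigoplus_j M_j\to M\to 0$ as a length-one projective resolution and then identify the cokernel of the dualised map $(\id-S)^*$ on $\prod_j\Hom_{\Cont(X,\Lambda)}(M_j,N)$ simultaneously with $\Ext_{\Cont(X,\Lambda)}(M,N)$ and with $\varprojlim\nolimits^1\Hom_{\Cont(X,\Lambda)}(M_j,N)$. Your extra remarks on the injectivity of $\id-S$ and on projectivity of direct sums of projectives are exactly the points the paper leaves implicit.
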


\begin{proof}
  Set $R=\Cont(X,\Lambda)$.  The extension
  \[
  0 \to \bigoplus_{j\in\N} M_j \xrightarrow{\Id-S}
  \bigoplus_{j\in\N} M_j \to M \to 0,
  \]
  where~$S$ is the natural shift map, is a projective resolution of~$M$.  Since $\bigoplus_{j\in\N} M_j$ is projective, we have an exact sequence
  \[
  \Hom_R\Bigl(\bigoplus_{j\in\N} M_j,N\Bigr) \xrightarrow{(\id-S)^*}
  \Hom_R\Bigl(\bigoplus_{j\in\N} M_j,N\Bigr) \to \Ext_{R}(M,N) \to 0,
  \]
  where the first map identifies with the first map of the exact sequence
  \[
  \prod_{j\in\N} \Hom_R(M_j,N) \to
  \prod_{j\in\N} \Hom_R(M_j,N) \to \varprojlim\nolimits^1 \Hom_R(M_j,N) \to 0
  \]
  that defines $\varprojlim\nolimits^1$.  Thus the two maps have isomorphic cokernels.
\end{proof}

\begin{proposition}
  \label{prop:Write_as_limit_Cantor}
  Let~$A$ be a separable nuclear continuous \(\Cst\)\nb-algebra over a totally disconnected compact metrisable space~\(X\).  Suppose that each fibre of~$A$ belongs to the bootstrap class~\(\Bootstrap\).  Then~$A$ is $\KK(X)$-equivalent to the inductive limit of an inductive system of elementary \(\Cont(X)\)-algebras.
\end{proposition}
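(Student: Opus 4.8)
The plan is to construct an explicit inductive system of elementary \(\Cont(X)\)-algebras \((B_j)\) with limit \(B\defeq\varinjlim B_j\), together with a class \(\beta\in\KK_0(X;B,A)\) inducing an isomorphism \(\Kcoef(B)\to\Kcoef(A)\) of \(\Cont(X,\Lambda)\)-modules, and then to argue that \(\beta\) is a \(\KK(X)\)-equivalence. First I would record that \(A\in\BootstrapE(X)\): since \(X\) is totally disconnected it is zero-dimensional, so the cited result \cite{Dadarlat:Fiberwise_KK} applies to every clopen, hence locally closed, restriction of the continuous nuclear field \(A\), giving \(A(U)\in\Bootstrap\) for all \(U\in\Open(X)\). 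The algebras \(B_j\) and their limit \(B\) are nuclear, \(B\in\BootstrapE(X)\) because \(\BootstrapE(X)\) is localising, and \(B\) is continuous: if the connecting maps are chosen injective and fibrewise injective, then for \(a\) in the dense union \(\bigcup_j B_j\) the function \(x\mapsto\norm a_x\) is a uniform limit of the continuous fibre-norm functions of the \(B_j\), hence continuous. Granting the construction, Theorem~\ref{the:bootstrap_complement} shows that \(\beta\) is an \(\E(X)\)-equivalence, because a \(\Kcoef\)-isomorphism induces isomorphisms \(\K_*(B(U))\to\K_*(A(U))\) on all clopen \(U\) and hence, by continuity of \(\K\)-theory, on all open \(U\). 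Finally Theorem~\ref{the:E_versus_KK_Hausdorff} identifies \(\KK(X)\) with \(\E(X)\) for first argument \(A\) and for first argument \(B\); so the \(\E(X)\)-inverse of \(\beta\) lifts to \(\KK(X)\), and the two triangle identities, detected injectively in \(\E(X;A,A)\) and \(\E(X;B,B)\), already hold in \(\KK(X)\). Thus \(\beta\) is a \(\KK(X)\)-equivalence.

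Second, I would build the module side. The invariant \(\Kcoef(A)\) is a countable \(\Cont(X,\Lambda)\)-module, and I would exhibit it as an inductive limit \(\varinjlim M_j\) of projective \(\Cont(X,\Lambda)\)-modules, each of the form \(M_j=\Kcoef(B_j)\) for an elementary \(B_j\). Over a refining sequence of finite clopen partitions generating the topology of \(X\), a finitely generated piece of \(\Kcoef(A)\) that is locally constant at a given level decomposes, over the clopen pieces \(U_k\) of that partition, as a finite direct sum of finitely generated \(\Lambda\)-modules; approximating each summand by a finite direct sum of the projective basic modules \(\Kcoef(\III_p^i)\) (Lemma~\ref{lemma:Cantor_Lambda_proj}) and inducing up along \(\Cont(U_k,\blank)\) produces exactly the \(\Kcoef\) of an elementary algebra. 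The connecting maps \(M_j\to M_{j+1}\) refine the partition and enlarge the finitely generated approximation simultaneously, and one arranges them to be injective with union all of \(\Kcoef(A)\).

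Third, I would realize this algebraic system by \(C^*\)-algebras. For each elementary \(B_j=\bigoplus_k\Cont(U_k,A_{jk})\), additivity in the first variable together with the induction–restriction adjunction for the clopen sets \(U_k\) (as in Lemma~\ref{lem:E_compact_Hausdorff} and \eqref{eq:KK_ix}) and Proposition~\ref{prop:Cantor_Lambda_fin_gen_K} (valid since each \(A_{jk}\) is bootstrap with finitely generated \(\K\)-theory) yield
\[
\KK_0(X;B_j,A)\cong\bigoplus_k\KK(A_{jk},A(U_k))\cong\bigoplus_k\Hom_\Lambda\bigl(\Kcoef(A_{jk}),\Kcoef(A(U_k))\bigr)\cong\Hom_{\Cont(X,\Lambda)}\bigl(\Kcoef(B_j),\Kcoef(A)\bigr).
\]
I would use this to lift the compatible family of module maps \(M_j=\Kcoef(B_j)\to\Kcoef(A)\) to a compatible family \(\beta_j\in\KK_0(X;B_j,A)\), and realize the connecting module maps \(M_j\to M_{j+1}\) by \(X\)-equivariant \Star{}homomorphisms \(B_j\to B_{j+1}\) using projectivity of \(\Kcoef(B_j)\). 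The family \((\beta_j)\) defines an element of \(\varprojlim_j\KK_0(X;B_j,A)\), and the Milnor sequence of Proposition~\ref{prop:Milnor_regular} provides a lift \(\beta\in\KK_0(X;B,A)\) restricting to the \(\beta_j\) and inducing the prescribed isomorphism on \(\Kcoef\).

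The main obstacle is the second step: writing \(\Kcoef(A)\) as an inductive limit of elementary \(\Kcoef\)-modules with connecting maps realizable geometrically. The delicate point is coherence — choosing the finitely generated projective approximations over successively finer clopen partitions so that they assemble into a genuine directed system of \(\Cont(X,\Lambda)\)-modules whose colimit is exactly \(\Kcoef(A)\), and whose connecting maps are realized by \Star{}homomorphisms between the elementary algebras. This is where total disconnectedness is essential, since it makes the module, at each partition level, a finite sum of \(\Lambda\)-modules over the clopen pieces, and where the projectivity from Lemma~\ref{lemma:Cantor_Lambda_proj} is used to lift maps; once the system is in place, the passage to \(C^*\)-algebras and to \(\KK(X)\) is comparatively formal.
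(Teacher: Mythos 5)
Your route --- present \(\Kcoef(A)\) as a colimit of elementary projective \(\Cont(X,\Lambda)\)-modules, realize that system by \(\Cst\)\nb-algebras, and upgrade the resulting \(\Kcoef\)-isomorphism to a \(\KK(X)\)-equivalence via Theorems \ref{the:bootstrap_complement} and~\ref{the:E_versus_KK_Hausdorff} --- is genuinely different from the paper's, but its central step fails as stated. The step you yourself flag as ``the main obstacle'', namely writing \(\Kcoef(A)\) as an inductive limit of modules \(\Kcoef(B_j)\) with \(B_j\) elementary, with connecting maps ``arranged to be injective with union all of \(\Kcoef(A)\)'', is false already when \(X\) is a point. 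Take \(A\) to be the stabilised UHF algebra of type \(2^\infty\), so that \(\K_0(A)=\Z[1/2]\) and \(\K_1(A)=0\). Then \(\K_*(A)\) is torsion-free and \(\K_*(A;\Z/2^k)=0\) for all \(k\), whereas every nonzero basic module \(\Kcoef(\III_p^i)\) has either torsion \(\K_*\) (when \(p\neq 0\)) or nonzero \(\Z/2\)-coefficient group (when \(p=0\)); since module homomorphisms preserve the \(\Z/2\times\Primep\)-grading, no nonzero direct sum of basic modules embeds into \(\Kcoef(A)\). Hence \(\Kcoef(A)\) is not an increasing union of elementary submodules: any presentation as a colimit of projectives must use non-injective connecting maps, and constructing such a coherent directed system from the abstract module is exactly what you have not done. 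Note also that this module statement is itself a consequence of the proposition (apply \(\Kcoef\) to the inductive limit and use continuity), so the natural proof of it runs through the geometry --- which is why the paper argues entirely on the algebra side: by \cite{Dadarlat:Fiberwise_KK}*{Theorem~2.5} one may assume \(A\) is unital and continuous with Kirchberg fibres; \cite{Dadarlat-Pasnicu:Continuous_fields}*{Theorem~3.6} then exhausts \(A\) by elementary unital \(\Cont(X)\)-subalgebras; and weak semiprojectivity of the fibres converts the exhaustion into an honest inductive limit after passing to a subsequence.

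Second, even granting the module presentation, ``realize the connecting module maps \(M_j\to M_{j+1}\) by \(X\)\nb-equivariant \Star{}homomorphisms \(B_j\to B_{j+1}\) using projectivity of \(\Kcoef(B_j)\)'' conflates two different liftings. Projectivity together with Lemma~\ref{lemma:UMCT_Lambda_finite_Cantor} produces a class in \(\KK_0(X;B_j,B_{j+1})\), not a \Star{}homomorphism; already \(\KK(\C,\C)=\Z\) shows that \(\KK\)-classes are not realized by \Star{}homomorphisms in general, and you need genuine algebra maps because the statement requires an honest \(\Cst\)\nb-inductive limit. Repairing this forces you to choose the building blocks to be Kirchberg algebras and invoke Kirchberg's classification theorem (existence \emph{and} uniqueness, to make the diagrams commute up to homotopy), plus a fullness device to keep the connecting maps fibrewise injective so that \(B\) is a continuous field --- precisely the machinery the paper deploys in Proposition~\ref{prop:E_X-trick}, and which in the present proposition is replaced by semiprojectivity. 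Two smaller points: the Milnor sequence you invoke, Proposition~\ref{prop:Milnor_regular}, is stated for \(\E(X)\), so the lift of \((\beta_j)\) to \(\KK_0(X;B,A)\) needs the nuclear/\(\KK(X)\) version of that sequence; and your final transfer from \(\E(X)\) to \(\KK(X)\) requires \(B\) to be nuclear and continuous, which is part of what remains to be constructed. Your concluding invertibility argument is sound in itself, but the objects to which it is to be applied have not been built.
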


\begin{proof}
  \cite{Dadarlat:Fiberwise_KK}*{Theorem~2.5} shows that~$A$ is $\KK(X)$-equivalent to a unital continuous \(\Cont(X)\)-algebra~$A^\sharp$ whose fibres are Kirchberg algebras.  Thus we may assume that $A=A^\sharp$.  By \cite{Dadarlat-Pasnicu:Continuous_fields}*{Theorem~3.6}, there is a sequence $(A_n)_{n=1}^\infty$ of elementary unital \(\Cont(X)\)-subalgebras of~$A$ which is exhausting~$A$ in the sense that for every finite subset~$F$ of~$A$, $\lim_{n\to\infty} \operatorname{dist}(F,A_n)=0$.  Since~$A_n$ is locally trivial and its fibres are weakly semiprojective (\cite{Dadarlat:cont_fields_fd_spaces}*{Section~3}) each inclusion map $\gamma_n\colon A_n\hookrightarrow A$ can be perturbed to some $\Cont(X)$-linear unital \Star{}monomorphism $\gamma_{n,n+k}\colon A_n \to A_{n+k}$ with $\norm{\gamma_n(a)-\gamma_{n,n+k}(a)}<1/2^n$ for~$a$ in a prescribed finite subset of~$A_n$.  It follows that after passing to a subsequence of~$(A_n)$ we can represent~$A$ as the inductive limit of a system $(A_{n_k},\gamma_{n_k,n_{k+1}})$ of elementary $\Cont(X)$-algebras.
\end{proof}

\begin{lemma}
  \label{lemma:UMCT_Lambda_finite_Cantor}
  Let \(A\) and~\(B\) be separable \(\Cont(X)\)-algebras over a totally disconnected compact metrisable space~\(X\) and suppose that~$A$ is elementary.  Then $\KK(X;A,B) \cong \Hom_{\Cont(X,\Lambda)}\bigl(\Kcoef(A),\Kcoef(B)\bigr)$.
\end{lemma}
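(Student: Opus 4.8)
The plan is to show that the canonical natural transformation
\[
\KK(X;A,B)\to \Hom_{\Cont(X,\Lambda)}\bigl(\Kcoef(A),\Kcoef(B)\bigr),
\qquad \alpha\mapsto \Kcoef(\alpha),
\]
is bijective when \(A\) is elementary; here \(\Kcoef(\alpha)\) is automatically \(\Cont(X,\Lambda)\)\nb-linear because the \(\Cont(X,\Lambda)\)\nb-action on \(\Kcoef\) is implemented by Kasparov products with classes that are natural in the \(\Cst\)\nb-algebra over~\(X\). Both sides turn finite direct sums in the first variable into direct sums, so writing \(A=\bigoplus_{k=1}^n \Cont(U_k,A_k)\) as in the definition of an elementary algebra reduces us to the case \(A=\Cont(U,A_0)\) for a single clopen subset \(U\subseteq X\) and a single separable \(\Cst\)\nb-algebra~\(A_0\) in the bootstrap class with \(\K_*(A_0)\) finitely generated.

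Next I would remove the clopen set~\(U\). Since \(U\) is clopen, \(X=U\sqcup(X\setminus U)\) and every \(\Cst\)\nb-algebra over~\(X\) splits as \(D=D(U)\oplus D(X\setminus U)\) into ideals that are \(\Cst\)\nb-algebras over~\(U\) and over~\(X\setminus U\); correspondingly \(\Cont(X,\Lambda)\cong\Cont(U,\Lambda)\times\Cont(X\setminus U,\Lambda)\), so that \(\Cont(X,\Lambda)\)\nb-modules split as well, and
\[
\KK(X;A,B)\cong\KK\bigl(U;A(U),B(U)\bigr)\oplus\KK\bigl(X\setminus U;A(X\setminus U),B(X\setminus U)\bigr),
\]
with the analogous decomposition of the \(\Hom\)\nb-group. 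As \(A=\Cont(U,A_0)\) is concentrated on~\(U\), its restriction to \(X\setminus U\) vanishes, the \((X\setminus U)\)\nb-summand contributes nothing to either side, and everything is computed over the compact totally disconnected space~\(U\) with the algebra \(\Cont(U,A_0)\) and the coefficient \(B(U)\). Relabelling \(U\) as~\(X\), it remains to treat \(A=\Cont(X,A_0)\) with \(X\) compact.

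In this situation I would evaluate the two sides and match them through the canonical map. On the left, the standard adjunction isomorphism \(\KK_*\bigl(X;\Cont(X,A_0),B\bigr)\cong\KK_*\bigl(A_0,B(X)\bigr)\) for compact~\(X\) (the \(\KK\)\nb-analogue of Lemma~\ref{lem:E_compact_Hausdorff}), together with \(B(X)=B\), identifies it with \(\KK(A_0,B)\), and Proposition~\ref{prop:Cantor_Lambda_fin_gen_K} gives \(\KK(A_0,B)\cong\Hom_\Lambda\bigl(\Kcoef(A_0),\Kcoef(B)\bigr)\) via \(\beta\mapsto\Kcoef(\beta)\). On the right I would use \(\Kcoef(\Cont(X,A_0))\cong\Cont(X,\Kcoef(A_0))\) and the fact that \(\Kcoef(A_0)\) is finitely generated projective over~\(\Lambda\) (by Lemma~\ref{lemma:Cantor_Lambda_proj}, since \(A_0\) is \(\KK\)\nb-equivalent to a finite direct sum of the~\(\III_p^i\)) to write \(\Cont(X,\Kcoef(A_0))\cong\Cont(X,\Lambda)\otimes_\Lambda\Kcoef(A_0)\); the tensor--hom adjunction for the inclusion \(\Lambda\hookrightarrow\Cont(X,\Lambda)\) as constant functions then yields
\[
\Hom_{\Cont(X,\Lambda)}\bigl(\Cont(X,\Lambda)\otimes_\Lambda\Kcoef(A_0),\Kcoef(B)\bigr)\cong \Hom_\Lambda\bigl(\Kcoef(A_0),\Kcoef(B)\bigr).
\]
Both sides are thus identified with \(\Hom_\Lambda(\Kcoef(A_0),\Kcoef(B))\). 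I expect the one genuinely non-formal point to be checking that the canonical comparison map is carried, under these two adjunctions, precisely to the isomorphism of Proposition~\ref{prop:Cantor_Lambda_fin_gen_K}: concretely, that the constant--function embedding \(A_0\to\Cont(X,A_0)\) implementing the \(\KK\)\nb-adjunction induces under \(\Kcoef\) the unit \(\Kcoef(A_0)\to\Cont(X,\Lambda)\otimes_\Lambda\Kcoef(A_0)\), \(m\mapsto 1\otimes m\), of the tensor--hom adjunction. This is a diagram chase presenting no real difficulty once the projectivity of \(\Kcoef(A_0)\) is in hand.
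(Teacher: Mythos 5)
Your proposal is correct and takes essentially the same approach as the paper's proof: reduction over the clopen partition to the case \(A=\Cont(X,D)\), followed by the commutative square built from the compact-base adjunction \(\KK(X;\Cont(X,D),B)\cong\KK(D,B)\), Proposition~\ref{prop:Cantor_Lambda_fin_gen_K}, and the identification \(\Kcoef\bigl(\Cont(X,D)\bigr)\cong\Cont(X,\Lambda)\otimes_\Lambda\Kcoef(D)\) combined with the tensor--hom adjunction. The only difference is cosmetic: the paper obtains this last identification via the chain \(\Cont(X,\Kcoef(D))\cong\Cont(X,\Z)\otimes\Lambda\otimes_\Lambda\Kcoef(D)\cong\Cont(X,\Lambda)\otimes_\Lambda\Kcoef(D)\) rather than by invoking projectivity of \(\Kcoef(D)\).
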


\begin{proof}
  Write $A=\bigoplus_{i=1}^k \Cont(U_i, D_i)$ where $U_1,\dotsc,U_k$ is a clopen partition of~$X$ and each~$D_i$ is in the bootstrap class with $\K_*(D_i)$ finitely generated.  We have $\KK(X;A,B)\cong \bigoplus_{i=1}^k \KK(U_i;A(U_i),B(U_i))$ and
  \[
  \Hom_{\Cont(X,\Lambda)}\bigl(\Kcoef(A),\Kcoef(B)\bigr)\cong \bigoplus_{i=1}^k \Hom_{\Cont(U_i,\Lambda)}\bigl(\Kcoef(A(U_i)),\Kcoef(B(U_i))\bigr).
  \]
  Thus we may assume that $A=\Cont(X,D)$.  In this case, the assertion follows from the commutative diagram
  \[
  \xymatrix{
    \KK(X;\Cont(X,D), B)\ar[r]\ar[d]_\cong&\Hom_{\Cont(X,\Lambda)}\bigl(\Kcoef(\Cont(X,D)),\Kcoef(B)\bigr)\ar[d]^\cong\\
    \KK(D,B)\ar[r] ^\cong&\Hom_\Lambda(\Kcoef(D),\Kcoef(B))}
  \]
  The bottom horizontal map of the diagram is bijective by Proposition~\ref{prop:Cantor_Lambda_fin_gen_K}, the left vertical may by Lemma~\ref{lem:E_compact_Hausdorff}.  The right vertical map is bijective because
  \begin{multline*}
    \Kcoef\bigl(\Cont(X,D)\bigr)
    \cong \Cont\bigl(X,\Kcoef(D)\bigr)
    \cong \Cont(X,\Z) \otimes \Kcoef(D)
    \\\cong \Cont(X,\Z) \otimes \Lambda \otimes_\Lambda \Kcoef(D)
    \cong \Cont(X,\Lambda) \otimes_\Lambda \Kcoef(D)
  \end{multline*}
  and
  \[
  \Hom_{\Cont(X,\Lambda)}\bigl(\Cont(X,\Lambda)\otimes_\Lambda \Kcoef(D), \Kcoef(B)\bigr)
  \cong \Hom_\Lambda\bigl(\Kcoef(D), \Kcoef(B)\bigr).\qedhere
  \]
\end{proof}

\begin{lemma}
  \label{lemma:C(X)alg_equivto_field}
  Any separable \(\Cont(X)\)-algebra over a totally disconnected compact metrisable space~\(X\) is isomorphic to the inductive limit of a sequence of locally trivial separable \(\Cont(X)\)-algebras.
\end{lemma}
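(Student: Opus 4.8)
The plan is to realise $A$ as an inductive limit by approximating the base space by finite discrete spaces and, on each piece of a clopen partition, replacing $A$ by its (constant) restriction algebra. Since $X$ is totally disconnected, compact and metrisable, it is an inverse limit $X\cong\varprojlim_n X_n$ of finite discrete spaces with surjective bonding maps $X_{n+1}\to X_n$; equivalently, there is an increasing sequence of finite clopen partitions $\mathcal P_n$ of~$X$ separating points, where $\mathcal P_n$ consists of the fibres $U_a^{(n)}\defeq q_n^{-1}(a)$, $a\in X_n$, of the projection $q_n\colon X\to X_n$. For each~$n$ I would set
\[
A_n \defeq \bigoplus_{a\in X_n} \Cont\bigl(U_a^{(n)},A(U_a^{(n)})\bigr).
\]
This is a separable \(\Cont(X)\)-algebra which is locally trivial for the partition $\mathcal P_n$, its fibre over $U_a^{(n)}$ being the constant algebra $A(U_a^{(n)})$.

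Next I would construct the structure maps. Because $\mathcal P_{n+1}$ refines $\mathcal P_n$, each $U_a^{(n)}$ is partitioned by the sets $U_b^{(n+1)}\subseteq U_a^{(n)}$, and the restriction maps $A(U_a^{(n)})\prto A(U_b^{(n+1)})$ induce a \(\Cont(X)\)-homomorphism $\beta_n\colon A_n\to A_{n+1}$ sending a section~$g$ over $U_a^{(n)}$ to the family of its restrictions $y\mapsto [g(y)]_{A(U_b^{(n+1)})}$ over the pieces $U_b^{(n+1)}$. There are also canonical \(\Cont(X)\)-homomorphisms $\epsilon_n\colon A_n\to A$, given on the summand over $U_a^{(n)}$ by the diagonal evaluation $g\mapsto\bigl(y\mapsto g(y)(y)\bigr)$, which lands in the ideal $A(U_a^{(n)})\subseteq A$. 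A direct check shows these are contractive \Star{}homomorphisms with $\epsilon_{n+1}\circ\beta_n=\epsilon_n$, so they assemble to a \(\Cont(X)\)-homomorphism $\epsilon_\infty\colon\varinjlim_n(A_n,\beta_n)\to A$, and it remains to prove that $\epsilon_\infty$ is an isomorphism.

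Surjectivity is immediate: restricting to constant sections identifies each ideal $A(U_a^{(n)})$ inside $A_n$, and since $\{U_a^{(n)}\}$ is a clopen partition we have $A=\bigoplus_a A(U_a^{(n)})$, so already each $\epsilon_n$ is onto and hence so is $\epsilon_\infty$. The main obstacle is injectivity, equivalently that $\epsilon_\infty$ is isometric. The subtlety is that $\epsilon_n$ is genuinely contractive --~it discards the off-diagonal values of a section~$g$ over $U_a^{(n)}$~-- so one must show that the norm discarded by the connecting maps tends to zero along the system. Concretely, for a section~$g$ over $U_a^{(n)}$ the norm of its image at level~$m$ is the supremum of $\norm{g(x)(y)}_{A(y)}$ taken over pairs $x,y$ lying in a common piece of $\mathcal P_m$, whereas $\norm{\epsilon_n g}$ is the supremum over the diagonal $x=y$. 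Since $\mathcal P_m$ separates points and each $g\colon U_a^{(n)}\to A(U_a^{(n)})$ is uniformly continuous on the compact set $U_a^{(n)}$, these off-diagonal suprema converge to the diagonal one as $m\to\infty$; here I also use the standard description of the norm in a \(\Cont(X)\)-algebra as the supremum of its fibrewise norms. Thus $\lim_m \norm{\beta_{n,m}(g)}=\norm{\epsilon_n g}$, so $\epsilon_\infty$ is isometric and therefore an isomorphism, exhibiting $A\cong\varinjlim_n A_n$ as an inductive limit of locally trivial separable \(\Cont(X)\)-algebras.
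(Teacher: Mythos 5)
Your proof is correct and takes essentially the same route as the paper: the same locally trivial building blocks \(\bigoplus_{U} \Cont\bigl(U,A(U)\bigr)\) over a refining sequence of finite clopen partitions, the same connecting and evaluation maps, surjectivity from the partition decomposition, and injectivity via a uniform-continuity-plus-compactness estimate on sections. The only cosmetic difference is that you show the limit map is isometric on all elements, whereas the paper checks that elements killed by the evaluation maps acquire arbitrarily small norm along the system (invoking upper semicontinuity of the fibrewise norms, which your triangle-inequality estimate against the diagonal supremum manages to avoid).
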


\begin{proof}
  Let~$A$ be a separable \(\Cont(X)\)-algebra over~$X$.  If~$\cov$ is a finite clopen cover of~$X$ we denote by~$A_\cov$ the locally trivial continuous \(\Cont(X)\)-algebra $\bigoplus_{U\in\cov} \Cont(U)\otimes A(U)$.  For each $x\in U$ the fibre $A_\cov(x)$ is $A(U)$.  There is a natural morphism of \(\Cont(X)\)-algebras $\alpha_\cov\colon A_\cov \to A$ which maps $(f_U \otimes a_U)_{U\in\cov}$ to $\sum_{U\in \cov} f_U a_U$.

  If~$V$ is a closed subset of~$U$ we have a natural restriction homomorphism $\Cont(U)\otimes A(U)\to \Cont(V)\otimes A(V)$, which maps $f\otimes a$ to $f|_V\otimes \pi_V(a)$.  Therefore, if~$\mathcal{V}$ is a finite clopen cover of~$X$ which refines~$\cov$, there is a natural morphism of \(\Cont(X)\)-algebras $\alpha_\cov^\mathcal{V}\colon A_\cov \to A_\mathcal{V}$ such that $\alpha_\mathcal{V}\circ \alpha_\cov^\mathcal{V}=\alpha_\cov$.

  Let $(\cov_n)_n$ be an infinite sequence of finite clopen covers of~$X$, with $\cov_{n+1}$ refining~$\cov_n$, and such that $\operatorname{diam}(\cov_n)\to 0$ with respect to some metric inducing the topology of~$X$.  Set $A_n=A_{\cov_n}$, $\alpha_n=\alpha_{\cov_n}$ and $\alpha_n^m=\alpha_{\cov_n}^{\cov_m}$.  We claim that the natural morphism $\varinjlim (A_n,\alpha_n^m) \to A$ is an isomorphism.  This morphism is surjective since each~$\alpha_n$ is surjective.  To prove its injectivity, it suffices to show that if $F\in A_n$ satisfies $\alpha_n(F)=0$, then for any $\varepsilon>0$ there is $m>n$ such that $\norm{\alpha_n^m(F)}\le\varepsilon$.  By localising at each element of~$\cov_n$, we may assume that $A_n=\Cont(X)\otimes A(X)$ and regard~$F$ as a continuous function $F\colon X\to A(X)$.  Since~$F$ is continuous, each $x\in X$ has a neighbourhood~$V_x$ such that $\norm{F(x)-F(y)}<\varepsilon/2$ for all $y\in V_x$.  Since $A(X)$ is a $\Cont(X)$-algebra, for each $a\in A(X)$, the map $x\mapsto \norm{\pi_x(a)}$ is upper semi-continuous.  The assumption $\alpha_n(F)=0$ implies that $\pi_x(F(x))=0$ for all $x\in X$.  Thus, after shrinking each~$V_x$ if necessary, we may arrange that $\norm{\pi_z(F(x))}<\varepsilon/2$ for all $z\in V_x$.  It follows that for any $y,z \in V_x$,
  \[
  \norm{\pi_z(F(y))}\le \norm{\pi_z(F(y)-F(x))} + \norm{\pi_z(F(x))}<\varepsilon.
  \]
  Extract now a finite cover $V_{x_1},\dotsc,V_{x_r}$ of~$X$.  Since $\operatorname{diam}(\cov_m)\to 0$ there is $m>n$ such that each element of~$\cov_m$ is contained in some~$V_{x_i}$.  It follows that $\norm{\alpha_n^m(F)}\le \varepsilon$.
\end{proof}

\begin{proposition}
  \label{prop:C(X)alg_equivto_field}
  Any separable \(\Cont(X)\)-algebra over a totally disconnected compact metrisable space~\(X\) is $\E(X)$-equivalent to a continuous separable \(\Cont(X)\)-algebra.
\end{proposition}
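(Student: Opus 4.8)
The plan is to present~$A$ as a sequential inductive limit of continuous $\Cont(X)$-algebras and then replace that limit by a mapping telescope, which is simultaneously separable, continuous, and $\E(X)$-equivalent to~$A$. The point is that a sequential inductive limit of continuous fields need not itself be continuous, whereas the telescope is assembled out of the continuous pieces $\Cont([n,n+1],A_n)$ in such a way that its fibrewise norm functions become \emph{suprema} of continuous functions, which forces continuity.

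First I would apply Lemma~\ref{lemma:C(X)alg_equivto_field} to write $A\cong\varinjlim(A_n,\varphi_n)$ with each~$A_n$ locally trivial. Over a totally disconnected~$X$, local triviality means $A_n\cong\bigoplus_{U\in\cov_n}\Cont(U)\otimes A(U)$ for a finite clopen cover~$\cov_n$, so each~$A_n$ is a continuous $\Cont(X)$-algebra. Next I would form the mapping telescope
\[
\mathcal T \defeq \Bigl\{(g_n)_{n\ge0}\in\textstyle\bigoplus_n \Cont([n,n+1],A_n) \Bigm| \varphi_n\bigl(g_n(n+1)\bigr)=g_{n+1}(n+1)\ \text{for all }n\Bigr\},
\]
the $c_0$-direct sum guaranteeing that~$\mathcal T$ is a separable $\Cont(X)$-algebra. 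Evaluation at the right endpoints exhibits~$\mathcal T$ as an extension of~$\bigoplus_n A_n$ by a direct sum of suspensions; as in \cite{Blackadar:K-theory}*{Section~21.3.2}, this is the standard model for the homotopy colimit of~$(A_n,\varphi_n)$ in~$\Ecat(X)$.

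To deduce the $\E(X)$-equivalence I would use that the inductive limit $A=\varinjlim A_n$ is itself a homotopy colimit of the same system in~$\Ecat(X)$, as recorded just before Proposition~\ref{prop:Milnor_regular}. Since a homotopy colimit of a fixed sequential diagram is unique up to isomorphism in a triangulated category, $\mathcal T$ and~$A$ are isomorphic in~$\Ecat(X)$, that is, $\E(X)$-equivalent. Alternatively, one may compare the two Milnor $\varprojlim^1$-sequences of Proposition~\ref{prop:Milnor_regular} for~$\E_*(X;\mathcal T,B)$ and~$\E_*(X;A,B)$, which share the terms~$\E_*(X;A_n,B)$, and conclude by Yoneda.

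The main obstacle, and the only genuinely new point, is the continuity of~$\mathcal T$. As a $\Cont(X)$-algebra, the fibre norm $x\mapsto\norm{\pi_x(a)}$ of any $a\in\mathcal T$ is automatically upper semicontinuous. For lower semicontinuity I would identify the fibre~$\mathcal T(x)$ with the telescope of the fibres $\bigl(A_n(x),\varphi_n(x)\bigr)$, so that for $a=(g_n)$ the fibre norm equals $\sup_{n,\,t\in[n,n+1]}\norm{\pi_x\bigl(g_n(t)\bigr)}$; each function $x\mapsto\norm{\pi_x(g_n(t))}$ is continuous because~$A_n$ is, and a supremum of continuous functions is lower semicontinuous. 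Being both upper and lower semicontinuous, the norm function is continuous, so~$\mathcal T$ is a continuous $\Cont(X)$-algebra. The step demanding care is the isometric identification of~$\mathcal T(x)$ with the fibrewise telescope: one inequality comes from the point-evaluation \Star{}homomorphisms $\mathcal T\to A_n$, $(g_m)_m\mapsto g_n(t)$, which descend to the fibres, and the reverse from the fact that the gluing relations and the $\Cont(X)$-action are compatible with evaluation at~$x$.
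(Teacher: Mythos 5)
Your construction coincides with the paper's own proof up to the decisive step: both take the inductive system $(A_n,\alpha_n^m)$ of locally trivial algebras from Lemma~\ref{lemma:C(X)alg_equivto_field}, form its mapping telescope, and check that the telescope is a separable continuous $\Cont(X)$-algebra. (Your continuity argument is fine, and in fact more careful than the paper's one-line justification; the isometric identification $\mathcal{T}(x)\cong T\bigl(A_n(x)\bigr)$ that you flag follows from exactness of the fibre functor applied to the endpoint-evaluation extension, plus the five lemma.) But your key claim --- that the telescope is ``the standard model for the homotopy colimit of $(A_n,\varphi_n)$ in $\Ecat(X)$'', hence isomorphic to~$A$ there --- is false, and this is a genuine gap. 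The $\Cst$-algebraic telescope is a model for the \emph{suspension} of the homotopy colimit. Test the constant system $A_n=\C$, $\varphi_n=\id$: imposing $g_0(0)=0$ (a condition your definition, like the paper's as literally written, should include --- without it the telescope is a cone and even contractible), one gets $\mathcal{T}\cong \Cont_0\bigl((0,\infty)\bigr)\cong S\C$, and $S\C\ncong\C$ in $\Ecat$ because $\K_1(S\C)=\Z\neq 0=\K_1(\C)$. The reason is visible in the extension you yourself write down: $\bigoplus_n SA_n\into\mathcal{T}\prto\bigoplus_n A_n$ realises $\mathcal{T}$ as the cone of the Connes--Higson boundary map $S\bigl(\bigoplus_n A_n\bigr)\to\bigoplus_n SA_n$, which under the canonical identification is $S(\id-{}$shift$)$; hence $\mathcal{T}\cong S\bigl(\operatorname{hocolim}A_n\bigr)\cong SA$, with an unavoidable suspension. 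Blackadar's telescope argument proves exactly this equivalence $\mathcal{T}\cong SA$ --- it is precisely why the Milnor sequence of Proposition~\ref{prop:Milnor_regular} carries the degree shift $\varprojlim\nolimits^1\E_1(X;A_n,B)$ --- so your alternative route (comparing the two $\varprojlim\nolimits^1$-sequences and applying Yoneda) also yields $\mathcal{T}\cong SA$ rather than $\mathcal{T}\cong A$.

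This is exactly the point the paper treats differently and correctly: it maps the telescope to the telescope of the constant system, $\alpha\colon T(A_n,\alpha_n^m)\to T(A,\id_A)\cong SA$, and compares the two extensions with contractible middle terms $\tilde{T}$ to conclude that $\alpha$ is an $\E(X)$-equivalence; the continuous algebra $T(A_n,\alpha_n^m)$ is therefore equivalent to $SA$, and the proposition follows from one application of Bott periodicity, since $S\,T(A_n,\alpha_n^m)$ is again a separable continuous $\Cont(X)$-algebra and $S\,T(A_n,\alpha_n^m)\cong S^2A\cong A$ in $\Ecat(X)$. Your proof is repaired the same way: keep $\mathcal{T}$, require $g_0(0)=0$ in its definition, keep your continuity argument, but conclude only $\mathcal{T}\cong SA$ in $\Ecat(X)$ and then replace $\mathcal{T}$ by $S\mathcal{T}$ in the final statement. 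As written, however, the assertion ``$\mathcal{T}$ and $A$ are isomorphic in $\Ecat(X)$'' is wrong, so the argument does not prove the proposition without this correction.
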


\begin{proof}
  For a given \(\Cont(X)\)-algebra~$A$, let $(A_n,\alpha_n^m)$ be the corresponding inductive system constructed as in the proof of the previous lemma.  Let
  \[
  T(A_n,\alpha_n^m)= \biggl\{ (f_n)\in \bigoplus_{n\in \N} \Cont([n,n+1],A_n) :
  f_{n+1}(n+1)=\alpha_n^{n+1}(f_n(n+1)) \biggr\}
  \]
  be the associated mapping telescope.  Since the mapping telescope construction is functorial, there is a natural \(\Cont(X)\)-linear \Star{}homomorphism
  \[
  \alpha\colon T(A_n,\alpha_m^n)\to T(A,\id_A)\cong SA.
  \]

  Arguing as in the paragraphs following the proof of \cite{Meyer-Nest:BC}*{Proposition 2.6}, it follows that~$\alpha$ is an $\E(X)$-equivalence.  Indeed, let $\tilde{T}(A_m,\alpha_m^n)$ be the variant of $T(A_m,\alpha_m^n)$ where we require $\lim_{t\to\infty} \alpha_m(f_m(t))$ to exist in~$A$ instead of $\lim f_m(t)=0$.  The algebra $\tilde{T}(A_m,\alpha_m^n)$ is contractible over~$X$ in a natural way.  There is a commutative diagram
  \[
  \xymatrix{
    0 \ar[r] &
    T(A_m,\alpha_m^n) \ar[r] \ar[d]_{\alpha} &
    \tilde{T}(A_m,\alpha_m^n) \ar[r] \ar[d] &
    A \ar[r] \ar@{=}[d] &
    0 \\
    0 \ar[r] &
    T(A,\id) \ar[r] &
    \tilde{T}(A,\id) \ar[r] &
    A \ar[r] &
    0,
  }
  \]
  whose rows are short exact sequences.  Since the algebras in the middle are contractible, it follows that~$\alpha$ induces an $\E(X)$-equivalence.  We conclude by observing that $T(A_m,\alpha_m^n)$ is a continuous \(\Cont(X)\)-algebra since it is a \(\Cont(X)\)-subalgebra of a direct sum of continuous \(\Cont(X)\)-algebras.
\end{proof}

\begin{proposition}
  \label{pro:bootstrap_fibrewise_td}
  A separable and nuclear \(\Cont(X)\)-algebra~$A$ over a totally disconnected compact metrisable space~\(X\) belongs to the bootstrap class~$\BootstrapE(X)$ if and only if all its fibres are in the bootstrap call $\BootstrapE$.
\end{proposition}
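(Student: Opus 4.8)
The plan is to treat the two implications separately. The forward implication is immediate: if $A\in\BootstrapE(X)$, then since $X$ is Hausdorff each point $x$ is closed, so the fibre $A(x)=A(\cl{\{x\}})$ is the value of $A$ on the (locally) closed set $\{x\}$. As recorded after Definition~\ref{def:Bootstrap_E}, the class $\BootstrapE(X)$ is preserved by the functors $A\mapsto A(Y)$ for locally closed $Y\subseteq X$, whence $A(x)\in\BootstrapE$ for every $x$.

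For the reverse implication, assume every fibre $A(x)$ lies in $\BootstrapE$. Since $A$ is nuclear, so is each quotient $A(x)$; and for nuclear $\Cst$\nb-algebras membership in $\BootstrapE$ is equivalent to membership in $\Bootstrap$, because $\E$ and $\KK$ agree on nuclear algebras (Theorem~\ref{the:KK_via_asm}). Thus $A(x)\in\Bootstrap$ for all $x$. The first step is to reduce to the continuous case. By Proposition~\ref{prop:C(X)alg_equivto_field}, $A$ is $\E(X)$\nb-equivalent to a continuous separable $\Cont(X)$\nb-algebra $\hat A$; inspecting the mapping\nb-telescope model in that proof, $\hat A$ is again nuclear, and its fibre at $x$ is the homotopy colimit of the ideals $A(U)$ as $U$ runs over the clopen neighbourhoods of $x$, which is $\E$\nb-equivalent to $\varinjlim_{U\ni x}A(U)=A(x)$. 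Hence $\hat A$ is continuous and nuclear with all fibres in $\BootstrapE$, that is, in $\Bootstrap$. Since $\BootstrapE(X)$ is a localising subcategory of $\Ecat(X)$, it is closed under $\E(X)$\nb-equivalence, so it suffices to show $\hat A\in\BootstrapE(X)$; we may therefore assume from now on that $A$ itself is continuous.

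Now Proposition~\ref{prop:Write_as_limit_Cantor} provides a $\KK(X)$\nb-equivalence, hence through the canonical functor $\KKcat(X)\to\Ecat(X)$ an $\E(X)$\nb-equivalence, between $A$ and an inductive limit $\varinjlim A_n$ of elementary $\Cont(X)$\nb-algebras. As $\BootstrapE(X)$ is localising and inductive limits in $\Cstarsep(X)$ are homotopy colimits in $\Ecat(X)$, it is closed under such limits, so it remains only to check that each elementary $A_n\in\BootstrapE(X)$. Writing $A_n=\bigoplus_k \Cont(U_k,A_k)$ with $(U_k)$ a finite clopen partition of $X$ and each $A_k$ nuclear and in the bootstrap class, we find for any open $V\subseteq X$ that $A_n(V)=\bigoplus_k \Cont_0(U_k\cap V)\otimes A_k$. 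Each $\Cont_0(U_k\cap V)$ is commutative and nuclear, hence in $\BootstrapE$, and $\BootstrapE$ is closed under tensoring by nuclear bootstrap algebras and under finite direct sums; therefore $A_n(V)\in\BootstrapE$ for all open $V$, that is, $A_n\in\BootstrapE(X)$. This completes the proof.

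The step I expect to be the main obstacle is the reduction to a continuous $\Cont(X)$\nb-algebra while keeping the fibres under control: one must verify that the continuous model furnished by Proposition~\ref{prop:C(X)alg_equivto_field} has fibres $\E$\nb-equivalent to those of $A$. This amounts to identifying the fibre over $x$ of the mapping telescope with the inductive limit of $A(U)$ along the downward directed clopen neighbourhood filter of $x$ and recognising that this limit, taken along the projection maps, is exactly $A(x)$. Once this is in place, the remaining ingredients --~stability of $\BootstrapE(X)$ under $\E(X)$\nb-equivalence and inductive limits, and the membership of the elementary algebras~-- are routine.
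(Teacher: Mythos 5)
Your argument is correct, but its decisive step is genuinely different from the paper's. Both proofs handle necessity the same way (the remark after Definition~\ref{def:Bootstrap_E} applied to the closed points $\{x\}$, which the paper leaves implicit) and both begin sufficiency by replacing $A$ with a continuous $\Cont(X)$\nb-algebra via Proposition~\ref{prop:C(X)alg_equivto_field}. From there the paper finishes in one stroke by citing the theorem of~\cite{Dadarlat:Fiberwise_KK} that a separable nuclear continuous field over a \emph{finite-dimensional} compact metrisable space lies in~$\Bootstrap$ once all its fibres do --- applied to all closed subsets, with a two-out-of-three argument supplying the open ones, as indicated in the remark following Proposition~\ref{pro:finite_bootstrapE} --- together with the identity of $\Bootstrap$ and $\BootstrapE$ on nuclear algebras. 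You instead run the continuous model through Proposition~\ref{prop:Write_as_limit_Cantor} to obtain a $\KK(X)$\nb- (hence $\E(X)$\nb-) equivalence with an inductive limit of elementary algebras, and conclude by closure properties alone: $\BootstrapE(X)$ is localising, $\Cst$\nb-inductive limits are homotopy colimits in $\Ecat(X)$, and an elementary algebra has $A_n(V)\in\BootstrapE$ for every open~$V$ by inspection. What each route buys: yours stays inside results proved in the paper and yields $A(U)\in\BootstrapE$ for all open~$U$ simultaneously, with no separate treatment of closed versus open subsets; the paper's is shorter and works verbatim over any finite-dimensional compact metrisable space, not just totally disconnected ones (and the external input is comparable, since Proposition~\ref{prop:Write_as_limit_Cantor} itself rests on~\cite{Dadarlat:Fiberwise_KK} and~\cite{Dadarlat-Pasnicu:Continuous_fields}).

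Three small repairs to your write-up. First, the telescope of Proposition~\ref{prop:C(X)alg_equivto_field} is $\E(X)$\nb-equivalent to $SA$, not to~$A$, so its fibre at~$x$ is $\E$\nb-equivalent to $SA(x)$ rather than $A(x)$; this is harmless because $\BootstrapE$ is suspension-invariant, and it can be avoided entirely by restricting the $\E(X)$\nb-equivalence to the closed point $\{x\}$ via Proposition~\ref{pro:restrict_to_locally_closed}, which directly gives an $\E$\nb-equivalence between the fibres. Second, the fact that $\KK$ and $\E$ agree on nuclear algebras is a classical result, not what Theorem~\ref{the:KK_via_asm} states (that theorem describes $\KK(X)$ via completely positive asymptotic morphisms). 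Third, nuclearity of the telescope, which Proposition~\ref{prop:Write_as_limit_Cantor} genuinely requires, should be justified: it follows because the telescope is an extension of a $c_0$\nb-direct sum of the~$A_n$ by a $c_0$\nb-direct sum of suspensions of the~$A_n$, and nuclearity is closed under extensions.
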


\begin{proof}
  By Propositions \ref{prop:C(X)alg_equivto_field} and~\ref{pro:E_Hausdorff_compare}, we may assume that~$A$ is a continuous \(\Cont(X)\)-algebra.  By a result of~\cite{Dadarlat:Fiberwise_KK} a separable nuclear continuous \(\Cont(X)\)-algebra over a finite-dimensional compact metrisable space~$X$ belongs to~$\Bootstrap$ if and only if all its fibres belong to~$\Bootstrap$.  This concludes the proof, since a nuclear \(\Cst\)\nb-algebra belongs to $\Bootstrap$ if and only if it belongs to $\BootstrapE$.
\end{proof}

\begin{proposition}
  \label{prop:E_X-trick}
  Let~$A$ be a separable \(\Cont(X)\)-algebra over a totally disconnected compact metrisable space~\(X\).  If \(A(U)\) is $\E$\nb-equivalent to a separable nuclear C*-algebra
  for each clopen set $U\subset X$, then~$A$ is $\E(X)$-equivalent to a separable, continuous, nuclear \(\Cont(X)\)-algebra.
\end{proposition}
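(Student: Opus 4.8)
The plan is to split the statement into two parts: first produce a \emph{nuclear} (but possibly discontinuous) separable \(\Cst\)\nb-algebra \(A'\) over~\(X\) with \(A'\sim A\) in~\(\Ecat(X)\), and then invoke Proposition~\ref{prop:C(X)alg_equivto_field} to gain continuity. The point of this split is that the construction underlying Proposition~\ref{prop:C(X)alg_equivto_field} \emph{preserves nuclearity}: for a separable \(\Cont(X)\)-algebra~\(A'\) it produces the mapping telescope \(T(A'_n)\) of the locally trivial algebras \(A'_n=\bigoplus_{U\in\cov_n}\Cont(U)\otimes A'(U)\) associated to a refining sequence of finite clopen covers~\(\cov_n\) of~\(X\); if~\(A'\) is nuclear then each ideal \(A'(U)\) for clopen~\(U\) is a direct summand of~\(A'\) and hence nuclear, so each \(A'_n\) is nuclear, and \(T(A'_n)\)—being built from the~\(A'_n\) by mapping cylinders (extensions of nuclear by nuclear) and an inductive limit—is nuclear as well. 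Since \(T(A'_n)\) is continuous and \(\E(X)\)\nb-equivalent to~\(A'\sim A\), it will be the required algebra. It therefore suffices to construct the nuclear model~\(A'\).

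For this I would start from Lemma~\ref{lemma:C(X)alg_equivto_field}, which writes \(A=\varinjlim(A_n,\alpha_n)\) as the inductive limit of the locally trivial algebras \(A_n=\bigoplus_{U\in\cov_n}\Cont(U)\otimes A(U)\) over finite clopen covers~\(\cov_n\); recall \cite{Meyer-Nest:BC}*{Section~2.4} that such a \(\Cst\)-algebraic inductive limit is a homotopy colimit in~\(\Ecat(X)\), so \(A\cong\operatorname{hocolim}_nA_n\). Each~\(U\in\cov_n\) is clopen, so by hypothesis there is a separable nuclear \(\Cst\)\nb-algebra~\(D_U\) and an \(\E\)\nb-equivalence \(\psi_U\in\E\bigl(A(U),D_U\bigr)\). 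The product \(\id_{\Cont(U)}\otimes\psi_U\) is \(\Cont(U)\)-linear, hence \(X\)\nb-equivariant, and so defines a morphism \(\Cont(U)\otimes A(U)\to\Cont(U)\otimes D_U\) in~\(\Ecat(X)\); summing over \(U\in\cov_n\) gives \(\phi_n\colon A_n\to B_n\) with \(B_n=\bigoplus_{U\in\cov_n}\Cont(U)\otimes D_U\) nuclear. Its restriction to any open \(V\subseteq X\) is the direct sum of the invertible maps \(\id_{\Cont_0(U\cap V)}\otimes\psi_U\); hence \(\phi_n\) is an \(\E(X)\)\nb-equivalence by Theorem~\ref{the:E-iso_finite}.

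Setting \(\beta_n\defeq\phi_{n+1}\circ[\alpha_n]\circ\phi_n^{-1}\in\E_0(X;B_n,B_{n+1})\), the systems \((A_n,\alpha_n)\) and \((B_n,\beta_n)\) are isomorphic in~\(\Ecat(X)\), so \(\operatorname{hocolim}_nB_n\cong\operatorname{hocolim}_nA_n\cong A\). To turn this into a genuine \(\Cst\)\nb-algebra I would represent each~\(\beta_n\) by an approximately \(X\)\nb-equivariant asymptotic morphism, form the shift \(\id-S\) on \(\bigoplus_nB_n\) (after the standard suspension and stabilisation), and let \(A'\) be its mapping cone, which computes the homotopy colimit. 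By Lemma~\ref{lemma:universality_E_X} this~\(A'\) is a genuine \(\Cst\)\nb-algebra over~\(X\), and it is nuclear: the mapping cone of an asymptotic morphism between nuclear \(\Cst\)-algebras is an extension of nuclear \(\Cst\)-algebras—via the extension generated by the asymptotic morphism as in the proof of Corollary~\ref{cor:E_versus_KK}—while \(\bigoplus_nB_n\) is nuclear. Since \(A'\sim\operatorname{hocolim}_nB_n\cong A\) up to a suspension (which is invertible in~\(\Ecat(X)\) and preserves nuclearity), \(A'\) is the desired nuclear model and the first paragraph finishes the proof.

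The main obstacle is exactly this last realisation step. Because the connecting maps~\(\beta_n\) are \(\E(X)\)\nb-classes rather than \Star{}homomorphisms, one cannot form a \(\Cst\)-algebraic inductive limit directly; instead one must realise them by asymptotic morphisms and verify that the resulting asymptotic mapping telescope is a genuine \(\Cst\)\nb-algebra over~\(X\) that both computes the homotopy colimit and inherits nuclearity from its building blocks. This is the technical heart of the argument and rests on the telescope machinery already used in Theorem~\ref{the:approximate_E} and Proposition~\ref{prop:C(X)alg_equivto_field}. I delegate continuity entirely to Proposition~\ref{prop:C(X)alg_equivto_field} precisely to avoid tracking it through this asymptotic telescope, where the intermediate algebras \(\Cont_\bo(T,\blank)\) are only \emph{quasi} \(\Cst\)-algebras over~\(X\) and need not be continuous.
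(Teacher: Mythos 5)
Your proposal is correct, but it takes a genuinely different route from the paper's proof. The paper rigidifies the $\E$\nb-theoretic data into honest \Star{}homomorphisms: after invoking \cite{Dadarlat:Fiberwise_KK}*{Lemma 2.2} to arrange $\Cuntza_\infty\otimes\Comp$\nb-absorption and full projections, it replaces each $A(U_k)$ by an $\E$\nb-equivalent \emph{stable Kirchberg algebra} $D_k$ and uses the existence and uniqueness parts of Kirchberg's classification theorem (\cite{Rordam-Stormer:Classification_Entropy}*{Theorem 8.3.3}) to lift the $\E$\nb-equivalences to \Star{}homomorphisms $\eta_k\colon D_k\to A(U_k)$ and to construct $\Cont(X)$-linear \Star{}monomorphisms $\beta_n^{n+1}\colon B_n\to B_{n+1}$ commuting up to unitary homotopy; shape theory and the $\varprojlim\nolimits^1$-sequence of Proposition~\ref{prop:Milnor_regular} then show that $A$ is $\E(X)$-equivalent to the honest inductive limit $B=\varinjlim B_n$, which is manifestly nuclear \emph{and} continuous in one stroke. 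You instead stay inside the triangulated category $\Ecat(X)$ and never lift the connecting $\E(X)$-classes to \Star{}homomorphisms: the homotopy colimit is realised as the cone of $\id-S$ through Lemma~\ref{lemma:universality_E_X}, nuclearity is tracked through the resulting extensions, and continuity is restored afterwards by the telescope of Proposition~\ref{prop:C(X)alg_equivto_field}, which you correctly observe preserves nuclearity. The trade-off: the paper's route yields an explicit, structured model and both properties simultaneously, at the price of deep classification machinery; your route is classification-free and closer to the paper's own abstract toolkit, but two steps deserve explicit justification. First, the exterior product $\id_{\Cont(U)}\otimes\psi_U$ of a non-equivariant $\E$\nb-class with $\id_{\Cont(U)}$ is nowhere constructed in the paper; it is available here, most cleanly because $D\mapsto\Cont(U,D)$ is a $\Cst$\nb-stable, half-exact (as $\Cont(U)$ is nuclear, hence exact) homotopy functor from $\Cstarsep$ to $\Ecat(X)$ and therefore factors through $\Ecat$ by the universal property of non-equivariant $\E$\nb-theory; this also makes $\Cont(U)\otimes\psi_U$ invertible for free, without appealing to Theorem~\ref{the:E-iso_finite}. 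Second, the nuclearity claims rest on permanence of nuclearity under extensions: the pullback algebra of Lemma~\ref{lemma:universality_E_X} is nuclear as an extension of $S\bigl(\bigoplus_n B_n\bigr)\otimes\Comp$ by $\Cont_0\bigl(T,S\bigl(\bigoplus_n B_n\bigr)\otimes\Comp\bigr)$ --- not as a subalgebra of the non-nuclear $\Cont_\bo(T,\blank)$ --- and the telescope is nuclear because its partial telescopes are iterated extensions of nuclear algebras with dense union. With these two points spelled out, your argument is complete.
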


\begin{proof} The proposition applies for instance when
  ~$A$ belongs to the bootstrap class~$\BootstrapE(X)$. 
  It was shown in \cite{Dadarlat:Fiberwise_KK}*{Lemma 2.2} that~$A$ is $\KK(X)$-equivalent to a $\Cont(X)$-algebra~$A'$ such that $A'\otimes\Cuntza_\infty\otimes\Comp\cong A'$ and that~$A'$ contains a full projection.  Thus we may assume that~$A$ itself has these properties.  Let $(A_n,\alpha_n^m)$ be the inductive system constructed in the proof of Lemma~\ref{lemma:C(X)alg_equivto_field}, that is, $A_n$ is of the form $\bigoplus_{k=1}^{r(n)} \Cont(U_k)\otimes A(U_k)$ with a partition into clopen sets~$U_k$.  It is clear that $A(U_k)\cong A(U_k)\otimes\Cuntza_\infty \otimes \Comp$ and that $A(U_k)$ contains a full projections.  By assumption, each \(\Cst\)\nb-algebra~$A(U_k)$ is $\E$\nb-equivalent to some nuclear separable \(\Cst\)\nb-algebra and hence it is $\E$\nb-equivalent to some stable Kirchberg algebra~$D_k$.  For each~$k$, Kirchberg's Classification Theorem \cite{Rordam-Stormer:Classification_Entropy}*{Theorem 8.3.3} yields a \Star{}homomorphism $\eta_k\colon D_k\to A(U_k)$ which lifts the given $\E$\nb-equivalence.  Moreover, we may arrange that~$\eta_k$ decomposes as $\eta_k=\mu_k\oplus \theta_k$, where~$\theta_k$ is a full \Star{}monomorphism that factors through the stable Cuntz algebra $\Cuntza_2\otimes\Comp$.  Extending the~$\eta_k$ by $\Cont(X)$-linearity and taking their direct sum, we get a \(\Cont(X)\)-linear monomorphism $\varphi_n\colon B_n \to A_n$, where $B_n\defeq \bigoplus_{k=1}^{r(n)} \Cont(U_k)\otimes D_k$.  Moreover, each~$\varphi_n$ induces an equivalence in $\Ecat(X)$.  Another application of \cite{Rordam-Stormer:Classification_Entropy}*{Theorem 8.3.3} yields $\Cont(X)$-linear \Star{}monomorphisms $\beta_n^{n+1}\colon B_n\to B_{n+1}$ such that for each~$n$ the diagram
  \[
  \xymatrix{
    A_n\ar[r]^{\alpha_n^{n+1}}& A_{n+1}\\
    B_n\ar[r]_{\beta_n^{n+1}}\ar[u]^{\varphi_n}& B_{n+1}\ar[u]_{\varphi_{n+1}}
  }
  \]
  commutes in $\Ecat(X)$ and hence in the category $\KKcat(X)$ (since each~$D_k$ is nuclear).  The uniqueness part of \cite{Rordam-Stormer:Classification_Entropy}*{Theorem 8.3.3} shows that we may arrange that the diagram above commutes up to unitary homotopy.  By \cite{Dadarlat:Shape_theory}*{Section 2} this gives a \(\Cont(X)\)-linear \Star{}homomorphism $\varphi\colon B \to \Cont_b(T,A)/\Cont_0(T,A)$, where~$B$ is the limit of the inductive system $(B_n,\beta_n^{n+1})$, such that the diagram
  \[
  \xymatrix{
    A_n\ar[r]& A\\
    B_n\ar[r]\ar[u]^{\varphi_n}& B\ar[u]_\varphi
  }
  \]
  commutes in $\Ecat(X)$.  By Proposition~\ref{prop:Milnor_regular}, for any separable \(\Cont(X)\)-algebra~$D$ there is a commutative diagram with exact rows
  \[
  \xymatrix{
    \varprojlim\nolimits^1 \E_1(X;A_i,D)\ \ar@{>->}[r]\ar[d]^{\varphi_n^*}&
    \E(X;A,D) \ar@{>>}[r]\ar[d]^{\varphi^*}&
    \varprojlim \E(X;A_i,D)\ar[d]^{\varphi_n^*}\\
    \varprojlim\nolimits^1 \E_1(X;B_i,D)\ \ar@{>->}[r]&
    \E(X;B,D) \ar@{>>}[r]&
    \varprojlim \E(X;B_i,D).
  }
  \]
  Since the maps~$\varphi_n^*$ are bijective by construction, we conclude that~$A$ is $\E(X)$-equivalent to the nuclear continuous \(\Cont(X)\)-algebra~$B$.
\end{proof}

\begin{theorem}
  \label{the:UMCTgeneral_Lambda_Cantor}
  Let \(A\) and~\(B\) be separable \(\Cont(X)\)-algebras over a totally disconnected compact metrisable space~\(X\).  If~$A$ is in the bootstrap class~$\BootstrapE(X)$, then there is an exact sequence
  \[
  \Ext_{\Cont(X,\Lambda)}\bigl(\Kcoef(A),\Kcoef(SB)\bigr) \into
  \E(X;A,B) \prto \Hom_{\Cont(X,\Lambda)}\bigl(\Kcoef(A),\Kcoef(B)\bigr).
  \]
\end{theorem}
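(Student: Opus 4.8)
The plan is to present the sequence as the Milnor $\varprojlim$--$\varprojlim\nolimits^1$ sequence of Proposition~\ref{prop:Milnor_regular}, after writing~$A$ as an inductive limit of elementary \(\Cont(X)\)-algebras and computing the finite stages algebraically through Lemma~\ref{lemma:UMCT_Lambda_finite_Cantor}. First I would reduce to a convenient model for~$A$. Since \(A\in\BootstrapE(X)\), Proposition~\ref{prop:E_X-trick} replaces~$A$ by an \(\E(X)\)-equivalent separable continuous nuclear \(\Cont(X)\)-algebra; both \(\E(X;A,B)\) and the \(\Cont(X,\Lambda)\)-module \(\Kcoef(A)\) are unchanged, the latter because \(\K\)-theory with coefficients is an \(\E\)-theory invariant and the module structure is determined by the direct summands \(A(U)\) for clopen \(U\subseteq X\), on which an \(\E(X)\)-equivalence restricts to \(\E\)-equivalences. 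Proposition~\ref{pro:bootstrap_fibrewise_td} then puts every fibre of~$A$ in~\(\Bootstrap\), so Proposition~\ref{prop:Write_as_limit_Cantor} makes~$A$ \(\KK(X)\)-equivalent to \(\varinjlim(A_n,\alpha_n)\) with each~$A_n$ elementary. Applying the canonical functor \(\KKcat(X)\to\Ecat(X)\) --~which, being a functor, sends \(\KK(X)\)-equivalences to \(\E(X)\)-equivalences~-- I may henceforth assume \(A=\varinjlim A_n\) with~$A_n$ elementary; since \(\Kcoef\) is likewise a \(\KK(X)\)-invariant, \(\Kcoef(A)=\varinjlim\Kcoef(A_n)\) as \(\Cont(X,\Lambda)\)-modules by continuity of \(\K\)-theory with coefficients.

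Next I would feed this inductive limit into Proposition~\ref{prop:Milnor_regular}, obtaining the natural short exact sequence
\[
\varprojlim\nolimits^1\E_1(X;A_n,B)\into\E(X;A,B)\prto\varprojlim\E(X;A_n,B),
\]
and identify the outer terms. Each~$A_n$ is elementary, hence nuclear and continuous, so Theorem~\ref{the:E_versus_KK_Hausdorff} gives a natural isomorphism \(\E_*(X;A_n,B)\cong\KK_*(X;A_n,B)\), and Lemma~\ref{lemma:UMCT_Lambda_finite_Cantor} identifies the latter, naturally in~$A_n$, with \(\Hom_{\Cont(X,\Lambda)}\bigl(\Kcoef(A_n),\Kcoef(B)\bigr)\); running the same lemma with~$SB$ in place of~$B$ and using \(\E_1(X;A_n,B)\cong\KK(X;A_n,SB)\) gives \(\E_1(X;A_n,B)\cong\Hom_{\Cont(X,\Lambda)}\bigl(\Kcoef(A_n),\Kcoef(SB)\bigr)\). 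Because these identifications are natural in~$A_n$, they turn the projective system \(\bigl(\E(X;A_n,B)\bigr)_n\) into \(\bigl(\Hom_{\Cont(X,\Lambda)}(\Kcoef(A_n),\Kcoef(B))\bigr)_n\), and each \(\Kcoef(A_n)\) is a projective \(\Cont(X,\Lambda)\)-module, as recorded after Proposition~\ref{prop:Write_as_limit_Cantor}.

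It then remains to pass to the limit. Since contravariant \(\Hom\) turns the colimit \(\varinjlim\Kcoef(A_n)=\Kcoef(A)\) into a limit, the right-hand term is \(\varprojlim\Hom_{\Cont(X,\Lambda)}\bigl(\Kcoef(A_n),\Kcoef(B)\bigr)\cong\Hom_{\Cont(X,\Lambda)}\bigl(\Kcoef(A),\Kcoef(B)\bigr)\). For the kernel I would invoke Lemma~\ref{lemma:from_Pext_to_lim_one_Cantor} with the projective resolution \(M_n=\Kcoef(A_n)\) of \(M=\Kcoef(A)\) and \(N=\Kcoef(SB)\), giving \(\varprojlim\nolimits^1\Hom_{\Cont(X,\Lambda)}\bigl(\Kcoef(A_n),\Kcoef(SB)\bigr)\cong\Ext_{\Cont(X,\Lambda)}\bigl(\Kcoef(A),\Kcoef(SB)\bigr)\). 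Substituting both computations into the Milnor sequence yields exactly the asserted exact sequence.

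The genuinely non-formal point is the reduction in the first step: one must know that~$A$ can be realised, up to \(\E(X)\)-equivalence, as an inductive limit of elementary algebras with projective \(\Kcoef\), and that this replacement does not disturb the \(\Cont(X,\Lambda)\)-module \(\Kcoef(A)\). This is where the bootstrap hypothesis and the structural results of Propositions~\ref{prop:E_X-trick}, \ref{pro:bootstrap_fibrewise_td} and~\ref{prop:Write_as_limit_Cantor} are essential, together with the observation that \(\Kcoef\), including its module structure, is an \(\E(X)\)-invariant. Everything after that is bookkeeping: the naturality of the finite-stage identifications guarantees that the algebraic \(\varprojlim\) and \(\varprojlim\nolimits^1\) match those in the Milnor sequence, so no separate verification of the connecting maps is needed.
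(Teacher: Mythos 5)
Your proposal is correct, and its skeleton is exactly the paper's: reduce by Proposition~\ref{prop:E_X-trick} and Proposition~\ref{pro:bootstrap_fibrewise_td} to a continuous nuclear \(\Cont(X)\)-algebra with fibres in the bootstrap class, realise it up to equivalence as \(\varinjlim A_n\) with \(A_n\) elementary by Proposition~\ref{prop:Write_as_limit_Cantor}, apply a Milnor \(\varprojlim\)--\(\varprojlim\nolimits^1\) sequence, and compute the outer terms by Lemma~\ref{lemma:UMCT_Lambda_finite_Cantor} and Lemma~\ref{lemma:from_Pext_to_lim_one_Cantor}. The genuine difference is where you pass between \(\E\) and \(\KK\). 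The paper converts once and globally: it applies Theorem~\ref{the:E_versus_KK_Hausdorff} to get \(\E(X;A,B)\cong\KK(X;A,B)\) and then invokes a \(\varprojlim\nolimits^1\)-sequence for \(\KK(X;\blank,B)\) on nuclear continuous \(\Cont(X)\)-algebras---an ingredient that is only asserted, not proved, in the paper. You instead stay in \(\E\)-theory, use the \(\E\)-theoretic Milnor sequence of Proposition~\ref{prop:Milnor_regular}, which the paper does prove, and convert only the finite stages \(\E(X;A_n,B)\cong\KK(X;A_n,B)\); this makes your argument somewhat more self-contained, at the price of having to check (as you do) that the \(\Cont(X,\Lambda)\)-module \(\Kcoef(A)\) is an \(\E(X)\)-invariant, which the paper leaves implicit. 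One caveat, which in fact applies to both routes: you say ``elementary, hence nuclear'', but the paper's definition of elementary only puts the fibres in the bootstrap class, which does not imply nuclearity. What justifies the step (for you, Theorem~\ref{the:E_versus_KK_Hausdorff} at the finite stages; for the paper, its \(\KK(X)\)-Milnor sequence) is that the elementary algebras produced in the proof of Proposition~\ref{prop:Write_as_limit_Cantor} have Kirchberg-algebra fibres and hence are nuclear; alternatively, for any elementary \(A_n\) one gets \(\E(X;A_n,B)\cong\KK(X;A_n,B)\) without nuclearity from Lemma~\ref{lem:E_compact_Hausdorff} together with the fact that bootstrap-class algebras are \(\KK\)-equivalent (hence \(\E\)-equivalent) to nuclear ones.
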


\begin{proof}
  By Proposition~\ref{prop:E_X-trick} we may assume that~$A$ is a continuous nuclear \(\Cont(X)\)-algebra with all its fibres in the bootstrap class~$\Bootstrap$.  Then \(\E(X;A,B) \cong \KK(X;A,B)\) by Theorem~\ref{the:E_versus_KK_Hausdorff}.  By Proposition~\ref{prop:Write_as_limit_Cantor} we may also assume $A\cong \varinjlim A_n$ for an increasing sequence~$(A_n)_{n=1}^\infty$ of elementary \(\Cst\)\nb-subalgebras of~$A$.  Then we can apply the $\varprojlim\nolimits^1$-sequence for nuclear continuous \(\Cont(X)\)-algebras and $\KK(X;\blank,\blank)$ to obtain the following exact sequence:
  \[
  \varprojlim\nolimits^1\KK_1(X;A_n,B)\into \KK(X;A,B)\prto \varprojlim \KK(X;A_n,B).
  \]
  By Lemma~\ref{lemma:UMCT_Lambda_finite_Cantor}
  \begin{multline*}
    \varprojlim \KK(X;A_n,B)
    \cong \varprojlim \Hom_{\Cont(X,\Lambda)}\bigl(\Kcoef(A_n),\Kcoef(B)\bigr)
    \\\cong \Hom_{\Cont(X,\Lambda)}\bigl(\varinjlim \Kcoef(A_n),\Kcoef(B)\bigr)
    \cong \Hom_{\Cont(X,\Lambda)}\bigl(\Kcoef(A),\Kcoef(B)\bigr).
  \end{multline*}

  Using again Lemma~\ref{lemma:UMCT_Lambda_finite_Cantor} and Lemma~\ref{lemma:from_Pext_to_lim_one_Cantor}, we get
  \begin{align*}
    \varprojlim\nolimits^1\KK_{1}(X;A_{n},B)
    &\cong \varprojlim\nolimits^1 \Hom_{\Cont(X,\Lambda)}\bigl(\Kcoef(A_n),\Kcoef(SB)\bigr)
    \\&\cong \Ext_{\Cont(X,\Lambda)}\bigl(\Kcoef(A),\Kcoef(SB)\bigr).\qedhere
  \end{align*}
\end{proof}

\begin{remark}
  If~$A$ is a separable nuclear continuous \(\Cont(X)\)-algebra with all the fibres in~$\Bootstrap$, then $A\in\BootstrapE(X)$, and Theorem~\ref{the:E_versus_KK_Hausdorff} shows that the exact sequence from Theorem~\ref{the:UMCTgeneral_Lambda_Cantor} holds with $\KK(X;A,B)$ replacing $\E(X;A,B)$.
\end{remark}

For abelian groups $G$ and~$H$, $\PExt_\Z(G,H)$ denotes the subgroup of $\Ext_\Z(G,H)$ generated by pure extensions, that is, extensions $H\into E \prto G$ whose restrictions to all finitely generated subgroups of~$G$ split.  Theorem~\ref{the:UMCTgeneral_Lambda_Cantor} is a generalisation of the main result of~\cite{Dadarlat-Loring:Multicoefficient}, which corresponds to the case when~$X$ reduces to a point.

\begin{proposition}
  \label{from_Ext_to_Pext}
  Let $A$ and~$B$ be separable \(\Cst\)\nb-algebras.  If $A\in\Bootstrap$, there is a natural isomorphism $\Ext_\Lambda\bigl(\Kcoef(A),\Kcoef(B)\bigr)\cong \PExt_\Z\bigl(\K_*(A),\K_*(B)\bigr)$.
\end{proposition}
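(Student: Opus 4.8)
The plan is to realise both sides as derived inverse limits over a finitely generated approximation to~$A$ and to match these. Since $A\in\Bootstrap$, I would first write it, up to $\KK$\nb-equivalence, as an inductive limit $A\simeq\varinjlim A_n$ of separable bootstrap algebras~$A_n$ with finitely generated $\K$\nb-theory; each~$A_n$ may be chosen $\KK$\nb-equivalent to a finite direct sum of the blocks~$\III_p^i$. As $\K$\nb-theory with coefficients commutes with inductive limits, this yields $\Kcoef(A)\cong\varinjlim\Kcoef(A_n)$ as $\Lambda$\nb-modules, and each $\Kcoef(A_n)$ is $\Lambda$\nb-projective by Lemma~\ref{lemma:Cantor_Lambda_proj}.

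Regarding a one-point space as a (trivially) totally disconnected compact metrisable space, so that $\Cont(X,\Lambda)=\Lambda$, Lemma~\ref{lemma:from_Pext_to_lim_one_Cantor} applies to the resolution $\Kcoef(A)\cong\varinjlim\Kcoef(A_n)$ by projective $\Lambda$\nb-modules and gives
\[
\Ext_\Lambda\bigl(\Kcoef(A),\Kcoef(B)\bigr)\cong\varprojlim\nolimits^1\Hom_\Lambda\bigl(\Kcoef(A_n),\Kcoef(B)\bigr).
\]
Because $\K_*(A_n)$ is finitely generated, Proposition~\ref{prop:Cantor_Lambda_fin_gen_K} identifies $\Hom_\Lambda(\Kcoef(A_n),\Kcoef(B))$ naturally with $\KK(A_n,B)$, so the left-hand side of the proposition becomes $\varprojlim\nolimits^1\KK(A_n,B)$.

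On the right-hand side, the groups $\K_*(A_n)$ are finitely generated and their images exhaust the finitely generated subgroups of $\K_*(A)=\varinjlim\K_*(A_n)$, so the classical description of pure extensions gives $\PExt_\Z(\K_*(A),\K_*(B))\cong\varprojlim\nolimits^1\Hom_\Z(\K_*(A_n),\K_*(B))$. To compare the two $\varprojlim\nolimits^1$ groups I would feed the ordinary Universal Coefficient Theorem,
\[
\Ext_\Z\bigl(\K_*(A_n),\K_*(B)\bigr)\into\KK_*(A_n,B)\prto\Hom_\Z\bigl(\K_*(A_n),\K_*(B)\bigr)
\]
(the $\Ext$\nb-term entering with the customary degree shift), into the six-term $\varprojlim$--$\varprojlim\nolimits^1$ sequence of this short exact sequence of inverse systems. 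Everything then hinges on the vanishing $\varprojlim\nolimits^1\Ext_\Z(\K_*(A_n),\K_*(B))=0$, which I view as the main point. It holds because~$\Z$ is hereditary, so $\Ext^2_\Z=0$: applying $\Ext_\Z(\blank,\K_*(B))$ to the telescope extension $\bigoplus_n\K_*(A_n)\xrightarrow{1-\textup{shift}}\bigoplus_n\K_*(A_n)\prto\varinjlim\K_*(A_n)$ shows that $1-\textup{shift}$ is surjective on $\prod_n\Ext_\Z(\K_*(A_n),\K_*(B))$, which is precisely the assertion that this $\varprojlim\nolimits^1$ vanishes.

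With this vanishing the derived sequence collapses to a natural isomorphism $\varprojlim\nolimits^1\KK(A_n,B)\cong\varprojlim\nolimits^1\Hom_\Z(\K_*(A_n),\K_*(B))$, and chaining the identifications yields $\Ext_\Lambda(\Kcoef(A),\Kcoef(B))\cong\PExt_\Z(\K_*(A),\K_*(B))$. All the maps involved are natural in~$A$ and~$B$, and the standard argument shows the composite does not depend on the chosen inductive system, so the isomorphism is natural; the degree shift carried by the $\Ext_\Z$\nb-term is exactly what makes the $\Hom$\nb- and $\PExt$\nb-terms degree preserving, matching the gradings on the two sides.
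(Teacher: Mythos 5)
Your proof is correct, but it follows a genuinely different route from the paper's. The paper works with a single natural map $\eta\colon \Ext_\Lambda\bigl(\Kcoef(A),\Kcoef(B)\bigr)\to\Ext_\Z\bigl(\K_*(A),\K_*(B)\bigr)$, namely restriction of a $\Lambda$\nb-module extension to its $p=0$ summand, and proves directly that it is injective with image exactly $\PExt_\Z$: purity of the image is detected by B\"ockstein operations (an order\nb-$n$ element of $\K_i(A)$ is $\beta_n$ of a class with $\Z/n$\nb-coefficients, whose lift gives an order\nb-$n$ lift); surjectivity onto $\PExt_\Z$ comes from realising a pure extension of groups as $\K_*$ of a $\Cst$\nb-algebra extension via the UCT, plus a Tor diagram chase showing that applying $\Kcoef$ still yields an extension of $\Lambda$\nb-modules; injectivity again uses the UCT (splitness on $\K_*$ forces the class in $\KK_1(A,B)$ to vanish, hence the algebra extension is stably split, hence the $\Lambda$\nb-module extension is trivial). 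You instead compute both sides as derived limits over an approximation of~$A$ by algebras with finitely generated $\K$\nb-theory: the left-hand side becomes $\varprojlim\nolimits^1\KK(A_n,B)$ by Lemma~\ref{lemma:from_Pext_to_lim_one_Cantor} (over a point) together with Lemma~\ref{lemma:Cantor_Lambda_proj} and Proposition~\ref{prop:Cantor_Lambda_fin_gen_K}; the right-hand side becomes $\varprojlim\nolimits^1\Hom_\Z\bigl(\K_*(A_n),\K_*(B)\bigr)$ by Jensen's classical theorem; and the two are matched through the inverse system of UCT sequences once $\varprojlim\nolimits^1\Ext_\Z\bigl(\K_*(A_n),\K_*(B)\bigr)=0$ is known --- your telescope-plus-$\Ext^2_\Z=0$ argument for that vanishing is correct. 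Your route recycles exactly the machinery developed for Theorem~\ref{the:UMCTgeneral_Lambda_Cantor}, which makes the compatibility of the $\Lambda$\nb-UCT with the classical Universal Multicoefficient Theorem transparent; the paper's route requires no choice of inductive system, so naturality is built in, and it isolates the structural reason why only \emph{pure} extensions can occur. Two points you should make explicit to finish: first, since the maps $\K_*(A_n)\to\K_*(A)$ need not be injective, Jensen's identification (usually stated for increasing unions of finitely generated subgroups) must be transferred to your system, e.g.\ using that finitely generated abelian groups are finitely presented, so any two countable systems with colimit $\K_*(A)$ give pro-isomorphic $\Hom$\nb-systems and hence the same $\varprojlim\nolimits^1$; second, naturality in~$A$ requires this same system-independence argument, which you only gesture at, whereas the paper's $\eta$ is natural by construction. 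Neither point is a gap in substance.
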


\begin{proof}
  Consider the natural restriction map
  \[
  \eta\colon \Ext_\Lambda\bigl(\Kcoef(A),\Kcoef(B)\bigr)\to \Ext_\Z\bigl(\K_*(A),\K_*(B)\bigr).
  \]
  Let $\Kcoef(B) \into \underline M \prto \Kcoef(A)$ be an extension of $\Lambda$\nb-modules.  We claim that its $\eta$\nb-image $\K_*(B) \into M_* \prto \K_*(A)$ is pure.  Purity follows if any element~\(x\) in \(\K_i(A)\) of order \(n\in\Primep_{\ge1}\) lifts to an element in~\(M_i\) of the same order.  Since~\(x\) has order~\(n\), there is an element \(y\in \K_{i+1}(A;\Z/n)\) such that $\beta_n(y)=x$, because of the exactness of the sequence
  \[
  \K_{i+1}(A;\Z/n)\xrightarrow{\beta_n} \K_i(A)\xrightarrow{n} \K_i(A),
  \]
  where $\beta_n \in \Lambda$.  Let \(\hat{y}\in M_n^{i+1}\) be a lifting of~$y$.  Then the image \(\hat{x}\defeq \beta_n(\hat{y})\in M_0^i\) of~\(\hat{y}\) is a lifting of~$x$ of order~$n$.  Thus the image of~$\eta$ is contained in $\PExt_\Z\bigl(\K_*(A),\K_*(B)\bigr)$.

  Conversely, if $\K_*(B) \into G_* \prto \K_*(A)$ is a pure extension of $\Z/2$-graded abelian groups, then the UCT provides a separable \(\Cst\)\nb-algebra~$E$ and an extension of \(\Cst\)\nb-algebras $B\otimes K \into E \prto A$ such that $\K_*(B) \into \K_*(E) \prto \K_*(A)$ is isomorphic to the given extension.  We claim that $\Kcoef(B) \into \Kcoef(E) \prto \Kcoef(A)$ is an extension of $\Lambda$\nb-modules.  Purity yields extensions
  \[
  \Tor_q(\K_*(B), \Z/n) \into
  \Tor_q(\K_*(E), \Z/n) \prto
  \Tor_q(\K_*(A), \Z/n)
  \]
  for any $n\in \Primep$ and for \(q=0,1\).  Furthermore, there is a natural extension
  \[
  \Tor_0(\K_*(A),\Z/n) \into \K_*(A;\Z/n) \prto \Tor_1(\K_*(A),\Z/n),
  \]
  and the same for \(E\) and~\(B\).  Now a diagram chase shows that $\K_*(B;\Z/n) \into \K_*(E;\Z/n) \prto \K_*(A;\Z/n)$ is an extension.

  Having identified the image of~$\eta$ as $\PExt_\Z\bigl(\K_*(A),\K_*(B)\bigr)$, it remains to show that~$\eta$ is injective.  We may assume that~$A$ is nuclear.  Suppose that the extension $\K_*(B) \into \K_*(E) \prto \K_*(A)$ splits.  By the UCT, the class of the extension $B\otimes \Comp \into E \prto A$ in $KK_1(A,B)$ is zero.  It follows that the extension $B\otimes \Comp \into E \prto A$ is stably split, so that the extension $\Kcoef(B) \into \Kcoef(E) \prto \Kcoef(A)$ is trivial.
\end{proof}

The following example adapted from~\cite{Dadarlat-Eilers:Bockstein} shows that the map $\E(X;A,B) \to \Hom_{\Cont(X, \Z)}\bigl(\K_*(A),\K_*(B)\bigr)$ is not always surjective.

\begin{example}
  \label{exa:Filtrated_K_insufficient}
  Let \(X=\N\cup \{\infty\}\) be the one-point compactification of~\(\N\).  We shall exhibit two separable continuous $\Cont(X)$-algebras $E_k$ and~$E_{k'}$ with all fibres isomorphic to Kirchberg algebras in the bootstrap category such that $E_k$ and~$E_{k'}$ have isomorphic filtrated $\K$\nb-theory but non-isomorphic filtrated $\K$\nb-theory with coefficients.

  Let~$A$ be a Kirchberg algebra in the bootstrap category with $\K_0(A)=0$ and $\K_1(A)=\Z/n$ for $n\geq 2$.  For $k\in \Z/n$ let \(\varphi_k\colon A \to \Cuntza_\infty\) be a \Star{}homomorphism such that $[\varphi_k]=k \in \KK(A,\Cuntza_\infty) \cong \Z/n$.  Consider the \(\Cont(X)\)-algebra
  \[
  E_k=\{(f,a)\in \Cont(X,\Cuntza_\infty)\oplus A: f(\infty)=\varphi_k(a)\}.
  \]
  We note that \(\K_*(E_k)\cong \K_*(E_{k'})\) as \(\Cont(X,\Z)\)-modules for any \(k,k'\), and we claim that if \(k \Z/n\neq k'\Z/n\), then \(\Kcoef(E_k)\ncong \Kcoef(E_{k'})\) as \(\Cont(X,\Lambda)\)-modules.  Indeed, \(\K_0(E_k)=\K_0(E_{k'})=\Cont_0(X,\Z)\) with \(\Cont(X,\Z)\) acting by pointwise multiplication and \(\K_1(E_k)=\K_1(E_{k'})=\Z/n\) with \(\Cont(X,\Z)\)-module structure \(fm=f(\infty)m\) for \(m\in \Z/n\).  On the other hand,
  \[
  \K_0(E_k;\Z/n)=\{(f,r)\in \Cont(X,\Z/n)\oplus \Z/n: f(\infty)=kr\}.
  \]
  The coefficient map \(\rho\colon \K_0(E_k)\to \K_0(E_k;\Z/n)\) is \(g\mapsto (\dot{g},0)\).  The B\"ockstein map \(\beta\colon \K_0(E_k;\Z/n)\to \K_1(E_k)\) is \(\beta(f,r)=r\).

  Suppose that \(\alpha\colon \Kcoef(E_k)\to \Kcoef(E_{k'})\) is an isomorphism of \(\Cont(X,\Lambda)\)-modules.  Then~\(\alpha\) must act on~$\K_0$ by multiplication by a function $u\colon X \to \{-1,1\}$.  Since~\(\alpha\) is $\Cont(X,\Z)$-linear and commutes with \(\rho\) and~$\beta$, there is a unit $v\in \Z/n$ such that \(\alpha\colon \K_0(E_k;\Z/n)\to \K_0(E_{k'};\Z/n)\) is given by \(\alpha(f,r)=(uf,vr)\).  Choose~$f$ such that $(f,1)\in \K_0(E_k)$.  It follows that for all sufficiently large~$i$ we have $u(i)f(i)=k'v$ and hence $\pm kr=k'v$.  Thus \(k \Z/n= k'\Z/n\).
\end{example}

Next we generalise the previous example, constructing a suitable continuous $\Cont(X)$-algebra over any compact Hausdorff space~\(X\).

\begin{example}
  \label{exa:Filtrated_K_insufficient_general}
  Let~$X$ be an infinite metrisable compact space.  We shall exhibit two unital separable continuous $\Cont(X)$-algebras $F$ and~$F'$ with all fibres isomorphic to Kirchberg algebras in the bootstrap category such that $F$ and~$F'$ have isomorphic filtrated $\K$\nb-theory but non-isomorphic filtrated $\K$\nb-theory with coefficients.

  Using the assumption on~$X$ we find a sequence $(x_i)_{i=1}^\infty$ of distinct elements of~$X$ which converges to some $x_\infty \in X$.  Fix an embedding $\Cuntza_\infty\subset \Cuntza_2$.  For each $k\in \Z/n$ let $A$ and $\varphi_k\colon A \to \Cuntza_\infty$ be as in Example~\ref{exa:Filtrated_K_insufficient}.  Consider the $\Cont(X)$-algebra
  \[
  F_k \defeq \{(f,a)\in \Cont(X,\Cuntza_2)\oplus A \mid
  \text{\(f(x_i)\in \Cuntza_\infty\) for all $i\in \N$, \(f(x_\infty)=\varphi_k(a)\)}\}.
  \]
  Choose $k,k'\in \Z/n$ such that $k \Z/n\neq k' \Z/n$ and set $F=F_k$ and $F'=F_{k'}$.  Then $F$ and~$F'$ have non-isomorphic filtrated $\K$\nb-theory with coefficients since their restrictions to the subspace $Y\defeq\{x_\infty\}\cup \{x_i: i\in \N\}$ are isomorphic to the $\Cont(Y)$-algebras $E_k$ and~$E_{k'}$ from Example~\ref{exa:Filtrated_K_insufficient}, respectively.  At the same time, we have an exact sequence of $\Cont(X)$-algebras \(G \into F_k \prto E_{k}\) with $G=\Cont_0(X\setminus Y, \Cuntza_2)$.  Since $\K_*(\Cuntza_2)=0$, we see that $\K_*\bigl(G(T\setminus Y)\bigr)=0$ for all locally closed subsets~$T$ of~$X$.  It follows that the filtrated $\K$\nb-theory of~$F$ is isomorphic to the filtrated $\K$\nb-theory of~$F'$ since we have seen that $E_k$ and~$E_{k'}$ have this property.
\end{example}

\begin{bibdiv}
  \begin{biblist}
\bib{Bauval:KK(X)-nuc}{article}{
  author={Bauval, Anne},
  title={$RKK(X)$-nucl\'earit\'e (d'apr\`es G.\ Skandalis)},
  language={French, with English and French summaries},
  journal={$K$-Theory},
  volume={13},
  date={1998},
  number={1},
  pages={23--40},
  issn={0920-3036},
  review={\MRref {1610242}{99h:19007}},
}

\bib{Blackadar:K-theory}{book}{
  author={Blackadar, Bruce},
  title={\(K\)\nobreakdash -theory for operator algebras},
  series={Mathematical Sciences Research Institute Publications},
  volume={5},
  edition={2},
  publisher={Cambridge University Press},
  place={Cambridge},
  date={1998},
  pages={xx+300},
  isbn={0-521-63532-2},
  review={\MRref {1656031}{99g:46104}},
}

\bib{Bonkat:Thesis}{thesis}{
  author={Bonkat, Alexander},
  title={Bivariante \(K\)\nobreakdash -Theorie f\"ur Kategorien projektiver Systeme von \(C^*\)\nobreakdash -Al\-ge\-bren},
  date={2002},
  institution={Westf. Wilhelms-Universit\"at M\"unster},
  type={phdthesis},
  language={German},
  note={electronically available at the Deutsche Nationalbibliothek at \url {http://deposit.ddb.de/cgi-bin/dokserv?idn=967387191}},
}

\bib{Connes:NCG}{book}{
  author={Connes, Alain},
  title={Noncommutative geometry},
  publisher={Academic Press Inc.},
  place={San Diego, CA},
  date={1994},
  pages={xiv+661},
  isbn={0-12-185860-X},
  review={\MRref {1303779}{95j:46063}},
}

\bib{Connes-Higson:Deformations}{article}{
  author={Connes, Alain},
  author={Higson, Nigel},
  title={D\'eformations, morphismes asymptotiques et $K$\nobreakdash -th\'eorie bivariante},
  language={French, with English summary},
  journal={C. R. Acad. Sci. Paris S\'er. I Math.},
  volume={311},
  date={1990},
  number={2},
  pages={101--106},
  issn={0764-4442},
  review={\MRref {1065438}{91m:46114}},
}

\bib{Dadarlat:Shape_theory}{article}{
  author={D{\u {a}}d{\u {a}}rlat, Marius},
  title={Shape theory and asymptotic morphisms for $C^*$\nobreakdash -algebras},
  journal={Duke Math. J.},
  volume={73},
  date={1994},
  number={3},
  pages={687--711},
  issn={0012-7094},
  review={\MRref {1262931}{95c:46117}},
}

\bib{Dadarlat:Asymptotic}{article}{
  author={D\u {a}d\u {a}rlat, Marius},
  title={A note on asymptotic homomorphisms},
  journal={$K$\nobreakdash -Theory},
  volume={8},
  date={1994},
  number={5},
  pages={465--482},
  issn={0920-3036},
  review={\MRref {1310288}{95m:46115}},
}

\bib{Dadarlat:Fiberwise_KK}{article}{
  author={D\u {a}d\u {a}rlat, Marius},
  title={Fiberwise $KK$-equivalence of continuous fields of $C^*$\nobreakdash -algebras},
  journal={J. K-Theory},
  volume={3},
  date={2009},
  number={2},
  pages={205--219},
  issn={1865-2433},
  review={\MRref {2496447}{}},
}

\bib{Dadarlat:cont_fields_fd_spaces}{article}{
  author={Dadarlat, Marius},
  title={Continuous fields of C*\nobreakdash -algebras over finite dimensional spaces},
  journal={Adv. Math.},
  note={\arxiv {math.OA/0611405}},
  status={to appear},
}

\bib{Dadarlat-Eilers:Bockstein}{article}{
  author={D\u {a}d\u {a}rlat, Marius},
  author={Eilers, S{\o }ren},
  title={The B\"ockstein map is necessary},
  journal={Canad. Math. Bull.},
  volume={42},
  date={1999},
  number={3},
  pages={274--284},
  issn={0008-4395},
  review={\MRref {1703687}{2000d:46070}},
}

\bib{Dadarlat-Loring:Multicoefficient}{article}{
  author={D\u {a}d\u {a}rlat, Marius},
  author={Loring, Terry A.},
  title={A universal multicoefficient theorem for the Kasparov groups},
  journal={Duke Math. J.},
  volume={84},
  date={1996},
  number={2},
  pages={355--377},
  issn={0012-7094},
  review={\MRref {1404333}{97f:46109}},
}

\bib{Dadarlat-Pasnicu:Continuous_fields}{article}{
  author={D\u {a}d\u {a}rlat, Marius},
  author={Pasnicu, Cornel},
  title={Continuous fields of Kirchberg $C^*$\nobreakdash -algebras},
  journal={J. Funct. Anal.},
  volume={226},
  date={2005},
  number={2},
  pages={429--451},
  issn={0022-1236},
  review={\MRref {2160103}{2006g:46098}},
}

\bib{Dixmier:Cstar-algebres}{book}{
  author={Dixmier, Jacques},
  title={Les $\textup {C}^*$\nobreakdash -alg\`ebres et leurs repr\'esentations},
  language={French},
  series={Deuxi\`eme \'edition. Cahiers Scientifiques, Fasc. XXIX},
  publisher={Gauthier-Villars \'Editeur, Paris},
  date={1969},
  pages={xv+390},
  review={\MRref {0246136}{39\,\#7442}},
}

\bib{Eilers-Restorff:Rordam_gen}{article}{
  author={Eilers, S\o ren},
  author={Restorff, Gunnar},
  title={On R\o rdam's classification of certain $C^*$\nobreakdash -algebras with one non-trivial ideal},
  conference={ title={Operator Algebras: The Abel Symposium 2004}, },
  book={ series={Abel Symp.}, volume={1}, publisher={Springer}, place={Berlin}, },
  date={2006},
  pages={87--96},
  review={\MRref {2265044}{2007m:46098}},
}

\bib{Guentner-Higson-Trout:Equivariant_E}{article}{
  author={Guentner, Erik},
  author={Higson, Nigel},
  author={Trout, Jody},
  title={Equivariant $E$\nobreakdash -theory for $C^*$\nobreakdash -algebras},
  journal={Mem. Amer. Math. Soc.},
  volume={148},
  date={2000},
  number={703},
  pages={viii+86},
  issn={0065-9266},
  review={\MRref {1711324}{2001c:46124}},
}

\bib{Houghton-Larsen-Thomsen:Universal}{article}{
  author={Houghton-Larsen, T. G.},
  author={Thomsen, Klaus},
  title={Universal \textup (co\textup )homology theories},
  journal={$K$\nobreakdash -Theory},
  volume={16},
  date={1999},
  number={1},
  pages={1--27},
  issn={0920-3036},
  review={\MRref {1673935}{2000f:19005}},
}

\bib{Kirchberg:Michael}{article}{
  author={Kirchberg, Eberhard},
  title={Das nicht-kommutative Michael-Auswahlprinzip und die Klassifikation nicht-einfacher Algebren},
  language={German},
  pages={92--141},
  book={ title={\(C^*\)-Algebras (M\"unster, 1999)}, publisher={Springer}, address={Berlin}, year={2000}, },
  review={\MRref {1796912}{2001m:46161}},
}

\bib{MacLane:Categories}{book}{
  author={MacLane, Saunders},
  title={Categories for the working mathematician},
  note={Graduate Texts in Mathematics, Vol. 5},
  publisher={Springer-Verlag},
  place={New York},
  date={1971},
  pages={ix+262},
  review={\MRref {0354798}{50\,\#7275}},
}

\bib{Meyer-Nest:BC}{article}{
  author={Meyer, Ralf},
  author={Nest, Ryszard},
  title={The Baum--Connes conjecture via localisation of categories},
  journal={Topology},
  volume={45},
  date={2006},
  number={2},
  pages={209--259},
  issn={0040-9383},
  review={\MRref {2193334}{2006k:19013}},
}

\bib{Meyer-Nest:Homology_in_KK}{article}{
  author={Meyer, Ralf},
  author={Nest, Ryszard},
  title={Homological algebra in bivariant \(\textup {K}\)\nobreakdash -theory and other triangulated categories. I},
  date={2007},
  status={eprint},
  note={\arxiv {math.KT/0702146}},
}

\bib{Meyer-Nest:Bootstrap}{article}{
  author={Meyer, Ralf},
  author={Nest, Ryszard},
  title={\(C^*\)\nobreakdash -Algebras over topological spaces: the bootstrap class},
  journal={M\"unster Journal of Mathematics},
  volume={2},
  pages={215--252},
  date={2009},
}

\bib{Meyer-Nest:Filtrated_K}{article}{
  author={Meyer, Ralf},
  author={Nest, Ryszard},
  title={\(C^*\)\nobreakdash -Algebras over topological spaces: filtrated \(\textup {K}\)\nobreakdash -theory},
  status={eprint},
  date={2007},
  note={\arxiv {0810.0096}},
}

\bib{Neeman:Triangulated}{book}{
  author={Neeman, Amnon},
  title={Triangulated categories},
  series={Annals of Mathematics Studies},
  volume={148},
  publisher={Princeton University Press},
  place={Princeton, NJ},
  date={2001},
  pages={viii+449},
  isbn={0-691-08685-0},
  isbn={0-691-08686-9},
  review={\MRref {1812507}{2001k:18010}},
}

\bib{Park-Trout:Representable_E}{article}{
  author={Park, Efton},
  author={Trout, Jody},
  title={Representable $E$\nobreakdash -theory for $C_0(X)$-algebras},
  journal={J. Funct. Anal.},
  volume={177},
  date={2000},
  number={1},
  pages={178--202},
  issn={0022-1236},
  review={\MRref {1789948}{2001m:46157}},
}

\bib{Popescu:Equivariant_E}{article}{
  author={Popescu, Radu},
  title={Equivariant \(E\)\nobreakdash -theory for groupoids acting on \(C^*\)\nobreakdash -algebras},
  journal={J. Funct. Anal.},
  volume={209},
  date={2004},
  number={2},
  pages={247--292},
  issn={0022-1236},
  review={\MRref {2044224}{2004m:46158}},
}

\bib{Restorff:Classification}{article}{
  author={Restorff, Gunnar},
  title={Classification of Cuntz--Krieger algebras up to stable isomorphism},
  journal={J. Reine Angew. Math.},
  volume={598},
  year={2006},
  pages={185--210},
  issn={0075-4102},
  review={\MRref {2270572}{2007m:46090}},
}

\bib{Restorff:Thesis}{thesis}{
  author={Restorff, Gunnar},
  title={Classification of Non-Simple $\textup C^*$\nobreakdash -Algebras},
  type={phdthesis},
  institution={K{\o }benhavns Universitet},
  date={2008},
}

\bib{Rordam:Classification_extensions}{article}{
  author={R{\o }rdam, Mikael},
  title={Classification of extensions of certain \(C^*\)\nobreakdash -algebras by their six term exact sequences in \(K\)\nobreakdash -theory},
  journal={Math. Ann.},
  volume={308},
  date={1997},
  number={1},
  pages={93--117},
  issn={0025-5831},
  review={\MRref {1446202}{99b:46108}},
}

\bib{Rordam-Stormer:Classification_Entropy}{collection}{
  author={R{\o }rdam, Mikael},
  author={St{\o }rmer, Erling},
  title={Classification of nuclear $C^*$\nobreakdash -algebras. Entropy in operator algebras},
  series={Encyclopaedia of Mathematical Sciences},
  volume={126},
  note={Operator Algebras and Non-commutative Geometry, 7},
  publisher={Springer-Verlag},
  place={Berlin},
  date={2002},
  pages={x+198},
  isbn={3-540-42305-X},
  review={\MRref {1878881}{2002i:46047}},
}

\bib{Spanier:Quasi-topologies}{article}{
  author={Spanier, Edwin Henry},
  title={Quasi-topologies},
  journal={Duke Math. J.},
  volume={30},
  date={1963},
  pages={1--14},
  issn={0012-7094},
  review={\MRref {0144300}{26\,\#1847}},
}

\bib{Thomsen:Asymptotic_KK}{article}{
  author={Thomsen, Klaus},
  title={Asymptotic homomorphisms and equivariant $KK$-theory},
  journal={J. Funct. Anal.},
  volume={163},
  date={1999},
  number={2},
  pages={324--343},
  issn={0022-1236},
  review={\MRref {1680467}{2000c:19008}},
}

\bib{Vogt:Dual_Milnor}{article}{
  author={Vogt, Rainer M.},
  title={On the dual of a lemma of Milnor},
  conference={ title={Advanced Study Institute on Algebraic Topology}, date={1970}, },
  book={ publisher={Mat. Inst.}, place={Aarhus Univ., Aarhus}, },
  date={1970},
  pages={Vol. III, 632--648.},
  review={\MRref {0339160}{49\,\#3923}},
}
  \end{biblist}
\end{bibdiv}
\end{document}